      \def\@setcopyright{}
      \def\serieslogo@{}
\newcommand{\Complex}{\mathbb C}
\newcommand{\Real}{\mathbb R}
\newcommand{\N}{\mathbb N}
\newcommand{\ddbar}{\overline\partial}
\newcommand{\pr}{\partial}
\newcommand{\ol}{\overline}
\newcommand{\Td}{\widetilde}
\newcommand{\norm}[1]{\left\Vert#1\right\Vert}
\newcommand{\abs}[1]{\left\vert#1\right\vert}
\newcommand{\set}[1]{\left\{#1\right\}}
\newcommand{\To}{\rightarrow}
\theoremstyle{plain}
\newtheorem{thm}{Theorem}[section]
\newtheorem{cor}[thm]{Corollary}
\newtheorem{lem}[thm]{Lemma}
\newtheorem{prop}[thm]{Proposition}
\theoremstyle{definition}
\newtheorem{defn}[thm]{Definition}
\theoremstyle{remark}
\newtheorem{rem}[thm]{Remark}
\numberwithin{equation}{section}
\begin{document}
\title[Szeg\H{o} kernel asymptotic expansion on strongly pseudoconvex CR manifolds with $S^1$ action]
{Szeg\H{o} kernel asymptotic expansion on strongly pseudoconvex CR manifolds with $S^1$ action}
\author[Hendrik Herrmann]{Hendrik Herrmann}
\address{Mathematical Institute, University of Cologne, Weyertal 86-90, 50931 Cologne, Germany}
\thanks{The first named author would like to thank the Institute of Mathematics, Academia Sinica for hospitality, a comfortable accommodation and financial support during his visits in August 2016 and January 2017}
\email{heherrma@math.uni-koeln.de or post@hendrik-herrmann.de}
\author[Chin-Yu Hsiao]{Chin-Yu Hsiao}
\address{Institute of Mathematics, Academia Sinica and National Center for Theoretical Sciences, Astronomy-Mathematics Building, No. 1, Sec. 4, Roosevelt Road, Taipei 10617, Taiwan}
\thanks{The second named author was partially supported by Taiwan Ministry of Science of Technology project 104-2628-M-001-003-MY2 and the Golden-Jade fellowship of Kenda Foundation}
\email{chsiao@math.sinica.edu.tw or chinyu.hsiao@gmail.com}
\author[Xiaoshan Li]{Xiaoshan Li}
\address{School of Mathematics
and Statistics, Wuhan University, Hubei 430072, China}
\thanks{The third  author was  supported by  NSFC No. 11501422 and he also thank the Institute of Mathematics, Academia Sinica for hospitality during his visit in January 2017.}
\email{xiaoshanli@whu.edu.cn}

\begin{abstract}
Let $X$ be a compact connected strongly pseudoconvex CR manifold of dimension $2n+1, n \ge 1$ with a transversal CR $S^1$ action on $X$. We establish an asymptotic expansion for the $m$-th Fourier component of the Szeg\H{o} kernel function as $m\To\infty$, where the expansion involves a contribution in terms of a distance function from lower dimensional strata
of the $S^1$~action. We also obtain explicit formulas for the first three coefficients of the expansion.
\end{abstract}

\maketitle \tableofcontents

\section{Introduction and statement of the main results} \label{s-gue160527}

Let $L$ be a holomorphic line bundle over a complex manifold $M$ and let $L^k$ be the $k$-th tensor power of $L$.
The Bergman kernel is the smooth kernel of the orthogonal projection onto the space of $L^2$-integrable holomorphic sections of $L^k$.
The study of the large $k$ behaviour of the Bergman kernel is an important research subject in complex geometry.
In the case of a positive line bundle $L$ over a compact base manifold $M$, Tian~\cite{T90} obtained results on the asymptotic behavior of the Bergman kernel by using peak section methods.
Catlin~\cite{Cat97} and Zelditch~\cite{Zel98}
established the asymptotic expansion of the Bergman kernel by using a fundamental result by Boutet de Monvel-Sj\"{o}strand~\cite{BouSj76} about the asymptotics of the Szeg\H{o} kernel on a strongly pseudoconvex boundary.
Another proof of the existence of the full asymptotic expansion for the Bergman kernel was obtained by Berman, Berndtsson and Sj\"ostrand~\cite{BBS04}. Dai, Liu and Ma~\cite{DLM04a, DLM06} obtained the full off-diagonal asymptotic expansion and Agmon estimates of the Bergman kernel by using the heat kernel method. Their result holds actually for the more general
Bergman kernel of the spin$^{c}$ Dirac operator associated to a positive line bundle on a
compact symplectic manifold. The Bergman kernel asymptotic expansion plays an important role in many recent research topics, for example, the the existence of canonical K\"ahler metrics , Berezin-Toeplitz quantization, equidistribution of zeros of holomorphic sections and mathematical physics.
We refer the reader to the book \cite{MM07}
for a comprehensive study of the Bergman kernel and its applications and also to the survey \cite{Ma10}.

In CR geometry, among those transversally elliptic operators initiated by Atiyah and Singer, Kohn's $\Box_b$ operator on CR manifolds with $S^1$ action ($\mathbb R$ action, torus action) is a natural one of geometric significance for complex analysts.
Recently, we established Morse inequalities (\cite{HsiaoLi15}, \cite{HL1}), a local index theorem (\cite{CHT}) and Kodaira embedding theorem (\cite{HHL15}, \cite{Hsiao14}, \cite{HLM16}, \cite{HHL17b}) on such manifolds.
As in complex geometry, to study further fundamental CR geometric problems (for example, CR Donaldson's program~\cite{Do:01}, geometric quantization of CR manifolds), it is crucial to be able to know the asymptotic behaviour of $m$-th Fourier component of the Szeg\H{o} kernel function. This is the motivation of this work. In this paper, we establish an asymptotic expansion for $m$-th Fourier component of the Szeg\H{o} kernel function as $m\To+\infty$, where the expansion involves a contribution in terms of a distance function from lower dimensional strata
of the $S^1$ action. We also obtain explicit formulas for the first three coefficients of the expansion.
It should be mentioned that in~\cite{HLM16} ($S^1$-action case) and ~\cite{HHL17b} ($\Real$-action case), we consider a positive CR line bundle $L$ over a compact not necessary strongly pseudoconvex CR manifold $X$ and we sum over the Fourier component of the Szeg\H{o} kernel function until some $k\geq1$  with values in $L^k$ and we establish an asymptotic expansion in $k$ for this object.  In this work, we replace  the existence of a positive CR line bundle by the strongly pseudoconvexity condition and we can establish an asymptotic expansion for each Fourier component of the Szeg\H{o} kernel function.

We now formulate the main results. We refer to Section~\ref{s:prelim} for some notations and terminology used here.
Let $(X, T^{1,0}X)$ be a compact connected strongly pseudoconvex CR manifold with a transversal CR locally free $S^1$ action $e^{i\theta}$ (see Definition~\ref{d-gue160502}), where $T^{1,0}X$ is a CR structure of $X$.
Let $T\in C^\infty(X,TX)$ be the real vector field induced by the $S^1$ action and let $\omega_0\in C^\infty(X,T^*X)$ be the global real one form determined by
\begin{equation}\label{e-gue160502b}
\langle\,\omega_0\,,\,T\,\rangle=-1,\ \ \langle\,\omega_0\,,\,u\,\rangle=0,\ \ \forall u\in T^{1,0}X\oplus T^{0,1}X.
\end{equation}
Take a rigid Hermitian metric $\langle\,\cdot\,|\,\cdot\,\rangle$ on $\Complex TX$ such that $T^{1,0}X\perp T^{0,1}X$, $T\perp (T^{1,0}X\oplus T^{0,1}X)$, $\langle\,T\,|\,T\,\rangle=1$ holds (see Definition~\ref{d-gue150514f} and the discussion after Definition~\ref{d-gue150514f}) and let $dv_X=dv_X(x)$ be the volume form on $X$ induced by the rigid Hermitian metric $\langle\,\cdot\,|\,\cdot\,\rangle$ on $\Complex TX$. Then we get a natural global $L^2$ inner product $(\,\cdot\,|\,\cdot\,)$ on $C^\infty(X)$. We denote by $L^{2}(X)$ the completion of $C^\infty(X)$ with respect to $(\,\cdot\,|\,\cdot\,)$.

Let $\ddbar_b:\Omega^{0,q}(X)\To\Omega^{0,q+1}(X)$ be the tangential Cauchy-Riemann operator, $q=0,1,2,\ldots,n$. For every $u\in\Omega^{0,q}(X)$, we can define $Tu:=\mathscr L_Tu\in\Omega^{0,q}(X)$ (see \eqref{e-gue150508faII} and \eqref{lI}), where $\mathscr L_Tu$ denotes the Lie derivative
of $u$ along the direction $T$.
We can check that $T\ddbar_b=\ddbar_bT$ on $\Omega^{0,q}(X)$, $q=0,1,2,\ldots,n$. For $m\in\mathbb Z$, put
\[C^\infty_m(X):=\set{u\in C^\infty(X);\, Tu=imu}.\]
For each $m\in\mathbb Z$, set
\begin{equation}\label{e-gue160604}
H^0_{b,m}(X):=\set{u\in C^\infty_m(X);\, \ddbar_bu=0}.
\end{equation}
It is well-known that (see Theorem 3.5 in~\cite{CHT}) ${\rm dim\,}H^0_{b,m}(X)<\infty$. Let $f^m_1\in H^0_{b,m}(X),\ldots,f^m_{d_m}\in H^0_{b,m}(X)$ be an orthonormal basis for $H^0_{b,m}(X)$ with respect to $(\,\cdot\,|\,\cdot\,)$. The $m$-th Fourier component of the Szeg\H{o} kernel function is given by
\begin{equation}\label{e-gue160605}
S_m(x):=\sum^{d_m}_{j=1}\abs{f^m_j(x)}^2\in C^\infty(X).
\end{equation}

For $x\in X$, we say that the period of $x$ is $\frac{2\pi}{q}$, $q\in\mathbb N$, if $e^{i\theta}\circ x\neq x$ for every $0<\theta<\frac{2\pi}{q}$ and $e^{i\frac{2\pi}{q}}\circ x=x$. For each $q\in\mathbb N$, put
\begin{equation}\label{e-gue150802bm}
X_q=\set{x\in X;\, \mbox{the period of $x$ is $\frac{2\pi}{q}$}}
\end{equation}
and set $p=\min\set{q\in\mathbb N;\, X_q\neq\emptyset}$. It is well-known that if $X$ is connected, then $X_p$ is an open and dense subset of $X$ (see Duistermaat-Heckman~\cite{Du82}). We denote $X_{{\rm reg\,}}:=X_{p}$. We call $x\in X_{{\rm reg\,}}$ a regular point of the $S^1$ action. Let $X_{{\rm sing\,}}$ be the complement of $X_{{\rm reg\,}}$. Assume $X=X_{p_1}\bigcup X_{p_2}\bigcup\cdots\bigcup X_{p_t}$, $p=:p_1<p_2<\cdots<p_t$. Put $X_{{\rm sing\,}}=X^1_{{\rm sing\,}}:=\bigcup^t_{j=2}X_{p_j}$, $X^{r}_{{\rm sing\,}}:=\bigcup^t_{j=r+1}X_{p_j}$, $t-1\geq r\geq 2$. Set $X^{t}_{{\rm sing\,}}:=\emptyset$. For $x, y\in X$, let $d(x,y)$ denotes the standard Riemannian distance of $x$ and $y$ with respect to the given Hermitian metric. Let $A$ be any subset of $X$. For $x\in X$, put $d(x,A):=\inf\set{d(x,y);\, y\in A}$ if $A\neq\emptyset$, $d(x,A)=0$ if $A=\emptyset$. The main result of this work is the following

\begin{thm}\label{t-gue160605}
Let $X$ be a compact connected strongly pseudoconvex CR manifold of dimension $2n+1$, $n\geq1$, with a transversal CR $S^1$ action $e^{i\theta}$. With the notations used above, there are $b_j(x)\in C^\infty(X)$, $j=0, 1,2,\ldots$,  and $\varepsilon_0>0$ such that for any $r=1,\ldots,t$, any differential operator $P_\ell:C^\infty(X)\To C^\infty(X)$
of order $\ell\in\mathbb N_0$ and every $N\in\mathbb N$,  there is a $C_{N,P_\ell}>0$ such that
\begin{equation}\label{e-gue160605a}
\begin{split}
\abs{P_\ell\Bigr(S_m(x)-\sum\limits^{p_r}_{s=1}e^{\frac{2\pi(s-1)}{p_r}mi}\sum^{N-1}_{j=0}m^{n-j}b_{j}(x)\Bigr)}&\\
\leq C_{N,P_\ell}\Bigr(m^{n-N}+m^{n+\frac{\ell}{2}}&e^{-m\varepsilon_0d(x,X^{r}_{{\rm sing\,}})^2}\Bigr),\ \ \forall m\geq 1,\  \ \forall x\in X_{p_r}.
\end{split}
\end{equation}
\end{thm}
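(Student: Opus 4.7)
\emph{Proof plan.} The approach is based on the Boutet de Monvel--Sj\"ostrand description of the Szeg\H{o} projector on a strongly pseudoconvex CR manifold: modulo a smoothing operator,
\begin{equation*}
S(x,y)\equiv \int_0^{\infty} e^{it\psi(x,y)}\,s(x,y,t)\,dt,
\end{equation*}
where $\psi$ is a complex phase with $\operatorname{Im}\psi\geq 0$, $\psi(x,x)=0$, $\operatorname{Im}\psi(x,y)\asymp d(x,y)^2$ off the diagonal, $d_x\psi(x,x)\propto\omega_0(x)$, and $s(x,y,t)\sim\sum_{j\geq 0}t^{n-j}s_j(x,y)$ is a classical symbol. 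Since the $S^1$-action is transversal CR and preserves the rigid metric, $S$ commutes with the action, so the $m$-th Fourier component on the diagonal can be written as
\begin{equation*}
S_m(x)=\frac{1}{2\pi}\int_0^{2\pi} e^{-im\theta}\,S(e^{i\theta}\circ x,\,x)\,d\theta.
\end{equation*}
Substituting the BdMS form and rescaling $t=m\sigma$ converts $S_m(x)$ into an oscillatory integral with large parameter $m$, complex phase $\Phi(\theta,\sigma;x)=\sigma\psi(e^{i\theta}\circ x,x)-\theta$, and amplitude of order $m^{n+1}$ times a classical symbol.

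Next I would apply the complex stationary phase method of Melin--Sj\"ostrand in $(\theta,\sigma)$. The real critical set, where $\operatorname{Im}\Phi=0$ and $d\Phi=0$, is characterised by $e^{i\theta}\circ x=x$ together with an algebraic condition fixing $\sigma$ via $\langle\omega_0,T\rangle=-1$ and the BdMS normalisation of $\psi$. For $x\in X_{p_r}$ this is exactly the set of $p_r$ non-degenerate points $\theta_s=2\pi(s-1)/p_r$, $s=1,\ldots,p_r$; non-degeneracy of the complex Hessian is a direct consequence of transversality of $T$ to $T^{1,0}X\oplus T^{0,1}X$. The Melin--Sj\"ostrand formula at each critical point then produces a contribution of the form $e^{\frac{2\pi(s-1)}{p_r}mi}\sum_{j\geq 0}m^{n-j}b_j(x)$: the exponential factor is the value of $e^{im\Phi}$ at the critical point (up to orientation conventions), and the coefficient series is independent of $s$ because the $S^1$-action identifies the local stationary-phase data (Hessian, amplitude, Jacobian) at $(\theta_s,1)$ with those at $(\theta_1,1)$. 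Summing over $s$ yields the main term on the left of \eqref{e-gue160605a}. The explicit formulas for $b_0,b_1,b_2$ follow by unpacking the first three terms of the expansion in Baouendi--Rothschild--Treves coordinates, in which $\psi$ and the principal symbols $s_j$ admit known normal forms.

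For the error bound one splits the $\theta$-integral with a partition of unity separating small neighborhoods of all ``would-be'' critical angles $2\pi(s-1)/p_j$, $j=1,\ldots,t$. Near the genuine critical angles ($j=r$), the Melin--Sj\"ostrand remainder at order $N$ gives the polynomial error $m^{n-N}$, uniform on compact subsets of $X_{p_r}$. Near a would-be critical angle associated to some $p_j>p_r$, the point $y=e^{i\theta}\circ x$ satisfies $d(x,y)\geq c\,d(x,X^r_{\rm sing})$ by the stratification, so the BdMS positivity $\operatorname{Im}\psi(y,x)\gtrsim d(x,y)^2$ transfers to $\operatorname{Im}\Phi\gtrsim d(x,X^r_{\rm sing})^2$ after a stationary-phase reduction in $\sigma$, producing the Gaussian suppression $e^{-m\varepsilon_0 d(x,X^r_{\rm sing})^2}$; a differential operator $P_\ell$ of order $\ell$ contributes at most a factor $m^{\ell/2}$, coming from the $m^{-1/2}$ concentration scale of the stationary-phase localisation in $\theta$. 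On the non-stationary complement, iterated integration-by-parts in $\theta$ absorbs any remaining contribution into $O(m^{-\infty})$. The principal technical obstacle is carrying out this analysis uniformly as $x$ approaches $X^r_{\rm sing}$, where several critical points of $\Phi$ coalesce: one must verify that the Melin--Sj\"ostrand quantitative remainders and the Gaussian estimates remain valid as the critical points collide, which requires careful quantitative control of $\psi$ and its Hessian along the $S^1$-orbit through $x$, as well as a partition of unity adapted simultaneously to all the orbit periods in the stratification.
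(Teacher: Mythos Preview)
Your approach via the Boutet de Monvel--Sj\"ostrand parametrix is \emph{not} the route taken in the paper, and the gap you yourself flag at the end --- uniformity of the stationary phase expansion as $x$ approaches $X^r_{\rm sing}$ and critical points coalesce --- is not a technicality but the crux of the matter. The authors explicitly remark (Remark after Remark~\ref{Rem1}) that in earlier work they used exactly your BdMS+stationary phase strategy and obtained a pointwise expansion of $S_m(x_0,x_0)$ at a fixed $x_0$, but ``could not say anything about the asymptotic behavior of $S_m(x,x)$ and its derivatives in $x$ when $m$ becomes large and $x$ goes to $x_0$.'' Your proposal does not supply the missing uniform Melin--Sj\"ostrand estimate for merging critical points, and without it the bound \eqref{e-gue160605a} does not follow.

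The paper circumvents this by a different construction that avoids stationary phase altogether. Instead of starting from the global Szeg\H{o} projector, the authors work on each BRT chart $D_j\simeq U_j\times(-\varepsilon,\varepsilon)$ and invoke a local \emph{Bergman} kernel result of Hsiao--Marinescu (Theorem~\ref{t-gue160901}): there is a properly supported operator $P_{B_j,m}$ with $P_{B_j,m}\circ(e^{-m\varphi_j}\Box_{B_j,m}e^{m\varphi_j})=O(m^{-\infty})$, reproducing local holomorphic sections, whose kernel already has the form $e^{im\Psi_{B_j}}b_{B_j}(z,w,m)$ with $\operatorname{Im}\Psi_{B_j}\gtrsim|z-w|^2$. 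These are glued via a partition of unity and the $S^1$-averaging $Q_m$ to form $\Gamma_m=\sum_jH_{j,m}\circ Q_m$. The key functional-analytic step is Theorem~\ref{t-gue160904fI}: using the spectral gap for $\Box_{b,m}$ (Theorem~\ref{t-gue160822a}) and Sobolev bounds on the partial inverse $N_m$, one shows $\Gamma_m=S_m+O(m^{-\infty})$. The diagonal $\Gamma_m(x,x)$ is then a \emph{finite} integral $\int_0^{2\pi}e^{im\hat\Psi_{B_j}(x,e^{iu}x)}\hat b_{B_j,m}(x,e^{iu}x)e^{imu}\,du$ which is analyzed not by stationary phase but by a direct splitting of $[0,2\pi]$ into the intervals $[2\pi(s-1)/p_r-2\delta,2\pi(s-1)/p_r+2\delta]$ (each contributing $e^{2\pi(s-1)mi/p_r}\Gamma^0_m(x,x)$) and the complement, on which the elementary lower bound $\operatorname{Im}\Psi_{B_j}\geq c\,\hat d(x,X^r_{\rm sing})^2$ (Theorem~\ref{t-gue160910c} and the claim \eqref{e-gue160327I}) gives the Gaussian factor directly. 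No critical-point analysis enters, so uniformity in $x$ is automatic.

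In short: your plan is a reasonable heuristic, but the step you defer is precisely the one the paper was written to bypass. If you want to pursue the BdMS route to a complete proof, you would need a genuine uniform stationary phase lemma for a family of phases with colliding nondegenerate critical points --- a nontrivial result not supplied here.
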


\begin{rem}\label{r-gue160605}
When $m$ is a multiple of $p_r$, the number $\sum\limits^{p_r}_{s=1}e^{\frac{2\pi(s-1)}{p_r}mi}$ is equal to $p_r$. When $m$ is not a multiple of $p_r$, the number $\sum\limits^{p_r}_{s=1}e^{\frac{2\pi(s-1)}{p_r}mi}$ is equal to $0$.
\end{rem}
\begin{cor}\label{c-gue160605I}
With the notations and assumptions used in Theorem~\ref{t-gue160605}, assume that $p_1=1$. For every $N\in\mathbb N$,  there are $\varepsilon_0>0$ and $C_N>0$ such that
\begin{equation}\label{e-gue160605ab}
\abs{S_m(x)-\sum^{N-1}_{j=0}m^{n-j}b_{j}(x)}\leq C_N\Bigr(m^{n-N}+m^{n}e^{-m\varepsilon_0d(x,X_{{\rm sing\,}})^2}\Bigr),\ \ \forall m\geq1,\  \ \forall x\in X_{{\rm reg\,}}.
\end{equation}
\end{cor}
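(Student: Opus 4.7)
The plan is very short, because the corollary is a direct specialization of Theorem~\ref{t-gue160605}. I would apply that theorem with the parameters $r=1$ and $\ell = 0$ (so $P_0 = \mathrm{id}$), and then read off the conclusion using the definitions introduced immediately before the theorem.

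With $p_1 = 1$ by assumption, the stratum $X_{p_1}$ coincides with $X_{\mathrm{reg}}$, and $X_{\mathrm{sing}}^{1} = X_{\mathrm{sing}}$ by the way the superscript filtration was defined (with $X_{\mathrm{sing}}^{1} := X_{\mathrm{sing}}$). The exponential sum in \eqref{e-gue160605a} becomes
\[
\sum_{s=1}^{p_1} e^{\frac{2\pi(s-1)}{p_1}mi} \;=\; \sum_{s=1}^{1} e^{2\pi(s-1)mi} \;=\; 1,
\]
so the principal term in the expansion reduces to $\sum_{j=0}^{N-1} m^{n-j} b_j(x)$, matching the right-hand side of \eqref{e-gue160605ab}. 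Choosing $P_\ell = \mathrm{id}$ gives $\ell = 0$, which makes the factor $m^{n+\ell/2}$ in the error estimate collapse to $m^n$, again matching \eqref{e-gue160605ab}. With these substitutions, the constant $C_{N,P_0}$ from Theorem~\ref{t-gue160605} supplies the desired $C_N$, and the same $\varepsilon_0$ works.

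There is no genuine obstacle here: the only thing to verify is that the indexing conventions line up, i.e.\ that $X_{\mathrm{sing}}^{1} = X_{\mathrm{sing}}$ and $X_{p_1} = X_{\mathrm{reg}}$ under the hypothesis $p_1 = 1$. Both are immediate from the definitions given just before the statement of Theorem~\ref{t-gue160605}. Hence the corollary follows by specializing the main theorem; no additional analytic input is required.
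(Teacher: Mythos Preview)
Your proposal is correct and matches the paper's approach: the corollary is stated immediately after Theorem~\ref{t-gue160605} with no separate proof, as it is a direct specialization with $r=1$, $P_\ell=\mathrm{id}$, and $p_1=1$. One minor remark: the identification $X_{p_1}=X_{\mathrm{reg}}$ holds by definition for any $p_1$ (since $p:=p_1$ and $X_{\mathrm{reg}}:=X_p$); the hypothesis $p_1=1$ is only needed to reduce the exponential sum $\sum_{s=1}^{p_1}e^{2\pi(s-1)mi/p_1}$ to $1$ for all $m$.
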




\begin{rem}[Relations to Bergman kernels on orbifolds]\label{Rem1}
	It is well-known that a compact connected strongly pseudoconvex CR manifold \(X\) with a transversal CR $S^1$ action can be identified with the circle bundle of a positive holomorphic orbifold line bundle \((L,h)\rightarrow M\) over a compact complex orbifold \(M\) such that a rigid Hermitian metric on \(X\) induces a Hermitian metric on \(M\) (see \cite{OV07}, \cite{BG00}, \cite{Mo88}). By this identification the $m$-th Fourier component of the Szeg\H{o} kernel function and the Bergman kernel function for the \(m\)-th tensor power of \(L\) are equal up to a factor of \(2\pi\). Hence the expansion result for Bergman kernels on orbifolds by Ma--Marinescu (\cite{MM07}, Theorems~5.4.10, 5.4.11, or Dai--Liu--Ma  \cite{DLM06}, Theorem 1.4 and (5.25) for a more general version) implies an expansion for the Fourier components of the Szeg\H{o} kernel on \(X\). However, comparing the expansion coming from their result with the expansion given in Theorem~\ref{t-gue160605}, some differences appear. For example, 
	the error term in Theorem~\ref{t-gue160605} decreases faster than the error term in \cite{MM07} when \(m\rm{d}(x,X_{\rm{sing}})^2\) goes to infinity.
	
	It should be mentioned that the results by Dai--Liu--Ma \cite{DLM04a, DLM06, DLM12} and Ma--Marinescu \cite{MM07} work for general complex orbifolds while the setting considered in this work just implies results for orbifolds with cyclic quotient singularities.
	However, that kind of setting itself is interesting. Bergman kernel expansion in that specific case  was also considered by Ross--Thomas (\cite{RT11}, Theorem~1.7). They studied an object called weighted Bergman kernel which is a finite weighted sum of Bergman kernels for different tensor powers of the orbifold line bundle \(L\). They proved that this object admits a global asymptotic expansion on \(M\) in some \(C^r\)-norm up to some order \(N\) where \(r,N\in\N\) depend on the choice of weights. It was shown by Dai--Liu--Ma \cite{DLM12} that this result can be directly deduced from their work \cite{DLM06}. 
	
\end{rem}

\begin{rem}
In \cite{HHL15} we proved an embedding result for strongly pseudoconvex CR manifolds with transversal \(S^1\)-action using the asymptotics of Szeg\H{o} kernels for positive Fourier components. For this reason we also studied the asymptotic behavior of the Szeg\H{o} kernel \(S_m\) when \(m\) becomes large.  The main difference with respect to the expansion on the irregular part of \(X\) compared to this work is the following: 
In \cite{HHL15} we fixed a point \(x_0\in X_k\), \(k>1\), then by the classical result of Boutet de Monvel-Sj\"{o}strand~\cite{BouSj76}, we have
\begin{equation}\label{e-gue180510}
S_m(x_0,x_0)=\int^{\pi}_{-\pi}\int^\infty_0e^{i\varphi(x_0,e^{i\theta}x_0)}a(x_0,e^{i\theta}x_0,t)dtd\theta,
\end{equation}
where $\varphi$ is a complex phase function and $a(x,y,t)$ is a
classical symbol of type $(1,0)$ and order $n$. 	 By using \eqref{e-gue180510}, we described the behavior of \(S_m(x,x_0)\) for \(x\) in an open neighborhood of \(x_0\) in \(X\) when \(m\) goes to infinity. As a consequence we obtained a (point-wise)  expansion for \(S_m(x_0,x_0)\). But with this method, we could not say anything about the asymptotic behavior of \(S_m(x,x)\) and its derivatives in \(x\) when \(m\) becomes large and \(x\) goes to \(x_0\).
In this work, we introduce some kind of gluing technique and we formulate and prove a result which covers that case (see Theorem~\ref{t-gue160605}) and in addition calculate the first three coefficients in the asymptotic expansion of \(S_m(x,x)\) (see Theorem~\ref{t-gue160605I}). Furthermore, the results in both works should be treated independently from each other since the methods of proof are  different. In \cite{HHL15} the starting point is the famous result on general Szeg\H{o} kernels for strongly pseudoconvex CR manifolds by Boutet de Monvel and Sj\"orstrand \cite{BouSj76}, while in this work we use a deep result on Bergman kernel expansion by Hsiao--Marinescu \cite{HM12} to obtain our results.
\end{rem}

We introduce now the geometric objects used in Theorem~\ref{t-gue160605I} below. The two  form $\frac{1}{2\pi}id\omega_0$ induces a rigid Hermitian metric $\langle\,\cdot\,|\,\cdot\,\rangle_{\mathcal{L}}$ on $\Complex TX$ (see \eqref{e-gue160529}). The Hermitian metric  $\langle\,\cdot\,|\,\cdot\,\rangle_{\mathcal{L}}$ on $\Complex TX$ induces a Hermitian metric on $\oplus_{r=1}^{2n+1}\Lambda^r(\Complex T^*X)$, also denoted by $\langle\,\cdot\,|\,\cdot\,\rangle_{\mathcal{L}}$.
For $u\in\Lambda^r(\Complex T^*X)$, we denote $\abs{u}^2_{\mathcal{L}}:=\langle\,u\,|\,u\,\rangle_{\mathcal{L}}$.  Let $x=(z,\theta)$ be canonical coordinates on an open set $D\subset X$ (see Theorem~\ref{t-gue150514}).
Let $Z_1\in C^\infty(D,T^{1,0}X),\ldots,Z_n\in C^\infty(D,T^{1,0}X)$ be as in \eqref{e-can} and let $e_1\in C^\infty(D,T^{*1,0}X),\ldots,e_n\in C^\infty(D,T^{*1,0}X)$ be the dual frames. The CR rigid Laplacian with respect to $\langle\,\cdot\,|\,\cdot\,\rangle_{\mathcal{L}}$ is given by
\begin{equation} \label{s1-e9m}
\triangle_{\mathcal{L}}=(-2)\sum^n_{j,k=1}\langle\,e_j\,|\,e_k\,\rangle_{\mathcal{L}}Z_j\ol{Z_k}.
\end{equation}
It is easy to check that $\triangle_{\mathcal{L}}$ is globally defined.
Let
\begin{equation}\label{e-gue160531m}
\frac{1}{n!}\Bigr((-\frac{1}{2\pi}d\omega_0)^n\wedge(-\omega_0)\Bigr)(x)=a(x)dx_1\cdots dx_{2n+1}\ \ \mbox{on $D$},
\end{equation}
where $a(x)\in C^\infty(D)$.
The rigid scalar curvature $S_{\mathcal{L}}$ is given by
\begin{equation}\label{e-gue160531Im}
S_{\mathcal{L}}(x):=\triangle_{\mathcal{L}}(\log a(x)).
\end{equation}
It is easy to see that $S_{\mathcal{L}}(x)$ is well-defined and  $S_{\mathcal{L}}(x)\in C^\infty(X)$  (see the discussion after \eqref{e-gue160531I}). We will show in Theorem~\ref{t-gue161012r} that $S_{\mathcal{L}}(x)=4\pi R$,
where $R$ denotes the Tanaka-Webster scalar curvature with respect to the pseudohermitian structure $-\omega_0$.

Let $\omega^1(x),\ldots,\omega^n(x)\in C^\infty(X,T^{*1,0}X)$ be an orthonormal basis for $T^{*1,0}_xX$ with respect to the given rigid Hermitian metrci $\langle\,\cdot\,|\,\cdot\,\rangle$, for every $x\in X$. Define $\Theta(x):=i\sum^n_{j=1}\omega^j(x)\wedge\ol{\omega^j}(x)\in C^\infty(X,T^{*1,1}X)$. Let $x=(z,\theta)$ be canonical coordinates on an open set $D\subset X$. Let
\begin{equation}\label{e-gue160601m}
\frac{1}{n!}\Theta^n \wedge(-\omega_0)=b(x)dx_1dx_2\cdots dx_{2n+1}\ \ \mbox{on $D$},
\end{equation}
where $b(x)\in C^\infty(D)$. Put
\begin{equation}\label{e-gue160601Im}
S^\Theta_{\mathcal{L}}(x):=\triangle_{\mathcal{L}}(\log b(x)),
\end{equation}
\begin{equation} \label{e-gue160601IIm}
\mathcal{R}^{\det}_\Theta(x)=\pr_b\ddbar_b\log b(x).
\end{equation}
We can check that $S^\Theta_{\mathcal{L}}(x)$ and $\mathcal{R}^{\rm det\,}_\Theta(x)$ are well-defined and $S^\Theta_{\mathcal{L}}(x)\in C^\infty(X)$, $TS^\Theta_{\mathcal{L}}(x)=0$,  $\mathcal{R}^{\det}_\Theta(x)\in C^\infty(X,T^{*1,1}X)$. We call $\mathcal{R}^{\det}_\Theta$ the rigid curvature of the determinant line bundle of $T^{*1,0}X$ with respect to the real two form $\Theta$. Note that $\langle\,\cdot\,|\,\cdot\,\rangle=\langle\,\cdot\,|\,\cdot\,\rangle_{\mathcal{L}}$ implies
$S^\Theta_{\mathcal{L}}(x)=S_{\mathcal{L}}(x)$.

Let $R^{T^{1,0}X}_{\mathcal{L}}\in C^\infty(X,T^{*1,1}X\otimes{\rm End\,}(T^{1,0}X))$ be the rigid Chern curvature with respect to $\langle\,\cdot\,|\,\cdot\,\rangle_{\mathcal{L}}$ (see \eqref{s1-e13}). Set
\begin{equation} \label{s1-e14m}
\abs{R^{T^{1,0}X}_{\mathcal{L}}}^2_{\mathcal{L}}:=\sum^n_{j,k,s,t=1}\abs{\langle\,R^{T^{1,0}X}_{\mathcal{L}}(\ol e_j,e_k)e_s\,|\,e_t\,\rangle_{\mathcal{L}}}^2,
\end{equation}
where $e_1,\ldots,e_n$ is an orthonormal frame for $T^{1,0}X$ with respect to $\langle\,\cdot\,|\,\cdot\,\rangle_{\mathcal{L}}$.
The rigid Ricci curvature with respect to $\langle\,\cdot\,|\,\cdot\,\rangle_{\mathcal{L}}$ is a global $(1,1)$ form on $X$ given by
\begin{equation}\label{e-gue160602am}
\langle\,{\rm Ric\,}_{\mathcal{L}}\,, \ol U\wedge V\,\rangle=-\sum^n_{j=1}\langle\,R^{T^{1,0}X}_{\mathcal{L}}(\ol U,e_j)V\,|\, e_j\,\rangle_{\mathcal{L}},\ \
U, V\in T^{1,0}X,
\end{equation}
where $e_1,\ldots,e_n$ is an orthonormal frame for $T^{1,0}X$ with respect to $\langle\,\cdot\,|\,\cdot\,\rangle_{\mathcal{L}}$.

We denote by $\dot{\mathcal{R}}=\dot{\mathcal{R}}(x)$ the Hermitian matrix $\dot{\mathcal{R}}(x)\in \operatorname{End}(T^{1,0}_xX)$ such that for $V, W \in T^{1,0}_xX$ we have
\begin{equation}\label{E:1.5.15m}
i d\omega_0(x)(V, \overline{W}) = \langle\,\dot{\mathcal{R}}(x)V\,|\, W \,\rangle.
\end{equation}
Now, we can state our result

\begin{thm}\label{t-gue160605I}
 With the notations used above, for $b_{0}(x), b_{1}(x), b_{2}(x)$ in Theorem~\ref{t-gue160605}, we have
\begin{equation} \label{e-gue160822}
b_{0}(x)=(2\pi)^{-n-1}\det\dot{\mathcal{R}}(x),\ \ \forall x\in X,
\end{equation}
\begin{equation} \label{e-gue160822I}
b_{1}(x)=(2\pi)^{-n-1}\det\dot{\mathcal{R}}(x)\Bigr(\frac{1}{4\pi}S^\Theta_{\mathcal{L}}-\frac{1}{8\pi}S_{\mathcal{L}}\Bigr)(x),\ \ \forall x\in X,
\end{equation}
\begin{equation} \label{e-gue160822II}
\begin{split}
b_{2}(x)&=(2\pi)^{-n-1}\det\dot{\mathcal{R}}(x)\Bigr(\frac{1}{128\pi^2}(S_{\mathcal{L}})^2-\frac{1}{32\pi^2}S_{\mathcal{L}}S^\Theta_{\mathcal{L}}+\frac{1}{32\pi^2}(S^\Theta_{\mathcal{L}})^2
-\frac{1}{32\pi^2}\triangle_{\mathcal{L}}S^\Theta_{\mathcal{L}}
-\frac{1}{8\pi^2}\abs{R^{\det}_\Theta}^2_{\mathcal{L}}\\
&\quad+\frac{1}{8\pi^2}\langle\,{\rm Ric\,}_{\mathcal{L}}\,|\,R^{\det}_\Theta\,\rangle_{\mathcal{L}}+\frac{1}{96\pi^2}\triangle_{\mathcal{L}}S_{\mathcal{L}}-
\frac{1}{24\pi^2}\abs{{\rm Ric\,}_{\mathcal{L}}}^2_{\mathcal{L}}+\frac{1}{96\pi^2}\abs{R^{T^{1,0}X}_{\mathcal{L}}}^2_{\mathcal{L}}\Bigr)(x),\  \forall x\in X.
\end{split}
\end{equation}
\end{thm}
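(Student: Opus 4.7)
The plan is to reduce the computation of $b_0$, $b_1$, $b_2$ to the weighted Bergman kernel expansion of Hsiao--Marinescu on a BRT chart in $\Complex^n$, and then to translate the resulting K\"ahler-geometric expressions into the rigid CR invariants appearing in \eqref{e-gue160822}--\eqref{e-gue160822II}. Since the coefficients $b_j$ are globally defined smooth functions on $X$ by Theorem~\ref{t-gue160605} and $X_{{\rm reg\,}}$ is dense in $X$, it suffices to establish each formula at an arbitrary regular point $x_0$.

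Choose canonical coordinates $(z,\theta)$ on a neighborhood $D\ni x_0$ so that $T=\pr/\pr\theta$ and
\[Z_j=\frac{\pr}{\pr z_j}-i\frac{\pr\varphi}{\pr z_j}\frac{\pr}{\pr\theta},\qquad j=1,\ldots,n,\]
is a local frame of $T^{1,0}X$, where $\varphi=\varphi(z)$ is a rigid real function with $i\pr\ddbar\varphi$ encoding the Levi form. Any $u\in H^0_{b,m}(X)$ restricts on $D$ to $e^{im\theta}v(z)$ with $v$ satisfying the weighted holomorphicity equation $(\pr/\pr\ol z_j+m(\pr\varphi/\pr\ol z_j))v=0$, and the global $L^2(X,dv_X)$-norm of $u$ becomes, up to a factor of $2\pi$, the weighted $L^2$-norm of $v$ with weight $e^{-2m\varphi}$ against the rigid volume density. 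Consequently the restriction of $S_m$ to $D$ differs from the diagonal of the weighted Bergman kernel on the coordinate ball only by a negligible smoothing term, and the Hsiao--Marinescu expansion produces $b_j(x_0)$ as a universal polynomial in the K\"ahler curvature of $\omega_\varphi:=\tfrac12 d\omega_0$ together with derivatives of $\log(a/b)$, where $a,b$ are the volume densities in \eqref{e-gue160531m}, \eqref{e-gue160601m}.

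Matching term by term, the leading Hsiao--Marinescu coefficient is the determinant of the K\"ahler form, which equals $(2\pi)^{-n-1}\det\dot{\mathcal{R}}$ by \eqref{E:1.5.15m}; the $b_1$-coefficient combines the K\"ahler scalar curvature of $\omega_\varphi$ (equal to $\tfrac{1}{4}S_{\mathcal{L}}$ by Theorem~\ref{t-gue161012r}) with an additional scalar coming from the fact that the $L^2$-structure is built from $dv_X$ and not the Levi volume, producing $\tfrac{1}{4}S^\Theta_{\mathcal{L}}$; and $b_2$ splits into the classical terms in $(S_{\mathcal{L}})^2$, $\triangle_{\mathcal{L}} S_{\mathcal{L}}$, $\abs{{\rm Ric\,}_{\mathcal{L}}}^2_{\mathcal{L}}$, $\abs{R^{T^{1,0}X}_{\mathcal{L}}}^2_{\mathcal{L}}$ plus cross terms $S_{\mathcal{L}}S^\Theta_{\mathcal{L}}$, $(S^\Theta_{\mathcal{L}})^2$, $\triangle_{\mathcal{L}} S^\Theta_{\mathcal{L}}$, $\abs{\mathcal{R}^{\det}_\Theta}^2_{\mathcal{L}}$ and $\langle{\rm Ric\,}_{\mathcal{L}}\,|\,\mathcal{R}^{\det}_\Theta\rangle_{\mathcal{L}}$, each tracking the volume-form discrepancy between the rigid Hermitian metric and the Levi metric.

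The hard part will be the bookkeeping of normalizations---the $2\pi$'s from the $S^1$-fiber integration, the signs from the sign convention on $\omega_0$ in \eqref{e-gue160502b}, and the combinatorial coefficients in the Hsiao--Marinescu $b_2$-formula---together with verifying that each rewriting of a K\"ahler invariant on the base in terms of rigid CR data on $X$ is intrinsic, i.e.\ independent of the choice of canonical chart. Once the dictionary between $\omega_\varphi$, $\langle\,\cdot\,|\,\cdot\,\rangle$ and $\langle\,\cdot\,|\,\cdot\,\rangle_{\mathcal{L}}$ is fixed, no further cancellations occur and \eqref{e-gue160822}--\eqref{e-gue160822II} follow.
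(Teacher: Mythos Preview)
Your plan is essentially the same as the paper's: localize via a BRT trivialization, invoke the Hsiao--Marinescu coefficients $b_{B,0},b_{B,1},b_{B,2}$ for the weighted model $(\Complex^n,e^{-2m\varphi})$ (the paper's Theorem~\ref{t-gue160902}), and then translate each K\"ahler quantity on the base into the corresponding rigid CR invariant (the paper's Lemma~\ref{l-gue160902} and Theorem~\ref{t-gue160902I}); the extra $(2\pi)^{-1}$ indeed comes from the $S^1$-fiber in \eqref{e-gue160224I}. One point of phrasing to tighten: the paper does not compare $S_m$ directly to an actual ``weighted Bergman kernel on the coordinate ball'' but to a properly supported \emph{approximate} projector $P_{B,m}$ with prescribed asymptotics, glued to a global $\Gamma_m$ and matched to $S_m$ via the spectral gap of $\Box_{b,m}$ (Theorems~\ref{t-gue160904fI} and~\ref{t-gue160905c}); your sentence ``the restriction of $S_m$ to $D$ differs from the diagonal of the weighted Bergman kernel \dots only by a negligible smoothing term'' hides exactly this step, which is where the $b_j$ of Theorem~\ref{t-gue160605} are actually identified with $\tfrac{1}{2\pi}b_{B,j}(z,z)$.
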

In view of Remark \ref{Rem1},  Theorem 1.6 is a consequence of the calculation in \cite[Theorem 0.1]{MM12}.

This work is organized as follows. In Section~\ref{s:prelim} we introduce some basic notations and recall some definitions for CR manifolds with circle actions. Section~\ref{s-gue160527a} contains a more detailed description of the geometric quantities which appear in Theorem~\ref{t-gue160605I}. Furthermore, we show how some of them are related to geometric objects coming from pseudohermitian geometry (see Section~\ref{s-gue160603}, Theorem \ref{t-gue161012r}). In Section~\ref{s-gue160822} we prove  Theorem~\ref{t-gue160605} and Theorem~\ref{t-gue160605I}.

\section{Preliminaries}\label{s:prelim}

\subsection{Some standard notations}\label{s-gue150508b}
We use the following notations: $\mathbb N=\set{1,2,\ldots}$, $\mathbb N_0=\mathbb N\cup\set{0}$, $\Real$
is the set of real numbers, $\Real_+:=\set{x\in\Real;\, x>0}$, $\ol\Real_+:=\set{x\in\Real;\, x\geq0}$.
For a multiindex $\alpha=(\alpha_1,\ldots,\alpha_n)\in\mathbb N_0^n$
we set $\abs{\alpha}=\alpha_1+\cdots+\alpha_n$. For $x=(x_1,\ldots,x_n)$ we write
\[
\begin{split}
&x^\alpha=x_1^{\alpha_1}\ldots x^{\alpha_n}_n,\quad
 \pr_{x_j}=\frac{\pr}{\pr x_j}\,,\quad
\pr^\alpha_x=\pr^{\alpha_1}_{x_1}\ldots\pr^{\alpha_n}_{x_n}=\frac{\pr^{\abs{\alpha}}}{\pr x^\alpha}\,.
\end{split}
\]
Let $z=(z_1,\ldots,z_n)$, $z_j=x_{2j-1}+ix_{2j}$, $j=1,\ldots,n$, be coordinates of $\Complex^n$.
We write
\[
\begin{split}
&z^\alpha=z_1^{\alpha_1}\ldots z^{\alpha_n}_n\,,\quad\ol z^\alpha=\ol z_1^{\alpha_1}\ldots\ol z^{\alpha_n}_n\,,\\
&\pr_{z_j}=\frac{\pr}{\pr z_j}=
\frac{1}{2}\Big(\frac{\pr}{\pr x_{2j-1}}-i\frac{\pr}{\pr x_{2j}}\Big)\,,\quad\pr_{\ol z_j}=
\frac{\pr}{\pr\ol z_j}=\frac{1}{2}\Big(\frac{\pr}{\pr x_{2j-1}}+i\frac{\pr}{\pr x_{2j}}\Big),\\
&\pr^\alpha_z=\pr^{\alpha_1}_{z_1}\ldots\pr^{\alpha_n}_{z_n}=\frac{\pr^{\abs{\alpha}}}{\pr z^\alpha}\,,\quad
\pr^\alpha_{\ol z}=\pr^{\alpha_1}_{\ol z_1}\ldots\pr^{\alpha_n}_{\ol z_n}=
\frac{\pr^{\abs{\alpha}}}{\pr\ol z^\alpha}\,.
\end{split}
\]

Let $X$ be a $C^\infty$ orientable paracompact manifold.
We let $TX$ and $T^*X$ denote the tangent bundle of $X$ and the cotangent bundle of $X$ respectively.
The complexified tangent bundle of $X$ and the complexified cotangent bundle of $X$
will be denoted by $\Complex TX$ and $\Complex T^*X$ respectively. We write $\langle\,\cdot\,,\cdot\,\rangle$
to denote the pointwise duality between $T^*X$ and $TX$.
We extend $\langle\,\cdot\,,\cdot\,\rangle$ bilinearly to $\Complex T^*X\times\Complex TX$. For $u\in \Complex T^*X$, $v\in\Complex TX$, we also write $u(v):=\langle\,u\,,v\,\rangle$.

Let $E$ be a $C^\infty$ vector bundle over $X$. The fiber of $E$ at $x\in X$ will be denoted by $E_x$.
Let $F$ be another vector bundle over $X$. We write
$F\boxtimes E^*$ to denote the vector bundle over $X\times X$ with fiber over $(x, y)\in X\times X$
consisting of the linear maps from $E_y$ to $F_x$.

Let $Y\subset X$ be an open set. The spaces of
smooth sections of $E$ over $Y$ and distribution sections of $E$ over $Y$ will be denoted by $C^\infty(Y, E)$ and $\mathscr D'(Y, E)$ respectively.
Let $\mathscr E'(Y, E)$ be the subspace of $\mathscr D'(Y, E)$ whose elements have compact support in $Y$.
For $m\in\Real$, we let $H^m(Y, E)$ denote the Sobolev space
of order $m$ of sections of $E$ over $Y$. Put
\begin{gather*}
H^m_{\rm loc\,}(Y, E)=\big\{u\in\mathscr D'(Y, E);\, \varphi u\in H^m(Y, E),
      \,\forall \varphi\in C^\infty_0(Y)\big\}\,,\\
      H^m_{\rm comp\,}(Y, E)=H^m_{\rm loc}(Y, E)\cap\mathscr E'(Y, E)\,.
\end{gather*}

\subsection{Definitions and notations from semi-classical analysis} \label{s-gue160902}

Let $\Omega$ be a $C^\infty$ paracompact manifold equipped with a smooth density of integration.
Any continuous linear operator
$A:C^\infty_0(\Omega)\To \mathscr D'(\Omega)$ has a Schwartz distribution kernel, denoted
$A(x, y)\in\mathscr D'(\Omega\times\Omega)$.
We say that $A$ is a \emph{smoothing operator} if $A(x,y)\in C^\infty(\Omega\times\Omega)$.
We say that $A$ is properly supported if ${\rm Supp\,}A(x,y)\subset\Omega\times\Omega$ is proper. That is, the two projections: $t_x:(x,y)\in{\rm Supp\,}A(x,y)\To x\in\Omega$, $t_y:(x,y)\in{\rm Supp\,}A(x,y)\To y\in\Omega$ are proper (i.e. the inverse images of $t_x$ and $t_y$ of all compact subsets of $\Omega$ are compact).

Let $H(x,y)\in\mathscr D'(\Omega\times\Omega)$. We write $H$ to denote the unique continuous operator $H:C^\infty_0(\Omega)\To\mathscr D'(\Omega)$ with distribution kernel $H(x,y)$. In this work, we identify $H$ with $H(x,y)$.

Let $W_1$, $W_2$ be open sets in $\Real^N$.
An $m$-dependent continuous operator
$A_m:C^\infty_0(W_1)\To\mathscr D'(W_2)$ is called $m$-negligible on $W_2\times W_1$
if for $m$ large enough $A_m$ is smoothing  and for any $K\Subset W_2\times W_1$, any
multi-indices $\alpha$, $\beta$ and any $N\in\mathbb N$ there exists $C_{K,\alpha,\beta,N}>0$
such that
\begin{equation}\label{e-gue13628III}
\abs{\pr^\alpha_x\pr^\beta_yA_m(x, y)}\leq C_{K,\alpha,\beta,N}m^{-N}\:\: \text{on $K$}.
\end{equation}
In that case we write
\[A_m(x,y)=O(m^{-\infty})\:\:\text{on $W_2\times W_1$,}\]
or
\[A_m=O(m^{-\infty})\:\:\text{on $W_2\times W_1$.}\]
If $A_m, B_m:C^\infty_0(W_1)\To\mathscr D'(W_2)$ are $m$-dependent continuous operators,
we write $A_m= B_m+O(m^{-\infty})$ on $W_2\times W_1$ or $A_m(x,y)=B_m(x,y)+O(m^{-\infty})$ on $W_2\times W_1$ if $A_m-B_m=O(k^{-\infty})$ on $W_2\times W_1$.

Let $A_m, B_m:C^\infty(X)\To C^\infty(X)$ be $m$-dependent smoothing operators. We write $A_m=B_m+O(m^{-\infty})$ on $X\times X$ if on every local coordinate patch $D$, $A_m=B_m+O(m^{-\infty})$ on $D\times D$.

Let $F_m:H^s(X)\To H^{s'}(X)$ be a $m$-dependent continuous operator, where $s, s'\in\Real$. We write
\[F_m=O(m^{n_0}):H^s(X)\To H^{s'}(X),\ \ n_0\in\mathbb Z,\]
if there is a positive constant $c$ independent of $m$, such that
\begin{equation} \label{s1-e1su}
\norm{F_mu}_{s'}\leq cm^{n_0}\norm{u}_{s},\ \ \forall u\in H^s(X),
\end{equation}
where $\norm{\cdot}_s$ denotes the usual Sobolev norm on $X$ of order $s$. It is easy to check that
$F_m=O(m^{-\infty})$ on $X\times X$ if and only if $F_m=O(m^{-N}):H^{-s}(X)\To H^{s}(X)$, for every $N>0$ and $s\in\mathbb N$.

We recall the definition of the semi-classical symbol spaces

\begin{defn} \label{d-gue140826}
Let $W$ be an open set in $\Real^N$. Let
\begin{gather*}
S(1;W):=\Big\{a\in C^\infty(W)\,|\, \forall\alpha\in\mathbb N^N_0:
\sup_{x\in W}\abs{\pr^\alpha a(x)}<\infty\Big\},\\
S^0_{{\rm loc\,}}(1;W):=\Big\{(a(\cdot,m))_{m\in\Real}\,|\, \forall\alpha\in\mathbb N^N_0,
\forall \chi\in C^\infty_0(W)\,:\:\sup_{m\in\Real, m\geq1}\sup_{x\in W}\abs{\pr^\alpha(\chi a(x,m))}<\infty\Big\}\,.
\end{gather*}
For $k\in\Real$ let
\[
S^k_{{\rm loc}}(1):=S^k_{{\rm loc}}(1;W)=\Big\{(a(\cdot,m))_{m\in\Real}\,|\,(m^{-k}a(\cdot,m))\in S^0_{{\rm loc\,}}(1;W)\Big\}\,.
\]
Hence $a(\cdot,m)\in S^k_{{\rm loc}}(1;W)$ if for every $\alpha\in\mathbb N^N_0$ and $\chi\in C^\infty_0(W)$, there
exists $C_\alpha>0$ independent of $m$, such that $\abs{\pr^\alpha (\chi a(\cdot,m))}\leq C_\alpha m^{k}$ holds on $W$.

Consider a sequence $a_j\in S^{k_j}_{{\rm loc\,}}(1)$, $j\in\N_0$, where $k_j\searrow-\infty$,
and let $a\in S^{k_0}_{{\rm loc\,}}(1)$. We say
\[
a(\cdot,m)\sim
\sum\limits^\infty_{j=0}a_j(\cdot,m)\:\:\text{in $S^{k_0}_{{\rm loc\,}}(1)$},
\]
if for every
$\ell\in\N_0$ we have $a-\sum^{\ell}_{j=0}a_j\in S^{k_{\ell+1}}_{{\rm loc\,}}(1)$ .
For a given sequence $a_j$ as above, we can always find such an asymptotic sum
$a$, which is unique up to an element in
$S^{-\infty}_{{\rm loc\,}}(1)=S^{-\infty}_{{\rm loc\,}}(1;W):=\cap _kS^k_{{\rm loc\,}}(1)$.
\end{defn}

\subsection{Set up and terminology}\label{s-gue150508bI}

Let $(X, T^{1,0}X)$ be a compact CR manifold of dimension $2n+1$, $n\geq 1$, where $T^{1,0}X$ is a CR structure of $X$. That is $T^{1,0}X$ is a subbundle of rank $n$ of the complexified tangent bundle $\mathbb{C}TX$, satisfying $T^{1,0}X\cap T^{0,1}X=\{0\}$, where $T^{0,1}X=\overline{T^{1,0}X}$, and $[\mathcal V,\mathcal V]\subset\mathcal V$, where $\mathcal V=C^\infty(X, T^{1,0}X)$. We assume that $X$ admits an $S^1$ action: $S^1\times X\rightarrow X$. We write $e^{i\theta}$ to denote the $S^1$ action. Let $T\in C^\infty(X, TX)$ be the global real vector field induced by the $S^1$ action given by
$(Tu)(x)=\frac{\partial}{\partial\theta}\left(u(e^{i\theta}\circ x)\right)|_{\theta=0}$, $u\in C^\infty(X)$.

\begin{defn}\label{d-gue160502}
We say that the $S^1$ action $e^{i\theta}$ is CR if
$[T, C^\infty(X, T^{1,0}X)]\subset C^\infty(X, T^{1,0}X)$ and the $S^1$ action is transversal if for each $x\in X$,
$\Complex T(x)\oplus T_x^{1,0}X\oplus T_x^{0,1}X=\mathbb CT_xX$. Moreover, we say that the $S^1$ action is locally free if $T\neq0$ everywhere. It should be mentioned that transversality implies locally free.
\end{defn}

We assume throughout that $(X, T^{1,0}X)$ is a compact connected CR manifold with a transversal CR locally free $S^1$ action $e^{i\theta}$ and we let $T$ be the global vector field induced by the $S^1$ action. Let $\omega_0\in C^\infty(X,T^*X)$ be the global real one form determined by $\langle\,\omega_0\,,\,u\,\rangle=0$, for every $u\in T^{1,0}X\oplus T^{0,1}X$ and $\langle\,\omega_0\,,\,T\,\rangle=-1$.

\begin{defn}\label{d-gue150508f}
For $p\in X$, the Levi form $\mathcal L_p$ is the Hermitian quadratic form on $T^{1,0}_pX$ given by
$\mathcal{L}_p(U,\ol V)=-\frac{1}{2i}\langle\,d\omega_0(p)\,,\,U\wedge\ol V\,\rangle$, $U, V\in T^{1,0}_pX$.
\end{defn}

If the Levi form $\mathcal{L}_p$ is positive definite, we say that $X$ is strongly pseudoconvex at $p$. If the Levi form is positive definite at every point of $X$, we say that $X$ is strongly pseudoconvex. We assume throughout that $(X, T^{1,0}X)$ is strongly pseudoconvex.

Denote by $T^{*1,0}X$ and $T^{*0,1}X$ the dual bundles of
$T^{1,0}X$ and $T^{0,1}X$ respectively. Define the vector bundle of $(p,q)$ forms by
$T^{*p,q}X=\Lambda^p(T^{*1,0}X)\wedge\Lambda^q(T^{*0,1}X)$. Let $\Omega^{p,q}(D)$
denote the space of smooth sections of $T^{*p,q}X$ over $D$ and let $\Omega_0^{p,q}(D)$
be the subspace of $\Omega^{p,q}(D)$ whose elements have compact support in $D$.
For $p=q=0$, we write $C^\infty(D):=\Omega^{0,0}(D)$ and $C^\infty_0(D):=\Omega^{0,0}_0(D)$.

Fix $\theta_0\in]-\pi, \pi[$, $\theta_0$ small. Let
$$d e^{i\theta_0}: \mathbb CT_x X\rightarrow \mathbb CT_{e^{i\theta_0}x}X$$
denote the differential map of $e^{i\theta_0}: X\rightarrow X$. By the CR property of the $S^1$ action, we can check that
\begin{equation}\label{e-gue150508fa}
\begin{split}
de^{i\theta_0}:T_x^{1,0}X\rightarrow T^{1,0}_{e^{i\theta_0}x}X,\\
de^{i\theta_0}:T_x^{0,1}X\rightarrow T^{0,1}_{e^{i\theta_0}x}X,\\
de^{i\theta_0}(T(x))=T(e^{i\theta_0}x).
\end{split}
\end{equation}
Let $(e^{i\theta_0})^*:\Lambda^r(\Complex T^*X)\To\Lambda^r(\Complex T^*X)$ be the pull-back map by $e^{i\theta_0}$, $r=0,1,\ldots,2n+1$. From \eqref{e-gue150508fa}, it is easy to see that for every $q=0,1,\ldots,n$ one has
\begin{equation}\label{e-gue150508faI}
(e^{i\theta_0})^*:T^{*0,q}_{e^{i\theta_0}x}X\To T^{*0,q}_{x}X.
\end{equation}
Let $u\in\Omega^{0,q}(X)$ be arbitrary. Define
\begin{equation}\label{e-gue150508faII}
Tu:=\frac{\pr}{\pr\theta}\bigr((e^{i\theta})^*u\bigr)|_{\theta=0}\in\Omega^{0,q}(X).
\end{equation}
(See also \eqref{lI}.) For every $\theta\in\Real$ and every $u\in C^\infty(X,\Lambda^r(\Complex T^*X))$, we write $u(e^{i\theta}\circ x):=(e^{i\theta})^*u(x)$. It is clear that for every $u\in C^\infty(X,\Lambda^r(\Complex T^*X))$, we have
\begin{equation}\label{e-gue150510f}
u(x)=\sum_{m\in\mathbb Z}\frac{1}{2\pi}\int^{\pi}_{-\pi}u(e^{i\theta}\circ x)e^{-im\theta}d\theta.
\end{equation}

For every $m\in\mathbb Z$, let
\begin{equation}\label{e-gue150508dI}
\Omega^{0,q}_m(X):=\set{u\in\Omega^{0,q}(X);\, Tu=imu},\ \ q=0,1,2,\ldots,n.
\end{equation}
We denote $C^\infty_m(X):=\Omega^{0,0}_m(X)$.

Let $\ddbar_b:\Omega^{0,q}(X)\rightarrow\Omega^{0,q+1}(X)$ be the tangential Cauchy-Riemann operator. From the CR property of the $S^1$ action, it is straightforward to see that (see also \eqref{e-gue150514f})
\[T\ddbar_b=\ddbar_bT\ \ \mbox{on $\Omega^{0,q}(X)$}\]
holds. Hence,
\begin{equation}\label{e-gue160527}
\ddbar_b:\Omega^{0,q}_m(X)\To\Omega^{0,q+1}_m(X),\ \ \forall m\in\mathbb Z.
\end{equation}

\begin{defn}\label{d-gue50508d}
Let $D\subset U$ be an open set. We say that a function $u\in C^\infty(D)$ is rigid if $Tu=0$. We say that a function $u\in C^\infty(X)$ is Cauchy-Riemann (CR for short)
if $\ddbar_bu=0$. We call $u$ a rigid CR function if  $\ddbar_bu=0$ and $Tu=0$.
\end{defn}
\begin{defn} \label{d-gue150508dI}
Let $F$ be a complex vector bundle over $X$. We say that $F$ is rigid (resp.\ CR, resp.\ rigid CR) if there exists
an open cover $(U_j)_j$ of $X$ and trivializing frames $\set{f^1_j,f^2_j,\dots,f^r_j}$ on $U_j$,
such that the corresponding transition matrices are rigid (resp.\ CR, resp.\ rigid CR).
\end{defn}

Let $F$ be a rigid (CR) vector bundle over $X$. In the following, we fix an open cover $(U_j)^N_{j=1}$ of $X$ and  a family $\set{f^1_j, f^2_j, \dots, f^r_j}^N_{j=1}$
of trivializing frames $\set{f^1_j,f^2_j,\dots,f^r_j}$
on each $U_j$ such that the entries of the transition matrices between different frames $\set{f^1_j, f^2_j,\dots, f^r_j}$ are rigid (CR). For any local trivializing frame $\set{f^1,\ldots,f^r}$ of $F$ on an open set $D$, we say that $\set{f^1,\ldots,f^r}$ is a rigid (CR) frame if the entries of the transition matrices between $\set{f^1,\ldots,f^r}$ and $\set{f^1_j,\ldots,f^r_j}$ are rigid (CR), for every $j$, and we call $D$ a local rigid (CR) trivialization. We can define the operator $T$ on $\Omega^{0,q}(X,F)$ in the standard way.

\begin{defn}\label{d-gue150514f}
Let $F$ be a complex rigid vector bundle over $X$ and let $\langle\,\cdot\,|\,\cdot\,\rangle_F$ be a Hermitian metric on $F$. We say that $\langle\,\cdot\,|\,\cdot\,\rangle_F$ is a rigid Hermitian metric if for every rigid local frames $f_1,\ldots, f_r$ of $F$, we have $T\langle\,f_j\,|\,f_k\,\rangle_F=0$, for every $j,k=1,2,\ldots,r$.
\end{defn}

We notice that Definition~\ref{d-gue150514f} above depends on the fixed open cover $(U_j)^N_{j=1}$ of $X$ and  a family $\set{f^1_j,f^2_j,\dots,f^r_j}^N_{j=1}$ of trivializing frames $\set{f^1_j,f^2_j,\dots,f^r_j}$ on each $U_j$ such that the entries of the transition matrices between different frames $\set{f^1_j,f^2_j,\dots,f^r_j}$ are rigid.

It is well-known that there is a rigid Hermitian metric on any rigid vector bundle $F$ (see Theorem 2.10 in~\cite{CHT} and Theorem 10.5 in~\cite{Hsiao14}). Note that  Baouendi-Rothschild-Treves~\cite{BRT85} proved that $T^{1,0}X$ is a rigid complex vector bundle over $X$. More precisely, by arranging an atlas of BRT trivializations, it is easy to see that  that $T^{1, 0}X$ is a rigid CR vector bundle over $X$. In this work, we will fix an open BRT trivialization cover $(U_j)^N_{j=1}$ of $X$ and  a family
of BRT frames $\set{Z_j}^n_{j=1}$ on each $U_j$
(see Theorem~\ref{t-gue150514} for the definitions of BRT trivializations and BRT frames).

From now on, take a rigid Hermitian metric $\langle\,\cdot\,|\,\cdot\,\rangle$ on $\Complex TX$ such that $T^{1,0}X\perp T^{0,1}X$, $T\perp (T^{1,0}X\oplus T^{0,1}X)$, $\langle\,T\,|\,T\,\rangle=1$.
We denote by $dv_X=dv_X(x)$ the volume form on $X$ induced by the fixed
Hermitian metric $\langle\,\cdot\,|\,\cdot\,\rangle$ on $\Complex TX$. Then we get natural global $L^2$ inner products $(\,\cdot\,|\,\cdot\,)$ on $\Omega^{0,q}(X)$. We denote by $L^2_{(0,q)}(X)$ the completion of $\Omega^{0,q}(X)$  with respect to $(\,\cdot\,|\,\cdot\,)$. For $f\in L^{2}_{(0,q)}(X)$, we denote $\norm{f}^2:=(\,f\,|\,f\,)$. For each $m\in\mathbb Z$, we denote by $L^2_{(0,q),m}(X)$ the completion of $\Omega^{0,q}_m(X)$ with respect to $(\,\cdot\,|\,\cdot\,)$. For $q=0$, we write $L^2(X):=L^2_{(0,0)}(X)$, $L^2_m(X):=L^2_{(0,0),m}(X)$.

\section{Rigid CR Geometry}\label{s-gue160527a}

In this section, we will introduce some rigid geometric functions on CR manifolds with $S^1$ action which appear in Theorem \ref{t-gue160605I}.

We would like to introduce these geometric quantities via local computations. Therefore we need the following result on local coordinates due to Baouendi-Rothschild-Treves~\cite{BRT85}.

\begin{thm}\label{t-gue150514}
For every point $x_0\in X$, we can find local coordinates $x=(x_1,\cdots,x_{2n+1})=(z,\theta)=(z_1,\cdots,z_{n},\theta), z_j=x_{2j-1}+ix_{2j},j=1,\cdots,n, x_{2n+1}=\theta$, defined in some small neighborhood $D=\{(z, \theta): \abs{z}<\delta, -\varepsilon_0<\theta<\varepsilon_0\}$ of $x_0$, $\delta>0$, $0<\varepsilon_0<\pi$, such that $(z(x_0),\theta(x_0))=(0,0)$ and
\begin{equation}\label{e-can}
\begin{split}
&T=\frac{\partial}{\partial\theta}\\
&Z_j=\frac{\partial}{\partial z_j}+i\frac{\partial\varphi}{\partial z_j}(z)\frac{\partial}{\partial\theta},j=1,\cdots,n
\end{split}
\end{equation}
where $Z_j(x), j=1,\cdots, n$, form a basis of $T_x^{1,0}X$, for each $x\in D$ and $\varphi(z)\in C^\infty(D,\mathbb R)$ independent of $\theta$. We call $(D,(z,\theta),\varphi)$ BRT trivialization, $x=(z,\theta)$ canonical coordinates and  $\set{Z_j}^n_{j=1}$ BRT frames.
\end{thm}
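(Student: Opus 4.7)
\textbf{Proof plan for Theorem~\ref{t-gue150514}.} The plan is to produce the coordinates in three stages: first straighten the vector field $T$, then construct rigid CR coordinates on a transversal slice, and finally normalize the $\theta$-variable to put the basis vectors into the stated form $\partial_{z_j}+i(\partial\varphi/\partial z_j)\partial_\theta$.

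First, since the action is transversal and locally free, $T$ is nowhere vanishing near $x_0$, so the flow-box theorem gives coordinates $(y_1,\ldots,y_{2n},\theta)$ on a neighborhood $D'$ of $x_0$ in which $T=\partial/\partial\theta$. The transversal slice $S:=\{\theta=0\}\cap D'$ is a $2n$-real-dimensional submanifold. The projection $\mathbb{C}TX\to \mathbb{C}TX/\mathbb{C}T$ restricts, by transversality, to an isomorphism $T^{1,0}X|_S\xrightarrow{\sim} \mathbb{C}TS$ onto a rank $n$ subbundle $T^{1,0}S$. The CR property $[T,C^\infty(X,T^{1,0}X)]\subset C^\infty(X,T^{1,0}X)$ ensures this almost complex structure is invariant along the flow of $T$; together with the formal integrability $[T^{1,0}X,T^{1,0}X]\subset T^{1,0}X$ it descends to an integrable almost complex structure on $S$. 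By the Newlander--Nirenberg theorem (equivalently, by the classical CR Poincaré lemma in real dimension $2n$), one obtains complex coordinates $z_1,\ldots,z_n$ on a smaller neighborhood $V\subset S$ of $x_0$. Extend these to $D\subset D'$ by $T$-invariance, i.e.\ $Tz_j=0$; then $z_1,\ldots,z_n$ are rigid CR functions and $(z,\theta)$ form a coordinate system on $D$.

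Next I determine the shape of a local basis of $T^{1,0}X$ in these coordinates. Any local section $W\in C^\infty(D,T^{1,0}X)$ has the form $W=\sum_k A_k\partial_{z_k}+\sum_k B_k\partial_{\bar z_k}+C\,\partial_\theta$. Because $z_j$ is CR, $\overline W z_j=0$ for every $W\in T^{1,0}X$, and computing this annihilation forces $B_k\equiv 0$. Hence $T^{1,0}X$ is spanned, near $x_0$, by vector fields of the form $\sum_k A_{jk}\partial_{z_k}+C_j\partial_\theta$, and by transversality the matrix $(A_{jk})$ is invertible at $x_0$; inverting it produces a basis
\begin{equation*}
Z_j=\frac{\partial}{\partial z_j}+a_j(z,\bar z,\theta)\frac{\partial}{\partial\theta},\qquad j=1,\ldots,n.
\end{equation*}
The CR condition $[T,Z_j]\in T^{1,0}X$ together with $\partial_\theta\notin T^{1,0}X$ forces $\partial a_j/\partial\theta=0$, so each $a_j$ is a function of $(z,\bar z)$ only. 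The integrability $[Z_j,Z_k]\in T^{1,0}X$ yields $[Z_j,Z_k]=(\partial_{z_j}a_k-\partial_{z_k}a_j)\partial_\theta$, whose non-$T^{1,0}X$ component must vanish, giving $\partial a_k/\partial z_j=\partial a_j/\partial z_k$. By the holomorphic Poincaré lemma in the $z$-variables, there exists a (complex-valued) $h(z,\bar z)$ with $a_j=\partial h/\partial z_j$.

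The last step is to make $h$ purely imaginary by a change of the $\theta$ coordinate preserving $T=\partial/\partial\theta$. Write $h=u+iv$ with $u,v$ real. Set
\begin{equation*}
\widetilde\theta:=\theta-u(z,\bar z),\qquad \varphi(z,\bar z):=v(z,\bar z).
\end{equation*}
Then $\partial/\partial\widetilde\theta=\partial/\partial\theta=T$, while the chain rule gives $\partial/\partial z_j|_{\mathrm{old}}=\partial/\partial z_j|_{\mathrm{new}}-(\partial u/\partial z_j)\partial/\partial\widetilde\theta$, so
\begin{equation*}
Z_j=\frac{\partial}{\partial z_j}+\Bigl(a_j-\frac{\partial u}{\partial z_j}\Bigr)\frac{\partial}{\partial\widetilde\theta}=\frac{\partial}{\partial z_j}+i\frac{\partial\varphi}{\partial z_j}\frac{\partial}{\partial\widetilde\theta}.
\end{equation*}
Renaming $\widetilde\theta$ back to $\theta$ and shrinking $D$ to the polydisc product stated in the theorem completes the proof. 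The main obstacle is the second stage: one must verify carefully that the integrable almost complex structure induced on the slice $S$ genuinely comes from the CR structure of $X$ via the CR and transversality hypotheses, so that applying Newlander--Nirenberg on $S$ yields CR functions on $X$ after $T$-invariant extension; once that is clear, the remaining steps are bookkeeping with the Jacobi identity and a linear change of the $\theta$-coordinate.
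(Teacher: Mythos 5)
Your proof is correct and is, in outline, the Baouendi--Rothschild--Treves construction. Note, however, that the paper does not prove Theorem~\ref{t-gue150514} itself; it is stated as a known result and cited from~\cite{BRT85}, so there is no in-paper argument to compare against. Your three-stage scheme (flow-box to straighten $T$, Newlander--Nirenberg on a transversal slice with the induced integrable almost complex structure, then a $\theta$-shift $\theta\mapsto\theta-u(z,\bar z)$ to make the potential purely imaginary) is precisely the standard route.

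Two small points of precision, neither of which is a gap. First, the parenthetical appeal to a ``classical CR Poincar\'e lemma in real dimension~$2n$'' as an alternative to Newlander--Nirenberg is misleading: on the slice $S$ you have constructed a genuine integrable almost complex structure on a $2n$-real-dimensional manifold, so Newlander--Nirenberg is exactly the right tool and there is no CR analogue being substituted. Second, to obtain the potential $h$ with $a_j=\partial h/\partial z_j$ from $\partial_{z_j}a_k=\partial_{z_k}a_j$ you need a Poincar\'e-type lemma for the $\partial$-operator (with $\bar z$ as a parameter), equivalently the conjugate of the Dolbeault--Grothendieck lemma; calling it the ``holomorphic Poincar\'e lemma'' suggests the $a_j$ are holomorphic, which they are not. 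A clean way to produce $h$ is the explicit homotopy integral $h(z,\bar z)=\int_0^1\sum_j a_j(tz,\bar z)\,z_j\,dt$, which by the compatibility condition and integration by parts satisfies $\partial_{z_k}h=a_k$. Also, you should note that the matrix $(A_{jk})$ is invertible throughout a neighborhood, not only at $x_0$; this follows immediately from transversality and is what allows the normal form $Z_j=\partial_{z_j}+a_j\partial_\theta$ to hold on all of $D$ after shrinking.
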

In order to prove that the geometric objects which will arise from local calculations using BRT trivializations are independent of the choice of such coordinates we have to understand the transition maps between BRT trivialisations. For the following result see  Theorem~II.1 and Proposition~I.2 in \cite{BRT85}.

\begin{lem}\label{l-gue161012}
Let $(D,(z,\theta),\varphi)$ be a BRT trivialization and let
$y=(y_1,\ldots,y_{2n+1})=(w,\gamma)$, $w_j=y_{2j-1}+iy_{2j}$, $j=1,\ldots,n$, $\gamma=y_{2n+1}$, be another canonical coordinates on $D$. Write
\begin{equation}\label{sp3-eVI}\begin{split}
&T=\frac{\pr}{\pr\gamma},\\
&\Td Z_j=\frac{\pr}{\pr w_j}+i\frac{\pr\Td\varphi}{\pr w_j}(w)\frac{\pr}{\pr\gamma},\ \ j=1,\ldots,n,
\end{split}
\end{equation}
where $\Td Z_j(y)$, $j=1,\ldots,n$, form a basis of $T^{1,0}_yX$, for each $y\in D$, and $\Td\varphi(w)\in C^\infty(D,\Real)$ independent of $\gamma$. Then,
\begin{equation}\label{sp3-eVII}
\begin{split}
&w=(w_1,\ldots,w_{n})=(H_1(z),\ldots,H_{n}(z))=H(z),\ \ H_j(z)\in C^\infty,\ \ \forall j,\\
&\gamma=\theta+G(z),\ \ G(z)\in C^\infty,
\end{split}
\end{equation}
where for each $j=1,\ldots,n$, $H_j(z)$ is holomorphic.
\end{lem}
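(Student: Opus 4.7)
My plan is to deduce the two statements by matching the two characterizations of the CR structure and the Reeb-type vector field $T$ in the overlap.

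First, I would exploit that $T$ is globally defined on $X$, so the equalities $T=\partial/\partial\theta$ and $T=\partial/\partial\gamma$ must agree pointwise on $D$. Viewing $(w,\gamma)$ as smooth functions of $(z,\theta)$ (real coordinates $(x_1,\dots,x_{2n+1})$), this says
\begin{equation*}
\frac{\pr w_j}{\pr\theta}=0\ \ (j=1,\dots,n),\qquad \frac{\pr\gamma}{\pr\theta}=1.
\end{equation*}
The first set of equations gives $w_j=H_j(z)$ for some smooth functions $H_j$ of $z=(z_1,\dots,z_n)$ only, and integrating the second in $\theta$ gives $\gamma=\theta+G(z)$ for some smooth real-valued $G$ (real because $\gamma$ and $\theta$ are both real coordinates). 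This already yields \eqref{sp3-eVII} except for the holomorphicity of each $H_j$.

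Next, to show $H_j$ is holomorphic, I would use that the same subbundle $T^{1,0}X\subset \Complex TX$ is spanned by $\set{Z_j}$ and by $\set{\Td Z_k}$ on $D$, so each $Z_j$ lies in the $\Complex$-span of $\Td Z_1,\dots,\Td Z_n$; equivalently, when written in the basis $\set{\Td Z_k,\,\overline{\Td Z_k},\,T}$, its components along $\overline{\Td Z_k}$ must vanish. Using the chain rule with $w_k=H_k(z)$ and $\gamma=\theta+G(z)$, I would compute
\begin{equation*}
\frac{\pr}{\pr z_j}=\sum_{k}\frac{\pr H_k}{\pr z_j}\frac{\pr}{\pr w_k}+\sum_{k}\frac{\pr\ol{H_k}}{\pr z_j}\frac{\pr}{\pr\ol w_k}+\frac{\pr G}{\pr z_j}\frac{\pr}{\pr\gamma},
\end{equation*}
so
\begin{equation*}
Z_j=\sum_k\frac{\pr H_k}{\pr z_j}\frac{\pr}{\pr w_k}+\sum_k\frac{\pr\ol{H_k}}{\pr z_j}\frac{\pr}{\pr\ol w_k}+\Bigr(\frac{\pr G}{\pr z_j}+i\frac{\pr\varphi}{\pr z_j}\Bigr)\frac{\pr}{\pr\gamma}.
\end{equation*}
Rewriting $\pr/\pr w_k=\Td Z_k-i(\pr\Td\varphi/\pr w_k)\pr/\pr\gamma$ and $\pr/\pr\ol w_k=\ol{\Td Z_k}+i(\pr\Td\varphi/\pr\ol w_k)\pr/\pr\gamma$, the $\overline{\Td Z_k}$ components of $Z_j$ are precisely $\pr\ol{H_k}/\pr z_j=\ol{\pr H_k/\pr\ol z_j}$. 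Vanishing of these forces $\pr H_k/\pr\ol z_j=0$ for all $j,k$, i.e. each $H_k$ is holomorphic.

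The only mildly delicate point is bookkeeping the change of variables between the two BRT charts and ensuring that all $\theta$-dependence is correctly tracked; since $T$ sits on the diagonal of the transition and $\varphi,\Td\varphi,G$ are all $\theta$-independent, the computation is clean, and no additional analytic input is required beyond the equality of the two CR subbundles.
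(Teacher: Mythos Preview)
Your argument is correct. The paper does not actually give a proof of this lemma; it simply cites \cite{BRT85} (Theorem~II.1 and Proposition~I.2) for the result. By contrast, you supply a direct, self-contained argument: first using $T=\pr/\pr\theta=\pr/\pr\gamma$ to force $w_j=H_j(z)$ and $\gamma=\theta+G(z)$, and then reading off the holomorphicity of $H_k$ from the vanishing of the $\overline{\Td Z_k}$-components of $Z_j$ in the basis $\set{\Td Z_k,\overline{\Td Z_k},T}$. This is entirely elementary and avoids the external reference; the only additional content in the cited result that you do not extract (and do not need for \eqref{sp3-eVII}) is the relation coming from the vanishing of the $T$-component, which links $G$, $\varphi$, $\Td\varphi$ and $H$.
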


From \eqref{sp3-eVII}, we can check that
\begin{equation}\label{spca}
d\ol w_j=\sum^{n}_{l=1}\ol{\left(\frac{\pr H_j}{\pr z_l}\right)}d\ol z_l,\ \ j=1,\ldots,n.
\end{equation}

\begin{rem}\label{r-gue160529}
By using BRT trivialization, we get another way to define $Tu, \forall u\in\Omega^{0,q}(X)$. Let $(D,(z,\theta),\varphi)$ be a BRT trivialization. It is clear that
$$\{d\overline{z_{j_1}}\wedge\cdots\wedge d\overline{z_{j_q}}, 1\leq j_1<\cdots<j_q\leq n\}$$
is a basis for $T^{\ast0,q}_xX$, for every $x\in D$. Let $u\in\Omega^{0,q}(X)$. On $D$, we write
\begin{equation}\label{e-gue150524fb}
u=\sum\limits_{1\leq j_1<\cdots<j_q\leq n}u_{j_1\cdots j_q}d\overline{z_{j_1}}\wedge\cdots\wedge d\overline{z_{j_q}}.
\end{equation}
Then on $D$ we can check that
\begin{equation}\label{lI}
Tu=\sum\limits_{1\leq j_1<\cdots<j_q\leq n}(Tu_{j_1\cdots j_q})d\overline{z_{j_1}}\wedge\cdots\wedge d\overline{z_{j_q}}
\end{equation}
and $Tu$ is independent of the choice of BRT trivializations. Note that on BRT trivialization $(D,(z,\theta),\varphi)$, we have
\begin{equation}\label{e-gue150514f}
\ddbar_b=\sum^n_{j=1}d\ol z_j\wedge(\frac{\partial}{\partial\ol z_j}-i\frac{\partial\varphi}{\partial\ol z_j}(z)\frac{\partial}{\partial\theta}).
\end{equation}
\end{rem}

Let us return to our situation. We now introduce the geometric objects used in our main result Theorem~\ref{t-gue160605I}. The Levi form $\mathcal{L}_p$ induces a Hermitian metric $\langle\,\cdot\,|\,\cdot\,\rangle_{\mathcal{L}}$ on $\Complex TX$ given by
\begin{equation}\label{e-gue160529}
\begin{split}
&\langle\,U\,|\,V\,\rangle_{\mathcal{L}}=\frac{1}{2\pi}\langle\,id\omega_0\,,\,U\wedge\ol V\,\rangle,\  \ \forall U, V\in T^{1,0}X,\\
&T^{1,0}X\perp T^{0,1}X\perp\Complex T,\\
&\langle\,T\,|\,T\,\rangle_{\mathcal{L}}=1,\\
&\langle\,\ol U\,|\,\ol V\rangle_{\mathcal{L}}=\ol{\langle\,U\,|\,V\,\rangle_{\mathcal{L}}},\ \ \forall U, V\in T^{0,1}X.
\end{split}
\end{equation}
The Hermitian metric  $\langle\,\cdot\,|\,\cdot\,\rangle_{\mathcal{L}}$ on $\Complex TX$ induces a Hermitian metric  on $\oplus_{r=1}^{2n+1}\Lambda^r(\Complex T^*X)$, also denoted by $\langle\,\cdot\,|\,\cdot\,\rangle_{\mathcal{L}}$.
For $u\in\Lambda^r(\Complex T^*X)$, we denote $\abs{u}^2_{\mathcal{L}}:=\langle\,u\,|\,u\,\rangle_{\mathcal{L}}$.

Let $\triangle_{\mathcal{L}}$ be as in \eqref{s1-e9m}. Let $(D,(z,\theta),\varphi)$ be a BRT trivialization. We have
\begin{equation} \label{e-gue160531II}
\triangle_{\mathcal{L}}=(-2)\sum^n_{j,k=1}\langle\,dz_j\,|\,dz_k\,\rangle_{\mathcal{L}}\Bigr(\frac{\partial}{\partial z_j}+i\frac{\partial\varphi}{\partial z_j}(z)\frac{\partial}{\partial\theta}\Bigr)\Bigr(\frac{\partial}{\partial\ol z_k}-i\frac{\partial\varphi}{\partial\ol z_k}(z)\frac{\partial}{\partial\theta}\Bigr).
\end{equation}
Let $x=(z,\theta)$ be canonical coordinates on an open set $D\subset X$. Let
\begin{equation}\label{e-gue160531}
\frac{1}{n!}\Bigr((-\frac{1}{2\pi}d\omega_0)^n\wedge(-\omega_0)\Bigr)(x)=a(x)dx_1\cdots dx_{2n+1}\ \ \mbox{on $D$},
\end{equation}
where $a(x)\in C^\infty(D)$.
The rigid scalar curvature $S_{\mathcal{L}}$ is given by
\begin{equation}\label{e-gue160531I}
S_{\mathcal{L}}(x):=\triangle_{\mathcal{L}}(\log a(x)).
\end{equation}
Let
$y=(y_1,\ldots,y_{2n+1})=(w,\gamma)$, $w_j=y_{2j-1}+iy_{2j}$, $j=1,\ldots,n$, $\gamma=y_{2n+1}$, be another canonical coordinates on $D$. Let
\begin{equation}\label{e-gue160531a}
\frac{1}{n!}\Bigr((-\frac{1}{2\pi}d\omega_0)^n\wedge(-\omega_0)\Bigr)(y)=\Td a(y)dy_1\cdots dy_{2n+1}\ \ \mbox{on $D$},
\end{equation}
where $\Td a(y)\in C^\infty(D)$.
From \eqref{sp3-eVII}, we can check that
\begin{equation}\label{e-gue160531aI}
\Td a(y)\abs{\det\left(\frac{\pr H_j}{\pr z_k}\right)^n_{j,k=1}}^2=a(x),\ \, y=(w,\gamma),\ \ x=(z,\theta),\ \ w=H(z),\ \ \gamma=\theta+G(z).\end{equation}
From \eqref{e-gue160531II}, \eqref{e-gue160531aI} and 
\[\triangle_{\mathcal{L}}(\log \abs{\det\left(\frac{\pr H_j}{\pr z_k}\right)^n_{j,k=1}}^2)=0,\]
we can check that
$\triangle_{\mathcal{L}}(\log a(x))=\triangle_{\mathcal{L}}(\log\Td a(y))$ holds on $D$ and hence the rigid scalar curvature $S_{\mathcal{L}}(x)$ is well-defined with $S_{\mathcal{L}}(x)\in C^\infty(X)$, $TS_{\mathcal{L}}(x)=0$.

Let $S^\Theta_{\mathcal{L}}(x)$ and $\mathcal{R}^{\rm det\,}_\Theta(x)$ be as in \eqref{e-gue160601Im} and \eqref{e-gue160601IIm} respectively. We can repeat the procedure above with minor change and check that $S^\Theta_{\mathcal{L}}(x)$ and $\mathcal{R}^{\rm det\,}_\Theta(x)$ are well-defined.

The rigid Chern connection
\begin{equation}\label{e-gue160602}
\nabla^{T^{1,0}X}_{\mathcal{L}}:C^\infty(X,T^{1,0}X)\To C^\infty(X,\Complex T^*X\otimes T^{1,0}X)
\end{equation}
on $T^{1,0}X$ with respect to $\langle\,\cdot\,|\,\cdot\,\rangle_{\mathcal{L}}$ is defined as follows.  Let $(D,(z,\theta),\varphi)$ be a BRT trivialization and let $Z_j$, $j=1,\ldots,n$, be as in \eqref{e-can}. Set
\begin{equation}
h_{j,k}=h_{j,k}(z)=\langle\,\frac{\pr}{\pr z_j}+i\frac{\pr\varphi}{\pr z_j}\frac{\pr}{\pr\theta}\,|\,\frac{\pr}{\pr z_k}+i\frac{\pr\varphi}{\pr z_k}\frac{\pr}{\pr\theta}\,\rangle_{\mathcal{L}},\ \ j,k=1,\ldots,n.
\end{equation}
Put\begin{equation}\label{e-gue160601III}
h=h(z)=\left(h_{j,k}\right)^n_{j,k=1},
\ \ j, k=1,\ldots,n,
\end{equation}and
$h^{-1}=\left(h^{j,k}\right)^n_{j,k=1}$, where $h^{-1}$ is the inverse matrix of $h$.
Furthermore, set
\[\theta=\theta(z)=\partial_b h\cdot h^{-1}
=\left(\theta_{j,k}(z)\right)^n_{j,k=1},\ \ \theta_{j,k}(z)\in T^{*1,0}_xX,\ \ x=(z,\theta),\ \ j,k=1,\ldots,n.\]
Then, \begin{equation}\label{e-gue160602I}
\begin{split} \nabla^{T^{1,0}X}_{\mathcal{L}}:C^\infty(X,T^{1,0}X)&\To C^\infty(X,\Complex T^*X\otimes T^{1,0}X)\\U=\sum^n_{j=1}a_jZ_j&\To\sum^n_{j=1}da_j\otimes Z_j+\sum^n_{j,k=1}a_j\theta_{j,k}\otimes Z_k,\ \ a_j\in C^\infty(D),\ 1\leq j\leq n.
\end{split}
\end{equation}
It is straightforward to check that the definition of $\nabla^{T^{1,0}X}_{\mathcal{L}}$ is independent of the choice of BRT trivialization and hence it is globally defined. The rigid Chern curvature with respect to $\langle\,\cdot\,|\,\cdot\,\rangle_{\mathcal{L}}$ is given by \begin{equation} \label{s1-e13}
\begin{split} &R^{T^{1,0}X}_{\mathcal{L}}=\ddbar_b\theta=\left(\ddbar_b\theta_{j,k}\right)^n_{j,k=1}=\left(\mathcal{R}_{j,k}\right)^n_{j,k=1}\in C^\infty(X,T^{*1,1}X\otimes{\rm End\,}(T^{1,0}X)),\\ &R^{T^{1,0}X}_{\mathcal{L}}(\ol U, V)\in {\rm End\,}(T^{1,0}X),\ \ \forall U, V\in T^{1,0}X,\\ &R^{T^{1,0}X}_{\mathcal{L}}(\ol U,V)\xi=\sum^n_{j,k=1}\langle\,\mathcal{R}_{j,k}\,,\ol U\wedge V\,\rangle\xi_kZ_j,\ \ \xi=\sum^n_{j=1}\xi_jZ_j,\ \ U, V\in T^{1,0}X.
\end{split}
\end{equation}
It is straightforward to check that the definition of $R^{T^{1,0}X}_{\mathcal{L}}$ is independent of the choice of BRT trivialization and hence it is globally defined.



\subsection{Pseudohermitian geometry}\label{s-gue160603}
In this section we will prove that the the rigid scalar curvature $S_{\mathcal{L}}$ given by \eqref{e-gue160531I} and the rigid Chern curvature given by (\ref{s1-e13}) are just
Tanaka-Webster scalar curvature and Tanaka-Webster curvature respectively up to some constants. We recall Tanaka-Webster curvature and scalar curvature first.
For this moment, we do not assume that $(X,T^{1,0}X)$ has a transversal CR $S^1$ action, that is,  we only assume that $(X,T^{1,0}X)$ is a general orientable strongly pseudoconvex CR manifold of dimension $2n+1$, $n\geq1$. Let
\[HX:=\set{U\in TX;\, U=W+\ol W,\ \ \mbox{for some $W\in T^{1,0}X$}}.\]
Then $HX$ is a subbundle of $TX$ of dimension $2n$. Let $J:HX\To HX$ be the complex structure map given by
\[\begin{split}
J:HX&\To HX,\\
U=W+\ol W, W\in T^{1,0}X,&\To\sqrt{-1}W-\sqrt{-1}~\overline{W}.
\end{split}\]
Since $X$ is orientable, there is a $\theta_0\in C^\infty(X,T^*X)$ which annihilates exactly $HX$. Any such $\theta_0$ is called a pseudohermitian structure on $X$. Then there is a unique vector field $T\in C^\infty(X,TX)$ on $X$ such that
\[\theta_0(T)\equiv1,\ \ d\theta_0(T,\cdot)\equiv0.\]
The following is well-known

\begin{prop}[Proposition 3.1 in \cite{T75} ]\label{p-160327}
With the notations above, there is a unique linear connection (Tanaka-Webster connection) denoted by $\nabla: C^\infty(X,TX)\rightarrow C^\infty(X,T^\ast X\otimes TX)$ satisfying the following conditions:
\begin{enumerate}
  \item The contact structure $HX$ is parallel, i.e., $\nabla_U C^\infty(X,HX)\subset C^\infty(X,HX)$ for $U\in C^\infty(X,TX).$
  \item The tensor fields $T, J, d\theta_0$ are all parallel, i.e., $\nabla T=0, \nabla J=0, \nabla d\theta_0=0.$
  \item The torsion $\tau$ of $\nabla$ satisfies:
  ${\tau}(U, V)=d\theta_0(U, V)T$, ${\tau}(T, JU)=-J{\tau}(T, U)$,
  $U, V\in C^\infty(X, HX).$
\end{enumerate}
\end{prop}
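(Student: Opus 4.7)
The plan is the classical Tanaka--Webster argument: work in a local admissible frame, encode the three conditions as a finite linear system for the connection $1$-forms, solve that system uniquely, and glue by uniqueness. Both existence and uniqueness emerge simultaneously at the end.

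First, near each point I would pick a local admissible frame $T, Z_1,\ldots, Z_n$ for $T^{1,0}X$, together with $Z_{\bar\alpha} := \overline{Z_\alpha}$ and the dual coframe $\theta_0, \theta^1,\ldots,\theta^n, \theta^{\bar 1},\ldots,\theta^{\bar n}$, so that $d\theta_0 = i h_{\alpha\bar\beta}\,\theta^\alpha\wedge\theta^{\bar\beta}$ with $(h_{\alpha\bar\beta})$ positive definite. Conditions (a) and (b) force $HX$, $J$ and $T$ to be parallel, so any candidate $\nabla$ must have the shape $\nabla Z_\alpha = \omega_\alpha^{\ \beta}\otimes Z_\beta$, $\nabla Z_{\bar\alpha} = \overline{\omega_\alpha^{\ \beta}}\otimes Z_{\bar\beta}$, $\nabla T = 0$ for some complex $1$-forms $\omega_\alpha^{\ \beta}$, and the remaining piece $\nabla d\theta_0 = 0$ translates into the Hermitian-compatibility identity $dh_{\alpha\bar\beta} = \omega_\alpha^{\ \gamma} h_{\gamma\bar\beta} + \overline{\omega_\beta^{\ \gamma}}\,h_{\alpha\bar\gamma}$.

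Next, expressing the torsion in this frame yields Cartan's structure equations $d\theta^\beta = \theta^\alpha\wedge \omega_\alpha^{\ \beta} + \theta_0\wedge \tau^\beta$, and condition (c) becomes $\tau^\beta = A^\beta{}_{\bar\gamma}\theta^{\bar\gamma}$ together with symmetry of the lowered tensor $A_{\alpha\beta} := h_{\alpha\bar\sigma}\overline{A^{\bar\sigma}{}_\beta}$. I would then decompose each $\omega_\alpha^{\ \beta}$ into its $\theta^\gamma$-, $\theta^{\bar\gamma}$- and $\theta_0$-components and match equations component-wise: the $\theta^{\bar\gamma}\wedge\theta^{\bar\delta}$ part of $d\theta^\beta$ fixes the $(0,1)$-piece of $\omega_\alpha^{\ \beta}$, Hermitian compatibility then fixes its $(1,0)$-piece, and the $\theta_0$-coefficient (together with $A^\beta{}_{\bar\gamma}$) is determined by the symmetry constraint. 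Uniqueness is manifest at each step, existence follows by taking these explicit formulas as definition, and the local connections agree on overlaps by uniqueness, hence glue into a global $\nabla$.

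The main obstacle is the algebraic bookkeeping in this component-matching step: one has to check that the symmetric-torsion condition (c) supplies exactly the data that Hermitian compatibility leaves undetermined, so that the combined system has a unique solution. This is the pseudohermitian analogue of the Koszul identity pinning down Levi--Civita, and the invertibility of $(h_{\alpha\bar\beta})$ coming from strong pseudoconvexity is used precisely at that point.
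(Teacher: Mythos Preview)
Your sketch is the classical Tanaka--Webster existence/uniqueness argument and is essentially correct. However, there is nothing in the paper to compare it against: the paper does not prove this proposition at all. It is stated as a well-known result and attributed to Tanaka's 1975 monograph (``Proposition~3.1 in~\cite{T75}''), with no proof supplied; the authors only record the consequences they need (compatibility with the Webster metric, the local expression $\omega_\alpha^{\ \beta}=g^{\bar\sigma\beta}\partial_b g_{\alpha\bar\sigma}$ in a BRT frame) and move on. So your outline is not so much an alternative to the paper's proof as a reconstruction of the cited reference's argument.
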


Recall that $\nabla J\in C^\infty(X,T^\ast X\otimes{\rm End\,}HX)$,
$\nabla d\theta_0\in C^\infty(X,T^\ast X\otimes\Lambda^2(\Complex T^\ast X))$
are defined by $(\nabla_UJ)W=\nabla_U(JW)-J\nabla_UW$, where $U\in C^\infty(X,TX)$, $W\in C^\infty(X,HX)$, and
$\nabla_Ud\theta_0(W, V)=Ud\theta_0(W, V)-d\theta_0(\nabla_UW, V)-
d\theta_0(W, \nabla_UV)$ for $U, V, W\in C^\infty(X,TX)$.
By (a) and $\nabla J=0$ in (b), we have
$\nabla_U C^\infty(X,T^{1, 0}X)\subset C^\infty(X,T^{1, 0}X)$ and
$\nabla_U C^\infty(X,T^{0, 1}X)\subset C^\infty(X,T^{0, 1}X)$ for
$U\in C^\infty(X,TX).$ Moreover, $\nabla J=0$ and $\nabla d\theta_0=0$
imply that the Tanaka-Webster connection is compatible with the Webster metric. That is,
\begin{equation}\label{e-gue160922}
Ud\theta_0(W, V)=d\theta_0(\nabla_UW, V)+
d\theta_0(W, \nabla_UV),\ \ \forall U, V, W\in C^\infty(X,TX).
\end{equation}
By definition, the torsion of $\nabla$ is given by
$\tau (W, U)=\nabla_WU-\nabla_UW-[W, U]$ for $U, V\in C^\infty(X,TX)$ and
$\tau(T, U)$ for $U\in C^\infty(X,HX)$ is called pseudohermitian torsion.

Let $\{Z_\alpha\}_{\alpha=1}^{n}$ be a local  frame of $T^{1, 0}X$  and let $\set{\theta^\alpha}_{\alpha=1}^{n}$ be the dual frame of $\{Z_\alpha\}_{\alpha=1}^{n}$. Write $Z_{\overline\alpha}=\overline{Z_\alpha}$, $\theta^{\overline\alpha}=\overline{\theta^\alpha}.$ Write
\[\nabla Z_\alpha=\omega_{\alpha}^\beta \otimes Z_\beta,\ \ \nabla Z_{\overline\alpha}=\omega_{\overline\alpha}^{\overline\beta} \otimes Z_{\overline\beta},\ \ \nabla T=0.\]
We call $\omega^{\beta}_{\alpha}$ the connection form of Tanaka-Webster connection with respect to the frame $\{Z_\alpha\}_{\alpha=1}^{n}.$ We denote by $\Theta_{\alpha}^\beta$ the Tanaka-Webster curvature form. We have $\Theta_{\alpha}^\beta=d\omega_{\alpha}^\beta-\omega_{\alpha}^\gamma\wedge\omega_\gamma^\beta.$ It is easy to check that
\begin{equation}\label{16-10-04-2}
\Theta_{\alpha}^\beta=R_{\alpha\ j\overline k}^{\ \ \beta}\theta^j\wedge\theta^{\overline k}+A_{\alpha\ j k}^{\ \ \beta}\theta^j\wedge\theta^{ k}+B_{\alpha\ j k}^{\ \ \beta}\theta^{\ol j}\wedge\theta^{\ol{k}}+C\wedge\theta_0,\ \ \mbox{$C$ is a one form}.
\end{equation}
We call $R_{\alpha\ j\overline k}^{\ \ \beta}$ the pseudohermitian curvature tensor and its trace
\begin{equation}\label{e-gue160923}
R_{\alpha\overline k}:=\sum_{j=1}^nR_{\alpha\ j\overline k}^{\ \ j}
\end{equation}
is called pseudohermitian Ricci tensor.  Write $d\theta_0=ig_{\alpha\overline\beta}\theta^\alpha\wedge\theta^{\overline\beta}$.
Let $\{g^{\overline\sigma\beta}\}$ be the inverse matrix of $\{g_{\alpha\overline\beta}\}$. The Tanaka-Webster Scalar curvature $R$ with respect to the pseudohermitian structure$\theta_0$ is given by
\begin{equation}\label{e-gue160923I}
R=g^{\overline k\alpha}R_{\alpha\overline k}.
\end{equation}

\begin{thm}\label{t-gue161012r}
Let $R(x)\in C^\infty(X)$ denotes Tanaka-Webster Scalar curvaturethe  on $X$ with respect to the pseudohermitian structure $\theta_0=-d\omega_0$. Then, $S_{\mathcal L}(x)=4\pi R(x)$, for every $x\in X$, where $S_{\mathcal{L}}$ denotes the rigid Scalar curvature given by \eqref{e-gue160531Im}.
\end{thm}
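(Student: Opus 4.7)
The strategy is a local computation in a BRT trivialization $(D,(z,\theta),\varphi)$, since $S_{\mathcal L}$ and $R$ are both globally defined smooth functions on $X$ and BRT charts cover $X$. Taking $\theta_0=-\omega_0$, one has $T=\partial_\theta$, $Z_j=\partial_{z_j}+i\varphi_{z_j}\partial_\theta$, and a direct check from the defining relations gives $\omega_0=-d\theta+i(\partial-\bar\partial)\varphi$ and $d\theta_0=-d\omega_0=2i\,\partial\bar\partial\varphi$. Consequently the Tanaka--Webster matrix in the frame $\{Z_j\}$ is $g_{j\bar k}=2\varphi_{j\bar k}$, while the rigid Levi metric is $h_{j\bar k}=\langle Z_j\,|\,Z_k\rangle_{\mathcal L}=\varphi_{j\bar k}/\pi$, so $g=2\pi h$; the dual metric on one-forms is then $\langle dz_j\,|\,dz_k\rangle_{\mathcal L}=\pi\varphi^{\bar k j}$, and plugging into \eqref{s1-e9m} yields
\[
\triangle_{\mathcal L}=-2\pi\sum_{j,k}\varphi^{\bar k j}Z_j\overline{Z_k},
\]
which on any rigid function $f(z)$ reduces to $-2\pi\varphi^{\bar k j}\partial_{z_j}\partial_{\bar z_k}f$.

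Next I would compute the volume density from \eqref{e-gue160531}. Using $(-\tfrac{1}{2\pi}d\omega_0)^n/n!=(i/\pi)^n\det(\varphi_{j\bar k})\,dz_1\wedge d\bar z_1\wedge\cdots\wedge dz_n\wedge d\bar z_n$, wedging with $-\omega_0$ (only the $d\theta$-piece contributes to the top form), and converting $dz_j\wedge d\bar z_j=-2i\,dx_{2j-1}\wedge dx_{2j}$, I obtain $a(x)=(2/\pi)^n\det(\varphi_{j\bar k}(z))$. In particular $\log a$ is rigid and equals $\log\det(\varphi_{j\bar k})$ modulo a constant, so
\[
S_{\mathcal L}=\triangle_{\mathcal L}(\log a)=-2\pi\sum_{j,k}\varphi^{\bar k j}\partial_{z_j}\partial_{\bar z_k}\log\det(\varphi_{\alpha\bar\beta}).
\]
On the other side, the key ingredient is that in BRT coordinates the Tanaka--Webster Ricci tensor coincides with the Kähler Ricci tensor of $g$,
\[
R_{\alpha\bar k}=-\partial_{z_\alpha}\partial_{\bar z_k}\log\det(g_{\gamma\bar\delta})=-\partial_{z_\alpha}\partial_{\bar z_k}\log\det(\varphi_{\gamma\bar\delta}).
\]
Granting this, tracing with $g^{\bar k\alpha}=\tfrac12\varphi^{\bar k\alpha}$ gives $R=-\tfrac12\varphi^{\bar k\alpha}\partial_{z_\alpha}\partial_{\bar z_k}\log\det(\varphi_{j\bar k})$, and hence $4\pi R=S_{\mathcal L}$, as desired.

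The only nontrivial step left is the identification of $\nabla$ on $T^{1,0}X$ with the Chern connection of $g$ lifted through the BRT frame, i.e.\ $\omega_\alpha^{\,\beta}=g^{\bar\delta\beta}\partial g_{\alpha\bar\delta}$ (pure $(1,0)$), $\omega_{\bar\alpha}^{\,\bar\beta}=\overline{\omega_\alpha^{\,\beta}}$, and $\nabla T=0$. Properties (a)--(b) of Proposition~\ref{p-160327} are immediate from the shape of $\omega$; the torsion axiom (c) is the subtle one, and it comes out exactly right because a short BRT bracket computation yields $[Z_j,\overline{Z_k}]=-2i\varphi_{j\bar k}T=-ig_{j\bar k}T$, so that
\[
\tau(Z_j,\overline{Z_k})=\nabla_{Z_j}\overline{Z_k}-\nabla_{\overline{Z_k}}Z_j-[Z_j,\overline{Z_k}]=-[Z_j,\overline{Z_k}]=ig_{j\bar k}T=d\theta_0(Z_j,\overline{Z_k})T,
\]
as required. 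Once $\nabla$ is identified in this way, $R_{\alpha\bar k}=-\partial_{z_\alpha}\partial_{\bar z_k}\log\det g$ is the standard Kähler formula and the result follows. The main obstacle is really this Chern/Tanaka--Webster identification; beyond it everything reduces to algebra, modulo careful tracking of the factors of $2$ and $2\pi$ relating $g_{j\bar k}$, $h_{j\bar k}$, and $\varphi_{j\bar k}$.
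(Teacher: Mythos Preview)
Your proposal is correct and follows the same overall strategy as the paper: work in a BRT chart, identify the Tanaka--Webster connection forms with the Chern connection forms $\omega_\alpha^{\ \beta}=g^{\bar\sigma\beta}\partial g_{\alpha\bar\sigma}$ of the K\"ahler metric $g_{j\bar k}=2\varphi_{j\bar k}$ on the base, and compare the two scalar curvatures. The difference lies in how the comparison is executed. The paper fixes a point $x_0$, chooses coordinates so that $g_{\alpha\bar\beta}(x_0)=\lambda_\alpha\delta_\alpha^{\ \beta}$ and $\partial g(x_0)=0$, and then evaluates both $R(x_0)$ and $S_{\mathcal L}(x_0)$ as explicit double sums of fourth derivatives of $\varphi$, matching them numerically. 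You instead invoke the intrinsic K\"ahler identity $R_{\alpha\bar k}=-\partial_{z_\alpha}\partial_{\bar z_k}\log\det g$ and recognize both $S_{\mathcal L}$ and $4\pi R$ directly as $-2\pi\,\varphi^{\bar k j}\partial_{z_j}\partial_{\bar z_k}\log\det(\varphi_{\alpha\bar\beta})$, which cleanly bypasses the pointwise normal-coordinate computation. One small completeness remark: your use of the K\"ahler Ricci formula is justified exactly because the extra $A$, $B$, $C$ terms in the curvature decomposition \eqref{16-10-04-2} vanish; the paper cites Webster's vanishing-torsion theorem for this, whereas in your framework it follows from your own BRT computation $\tau(T,Z_j)=0$ (together with $g$ coming from a potential, hence K\"ahler on the base), and you should say so explicitly rather than leave it implicit.
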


\begin{proof}
Let $\{Z_\alpha\}_{\alpha=1}^{n}$ be the local frame of $T^{1, 0}X$  given by some BRT trivialization (see Theorem \ref{t-gue150514}). Then $\{dz_\alpha\}_{\alpha=1}^{n}$ is a dual frame of $\{Z_\alpha\}_{\alpha=1}^{n}$. Write $Z_{\overline\alpha}=\overline{Z_\alpha}$, $\theta^\alpha=dz_{\alpha}$, $\theta^{\overline\alpha}=\overline{\theta^\alpha}.$ From (\ref{e-gue160502b}) and \eqref{e-can}, we have $d(-\omega_0)=ig_{\alpha\overline\beta}\theta^\alpha\wedge\theta^{\overline\beta}$, where $g_{\alpha\overline\beta}=2\frac{\partial^2\varphi(z)}{\partial z_\alpha\partial\overline z_\beta}.$ As before, write
$$\nabla Z_\alpha=\omega_{\alpha}^\beta \otimes Z_\beta, \nabla Z_{\overline\alpha}=\omega_{\overline\alpha}^{\overline\beta} \otimes Z_{\overline\beta}, \nabla T=0.$$
From \eqref{e-gue160922} and by direct calculation we find
$ \omega_{\alpha}^\beta=g^{\overline\sigma\beta}\partial_bg_{\alpha\overline\sigma} $, where $\{g^{\overline\sigma\beta}\}$ is the inverse matrix of $\{g_{\alpha\overline\beta}\}$.
Since $T$ is an infinitesimal CR automorphism, Webster \cite{W78} showed that the pseudohermitian torsion vanishes on $X$, thus (\ref{16-10-04-2}) becomes
\begin{equation}
\Theta_{\alpha}^\beta=R_{\alpha\ j\overline k}^{\ \ \beta}\theta^j\wedge\theta^{\overline k}.
\end{equation}
Again, a direct calculation leads to $h_{j, k}=\frac{1}{2\pi}g_{j\overline k}$ and
\begin{equation}
R_{\alpha\ j\overline k}^{\ \ \beta}\theta^j\wedge\theta^{\overline k}=-2g^{\overline\sigma\beta}\frac{\partial^4\varphi(z)}{\partial z_\alpha\partial\overline z_\sigma\partial z_j\partial\overline z_k}dz_j\wedge d\overline z_k-\frac{\partial g_{\alpha\overline\sigma}}{\partial z_j}\frac{\partial g^{\overline\sigma\beta}}{\partial \overline z_k}dz_j\wedge d\overline z_k.
\end{equation}
Then we get \begin{equation}\label{16-10-04-1}
\theta_{j, k}=\partial_b h_{j, l}\cdot h^{l, k}=\partial_b g_{j \overline l}\cdot g^{\overline l k}=\omega_{j}^k.
\end{equation}
Thus, from (\ref{s1-e13}) and (\ref{16-10-04-1}) we have  $$\mathcal R_{j, k}=\overline\partial_b{\theta_{j, k}}=\overline\partial_b(\partial_b g_{j \overline l}\cdot g^{\overline l k})=g^{\overline l k}\overline\partial_b\partial_b g_{j\overline l}-\partial_b g_{j\overline l}\wedge \overline\partial_b g^{\overline lk}=R_{j\ \alpha\overline \beta}^{\ \ k}\theta^\alpha\wedge\theta^{\overline \beta}=\Theta_{j}^k.$$
Now given any point $x_0\in X$ we can choose coordinates $(z_1, \cdots, z_n, \theta)$ in the BRT trivialization such that
\begin{equation}\label{normal1}
g_{\alpha\overline\beta}(x_0)=\lambda_\alpha\delta_{\alpha}^\beta, ~~
\frac{\partial g_{\alpha\overline\sigma}}{\partial z_j}(x_0)=\frac{\partial g_{\sigma\overline\beta}}{\partial \overline z_k}(x_0)=0
\end{equation}
holds. Thus, at $x_0$ one has $R_{\alpha\ j\overline k}^{\ \ \beta}(x_0)=(-2)\frac{1}{\lambda_{\beta}}\frac{\partial^4\varphi(z)}{\partial z_\alpha\partial\overline z_\beta\partial z_j\partial\overline z_k}\Big|_{x_0}$. The pseudohermitian Ricci tensor is given by \begin{equation}\label{Rcci1}
R_{\alpha\overline k}(x_0)=\sum_{j=1}^nR_{\alpha\ j\overline k}^{\ \ j}(x_0) =(-2)\sum_{j=1}^n\frac{1}{\lambda_{j}}\frac{\partial^4\varphi(z)}{\partial z_\alpha\partial\overline z_j\partial z_j\partial\overline z_k}\Big|_{x_0}.
\end{equation}
From (\ref{Rcci1}), the Tanaka-Webster scalar curvature is given by
\begin{equation}\label{cal3}
R(x_0)=g^{\overline k\alpha}(x_0)R_{\alpha\overline k}(x_0)=(-2)\sum_{j, k=1}^n\frac{1}{\lambda_j}\frac{1}{\lambda_k}\frac{\partial^4\varphi(z)}{\partial z_k\partial\overline z_j\partial z_j\partial\overline z_k}\Big|_{x_0}.
\end{equation}
From (\ref{e-gue160531}), by direct calculation, one finds $a(x)=\frac{1}{\pi^n}\det{g_{\alpha\overline\beta}}$ on $D$ and $a(x_0)=\frac{\lambda_1\cdots\lambda_n}{\pi^n}.$ By definition,
\begin{equation}
S_{\mathcal{L}}(x_0)=\triangle_{\mathcal{L}}(\log a(x))\Big|_{x_0}=(-2)(2\pi)g^{\overline kj}\frac{\partial^2}{\partial z_j\partial\overline z_k}\log a(x)\Big|_{x_0}=(-2)(2\pi)\sum_{j=1}^n\frac{1}{\lambda_j}
\frac{\partial^2}{\partial z_j\partial\overline z_j}\log a(x)\Big|_{x_0}.
\end{equation}
From (\ref{normal1}) and by direct calculation, we have
\begin{equation}\label{cal1}
\frac{\partial^2}{\partial z_j\partial\overline z_j}\log a(x)\Big|_{x_0}
=\frac{1}{a(x_0)}\frac{\partial^2}{\partial z_j\partial\overline z_j}a(x)\Big|_{x_0}
\end{equation}
and
\begin{equation}\label{cal2}
\begin{split}
\frac{\partial^2}{\partial z_j\partial\overline z_j}a(x)\Big|_{x_0}
&=\frac{1}{\pi^n}\frac{\partial^2\det{g_{\alpha\overline\beta}}}{\partial z_j\partial\overline z_j}\Big|_{x_0}=\frac{1}{\pi^n}\sum_{k=1}^n\frac{\partial^2 g_{k\overline k}}{\partial z_j\partial\overline z_j}\frac{\det{g_{\alpha\overline\beta}}}{g_{k\overline k}}\Big|_{x_0}\\
&=\frac{2}{\pi^n}\sum_{k=1}^n\frac{\partial^4\varphi}
{\partial z_j\partial\overline z_j\partial z_k\partial \overline z_k}(x_0)\frac{\lambda_1\cdots\lambda_n}{\lambda_k}.
\end{split}
\end{equation}
Thus from (\ref{cal1}) and (\ref{cal2}), we have
\begin{equation}\label{cal4}
S_{\mathcal{L}}(x_0)=-8\pi\sum_{j, k=1}^n\frac{1}{\lambda_j}\frac{1}{\lambda_k}\frac{\partial^4\varphi(z)}{\partial z_k\partial\overline z_j\partial z_j\partial\overline z_k}\Big|_{x_0}.
\end{equation}
Combining (\ref{cal3}) and (\ref{cal4}), we conclude
\begin{equation}
S_{\mathcal L}(x_0)=4\pi R(x_0).
\end{equation}
\end{proof}

\section{Szeg\H{o} kernel asymptotic expansion}\label{s-gue160822}

In this section, we will prove Theorem~\ref{t-gue160605} and Theorem~\ref{t-gue160605I}. The section is organized as follows.
We start by recalling and proving some basic facts about the Kohn Laplacian and its restriction to equivariant functions (see Section~\ref{s-gue160822I}). In Section~\ref{s-gue160830} we adapt some results on local Bergman kernel expansion to the BRT trivialized setting. In Section~\ref{s-gue160903} we use these adapted results to get an approximation for the \(m\)-th Fourier component of the Szeg\H{o} kernel. Using this approximation, we prove Theorem~\ref{t-gue160605} and Theorem~\ref{t-gue160605I} in Section~\ref{s-gue160905}.
\subsection{Kohn Laplacians}\label{s-gue160822I}

Since $T\ddbar_b=\ddbar_bT$, we have
\[\ddbar_{b,m}:=\ddbar_b:C^\infty_m(X)\To\Omega^{0,1}_m(X),\ \ \forall m\in\mathbb Z.\]
We also write
\[\ol{\pr}^{*}_b:\Omega^{0,1}(X)\To C^\infty(X)\]
to denote the formal adjoint of $\ddbar_b$ with respect to $(\,\cdot\,|\,\cdot\,)$. Since $\langle\,\cdot\,|\,\cdot\,\rangle$ is rigid, we can check that
\begin{equation}\label{e-gue150517i}
\begin{split}
&T\ddbar^{*}_b=\ddbar^{*}_bT\ \ \mbox{on $\Omega^{0,1}(X)$},\\
&\ddbar^{*}_{b,m}:=\ddbar^{*}_b:\Omega^{0,1}_m(X)\To C^\infty_m(X),\ \ \forall m\in\mathbb Z.
\end{split}
\end{equation}
The Kohn Laplacian for functions is given by
\begin{equation}\label{e-gue151113yq}
\Box_{b}:=\ddbar^*_{b}\ddbar_{b}:C^\infty(X)\To C^\infty(X).
\end{equation}

Now, we fix $m\in\mathbb Z$. The $m$-th Fourier component of Kohn Laplacian for functions is given by
\begin{equation}\label{e-gue151113y}
\Box_{b,m}:=\ddbar^*_{b,m}\ddbar_{b,m}:C^\infty_m(X)\To C^\infty_m(X).
\end{equation}
We extend $\Box_{b,m}$ to $L^{2}_{m}(X)$ by
\begin{equation}\label{e-gue151113yI}
\Box_{b,m}:{\rm Dom\,}\Box_{b,m}\subset L^{2}_m(X)\To L^{2}_m(X)\,,
\end{equation}
where ${\rm Dom\,}\Box_{b,m}:=\{u\in L^{2}_m(X);\, \Box_{b,m}u\in L^{2}_m(X)\}$ and for any $u\in L^{2}_m(X)$, $\Box_{b,m}u$ is defined in the sense of distribution.
It is well-known that $\Box_{b,m}$ is self-adjoint, ${\rm Spec\,}\Box_{b,m}$ is a discrete subset of $[0,\infty[$ and for every $\nu\in{\rm Spec\,}\Box_{b,m}$, $\nu$ is an eigenvalue of $\Box_{b,m}$ (see Section 3 in~\cite{CHT}). The following is well-known (see Theorem 3.10 in~\cite{HH16})

\begin{thm}\label{t-gue160822a}
Let $\mu_m$ be the lowest non-zero eigenvalue of $\Box_{b,m}$. There exist constants $c_1>0, c_2>0$ not depending on $m$ such that for $m\in\mathbb N$, we have
\begin{equation}\label{e-gue160822a}
\mu_m \ge c_1m-c_2.
\end{equation}
\end{thm}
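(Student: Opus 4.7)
My plan is to establish the lower bound for the Kohn Laplacian on $(0,1)$-forms and then transfer it back to functions by a simple spectral correspondence. Let $\Box_b^{(1)} := \ddbar_b \ddbar_b^* + \ddbar_b^* \ddbar_b$ act on $\Omega^{0,1}(X)$; by \eqref{e-gue160527} and \eqref{e-gue150517i} it preserves $\Omega^{0,1}_m(X)$ for every $m$. If $u \in C^\infty_m(X)$ is an eigenfunction of $\Box_{b,m}$ with eigenvalue $\mu \neq 0$, then $v := \ddbar_b u \in \Omega^{0,1}_m(X)$ is nonzero (else $\mu u = \Box_{b,m} u = 0$), and
\[
\Box_b^{(1)} v = \ddbar_b \ddbar_b^* \ddbar_b u = \ddbar_b \Box_{b,m} u = \mu v.
\]
Hence every nonzero eigenvalue of $\Box_{b,m}$ is an eigenvalue of $\Box_b^{(1)}$ on $\Omega^{0,1}_m(X)$, and it suffices to prove $(\Box_b^{(1)} v \,|\, v) \geq (c_1 m - c_2) \|v\|^2$ for all $v \in \Omega^{0,1}_m(X)$ with constants independent of $m$.

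I would prove this via a Bochner--Kohn--Rossi identity in each BRT chart $(D, (z,\theta), \varphi)$. Choosing a local orthonormal frame $Z_1, \ldots, Z_n$ of $T^{1,0}X$ for $\langle\,\cdot\,|\,\cdot\,\rangle$ with dual orthonormal coframe $\eta^1, \ldots, \eta^n$ of $T^{*0,1}X$, and expanding $v = \sum_k v_k \eta^k$ in $D$, integration by parts using $\ol{Z_k}^* = -Z_k + (\text{zeroth order})$ and commutation of the $Z_j$'s with the $\ol{Z_k}$'s yields
\[
\|\ddbar_b v\|^2 + \|\ddbar_b^* v\|^2 = \sum_{j,k} \|\ol{Z_j} v_k\|^2 + \sum_{j,k} ([Z_j, \ol{Z_k}] v_k \,|\, v_j) + O(\|v\|^2),
\]
where the error term collects lower-order contributions from the failure of $\{\eta^k\}$ to be closed under $\ddbar_b$, from adjoint corrections, and from Cauchy--Schwarz absorptions. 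The crucial point is that transversality together with \eqref{e-gue160502b} gives $[Z_j, \ol{Z_k}] \equiv -2i\mathcal{L}(Z_j, \ol{Z_k})\,T$ modulo horizontal vectors in $T^{1,0}X \oplus T^{0,1}X$, and $T$ acts on $\Omega^{0,1}_m(X)$ as multiplication by $im$, so the commutator sum contributes $2m \sum_{j,k} \mathcal{L}(Z_j, \ol{Z_k}) (v_k \,|\, v_j)$ modulo lower order.

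By strong pseudoconvexity the Levi form is uniformly positive definite on $X$ with smallest eigenvalue $\lambda_0 > 0$; hence the commutator contribution dominates $2\lambda_0 m \|v\|^2$ pointwise. Patching local Bochner identities over a finite BRT cover with a partition of unity yields the global estimate
\[
(\Box_b^{(1)} v \,|\, v) \geq 2\lambda_0 m \|v\|^2 - C \|v\|^2, \qquad v \in \Omega^{0,1}_m(X),
\]
with $C$ independent of $m$. Applied to $v = \ddbar_b u$ for an eigenfunction $u$ of $\Box_{b,m}$ with eigenvalue $\mu$, this gives $\mu \geq 2\lambda_0 m - C$, which is the desired estimate with $c_1 = 2\lambda_0$ and $c_2 = C$.

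The main technical obstacle is ensuring that all lower-order contributions stay $m$-independent. These come from three sources: the non-closedness of $\{\eta^k\}$ under $\ddbar_b$, the zeroth-order part of $\ol{Z_k}^*$, and the commutators of $\ddbar_b$ and $\ddbar_b^*$ with the partition-of-unity cutoffs. Each is a first- or zeroth-order operator in $v$ whose coefficients depend only on the rigid data (the BRT frames, $\varphi$, and $\langle\,\cdot\,|\,\cdot\,\rangle$), hence are independent of $m$; after Cauchy--Schwarz they produce terms $\varepsilon \sum \|\ol{Z_j} v_k\|^2 + C_\varepsilon \|v\|^2$ which are absorbed by choosing $\varepsilon$ small. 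This rigidity, built into the setting by the transversal CR $S^1$ action, is precisely what makes the constants $c_1, c_2$ in the final estimate uniform in $m$.
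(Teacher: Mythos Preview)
The paper does not supply its own proof of this theorem: it states the result as ``well-known'' and cites Theorem~3.10 of \cite{HH16}. There is therefore no argument in the paper to compare against.

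Your sketch is the standard Morrey--Kohn--H\"ormander approach and is correct in outline. The reduction to $(0,1)$-forms via $v=\ddbar_b u$ is valid, and the Bochner--Kodaira--Nakano identity in a BRT chart produces exactly the Levi-form term you describe: $[Z_j,\ol{Z_k}]\equiv -2i\mathcal{L}(Z_j,\ol{Z_k})\,T$ modulo $T^{1,0}X\oplus T^{0,1}X$, and on $\Omega^{0,1}_m(X)$ this contributes $2m\sum_{j,k}\mathcal{L}(Z_j,\ol{Z_k})(v_k\,|\,v_j)\geq 2\lambda_0 m\,\|v\|^2$ by strong pseudoconvexity and compactness. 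Your accounting of the lower-order terms (frame corrections, adjoint corrections, cutoff commutators) is accurate, and the rigidity of the metric is indeed what keeps them $m$-independent. One small point worth making explicit if you write this out in full: the ``horizontal'' part of $[Z_j,\ol{Z_k}]$ contributes first-order terms in $v$, not zeroth-order, so absorbing them requires the $\varepsilon\sum\|\ol{Z_j}v_k\|^2+C_\varepsilon\|v\|^2$ trick you mention rather than a direct bound by $\|v\|^2$; you already say this, but it is the one place where a careless reader might object.
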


Let $S_m:L^2(X)\To{\rm Ker\,}\Box_{b,m}$ be the orthogonal projection and let $S_m(x,y)\in C^\infty(X\times X)$ be the distribution kernel of $S_m$. It is clear that $S_m(x,x)=S_m(x)$, for every $x\in X$, where $S_m(x)\in C^\infty(X)$ is given by \eqref{e-gue160605}.

For $s\in\mathbb Z$, let $H^{s}(X)$ denote the Sobolev space for functions on $X$
of order $s$ and let $\norm{\cdot}_{s}$ denote the standard Sobolev norm of order $s$ with respect to $(\,\cdot\,|\,\cdot\,)$. First, we need

\begin{lem}\label{l-gue160827}
For every $s\in\mathbb N_0$, there are constants $C_{s}>0$ and $N_{s}>0$ independent of $m$ such that
\begin{equation}\label{e-gue160827}
\norm{S_{m}u}_{2s}\leq C_{s}m^{N_{s}}\norm{u}_{-2s}\ \ ~\text{for all}~ u\in H^{-2s}(X).
\end{equation}
\end{lem}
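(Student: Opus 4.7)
The plan is to reduce \eqref{e-gue160827}, by a duality argument based on the self-adjointness of \(S_m\), to the a priori bound
\[
\|v\|_{2s}\leq C_s\, m^{N_s}\|v\|_0\qquad\text{for every } v\in\Ker\Box_{b,m},
\]
and then to establish this a priori bound by a direct local computation in BRT trivializations. Granting the bound, for \(u\in H^{-2s}(X)\) set \(v:=S_m u\); since \(v\in\Ker\Box_{b,m}\) and \(S_m^{*}=S_m\),
\[
\|v\|_0^2=(S_m u,v)=(u,S_m v)=(u,v)\leq \|u\|_{-2s}\|v\|_{2s}\leq C_s m^{N_s}\|u\|_{-2s}\|v\|_0,
\]
so \(\|S_m u\|_0\leq C_s m^{N_s}\|u\|_{-2s}\); applying the a priori bound once more to \(v=S_m u\) yields \(\|S_m u\|_{2s}\leq C_s^{2}m^{2N_s}\|u\|_{-2s}\), which is \eqref{e-gue160827}.

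To prove the a priori bound, fix \(v\in\Ker\Box_{b,m}\); since \(\Box_{b,m}=\ddbar_b^{*}\ddbar_b\) on \(C^\infty_m(X)\), we have \(\ddbar_b v=0\). On any BRT trivialization \((D,(z,\theta),\varphi)\) from the fixed atlas, the local expression \eqref{e-gue150514f} for \(\ddbar_b\) together with \(Tv=imv\) forces the explicit representation
\[
v|_D=e^{im\theta-m\varphi(z)}f(z),\qquad f\text{ holomorphic in }z.
\]
Using \(|\pr^\gamma_z e^{-m\varphi}|\leq C_\gamma m^{|\gamma|}e^{-m\varphi}\), Leibniz's rule then gives for every multi-index \(\alpha\)
\[
|\pr^\alpha_x v(z,\theta)|^2\leq C_\alpha\sum_{|\beta|\leq|\alpha|}m^{2(|\alpha|-|\beta|)}\,|\pr^\beta_z f(z)|^2\,e^{-2m\varphi(z)}.
\]
The derivatives of \(f\) are controlled by \(f\) itself via Cauchy's integral formula on polydiscs \(B(z_0,r)\); choosing \(r\sim 1/m\)---so that the Lipschitz bound on \(\varphi\) yields \(e^{2m|\varphi(z)-\varphi(z_0)|}=O(1)\) on \(B(z_0,r)\)---one obtains
\[
|\pr^\beta_z f(z_0)|^2 e^{-2m\varphi(z_0)}\leq C_\beta\, m^{2|\beta|+2n}\int_{B(z_0,1/m)}|v|^2\,dV.
\]
Integrating in \(z_0\) and invoking Fubini gives \(\int_D|\pr^\beta_z f|^2 e^{-2m\varphi}\,dV\leq C_\beta m^{2|\beta|}\|v\|^2_{L^2(D)}\). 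Substituting back into the previous display and summing over a fixed finite BRT cover of \(X\) with a subordinate partition of unity produces \(\|v\|_{s}\leq C_s m^s\|v\|_0\) for each \(s\in\mathbb N_0\), i.e.\ the a priori bound with \(N_s=2s\); together with the duality step this gives the lemma with \(N_s=4s\).

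The main obstacle is the scale choice in the Cauchy estimate: any radius substantially larger than \(1/m\) makes the weight-oscillation factor \(e^{2m|\varphi(z)-\varphi(z_0)|}\) grow faster than any polynomial in \(m\), so the tight pairing of \(r\sim 1/m\) with the Lipschitz bound on \(\varphi\) (uniform by compactness of \(X\)) is essential. Beyond that, the argument is routine bookkeeping, and one could obtain a better exponent \(N_s\) by using the weighted sub-mean value inequality for \(|f|^2 e^{-2m\varphi}\) in place of the naive Cauchy estimate; we do not need the sharp exponent here.
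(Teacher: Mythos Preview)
Your proof is correct. The overall architecture coincides with the paper's: both first establish the smoothing bound $\|S_m u\|_{2s}\leq C_s m^{N_s}\|u\|_0$ (equivalently, your a priori estimate on $\Ker\Box_{b,m}$), and then upgrade to \eqref{e-gue160827} by the same duality/idempotence trick using $S_m^*=S_m$ and $S_m^2=S_m$.

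The genuine divergence is in how the a priori estimate is obtained. The paper argues abstractly: it observes that $\Box_b-T^2$ is elliptic, applies G\aa rding's inequality $\|S_m u\|_{s+2}\leq C_s m^{N_s}(\|(\Box_{b,m}-T^2)S_m u\|_s+\|S_m u\|_s)$, and then iterates using $\Box_{b,m}S_m=0$ and $T^2S_m u=-m^2 S_m u$. You instead exploit the explicit BRT representation $v=e^{im\theta-m\varphi}f$ with $f$ holomorphic, and run Cauchy/sub-mean-value estimates at scale $r\sim 1/m$, which is more elementary and yields the explicit exponent $N_s=4s$. The paper's route is more robust (it would extend to $(0,q)$-forms or to situations where no such clean local holomorphic factorisation is available) and needs no scale-balancing; your route avoids invoking elliptic regularity as a black box and makes the $m$-dependence completely transparent. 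Both are perfectly adequate here.

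One small point to make fully explicit when you write this up: the Cauchy estimate requires $B(z_0,1/m)\subset U_j$, so the inequality $\int|\pr^\beta_z f|^2 e^{-2m\varphi}\leq C_\beta m^{2|\beta|}\|v\|^2_{L^2}$ is literally valid only on compact subsets $K\Subset U_j$ (for $m$ large); since your partition of unity has compact supports this is exactly what is needed, but it is worth saying so.
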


\begin{proof}
By G\"arding inequality, for every $s\in\mathbb N_0$, it is easy to see that there are constants $C_s>0$ and $N_s>0$ independent of $m$ such that
\begin{equation}\label{e-gue160827I}
\|S_{m}u\|_{s+2}\leq C_{s}m^{N_s}\Bigr(\|(\Box_{b,m}-T^2)S_{m}u\|_{s}+
\|S_{m}u\|_{s}\Bigr),\ \ \forall ~u\in L^2(X).
\end{equation}
From \eqref{e-gue160827I}, by using induction and notice that
\[\begin{split}
&T^2S_{m}u=-m^2S_{m}u,\ \ \forall u\in L^2(X),\\
&\norm{S_{m}u}\leq\norm{u},\ \ \forall u\in L^2(X),
\end{split}\]
it is straightforward to see that for every $s\in\mathbb N_0$, there are constants $\Td C_s>0$ and $\Td N_s>0$ independent of $m$ such that
\begin{equation}\label{e-gue160827II}
\norm{S_{m}u}_{2s}\leq \Td C_{s,m}m^{\Td N_s}\norm{u},\ \ \forall u\in L^2(X).
\end{equation}
From \eqref{e-gue160827II}, we deduce that
\begin{equation}\label{e-gue160827III}
\begin{split}
&S_m:H^{-2s}X\To L^2(X),\\
&\norm{S_{m}u}\leq \Td C_{s,m}m^{\Td N_s}\norm{u}_{-2s},\ \ \forall u\in H^{-2s}(X).
\end{split}
\end{equation}
Fix $s\in\mathbb N_0$. From \eqref{e-gue160827II} and \eqref{e-gue160827III}, we have
\[
\begin{split}
\norm{S_{m}u}_{2s}=\norm{S_{m}S_{m}u}_{2s}\leq\Td C_{s_0}m^{\Td N_{s}}\norm{S_{m}u}\leq(\Td C_{s})^2m^{2\Td N_{s}}\norm{u}_{-2s},\ \ \forall u\in H^{-2s}(X).
\end{split}
\]
The lemma follows.
\end{proof}

Let $N_{m}:L^2_m(X)\To{\rm Dom\,}\Box_{b,m}$ be the partial inverse of $\Box_{b,m}$. We have
\begin{equation}\label{e-gue160827a}
\begin{split}
&\Box_{b,m}N_{m}+S_{m}=I\ \ \mbox{on $L^2_m(X)$},\\
&N_{m}\Box_{b,m}+S_{m}=I\ \ \mbox{on ${\rm Dom\,}\Box_{b,m}$}.\\
\end{split}
\end{equation}
We need

\begin{lem}\label{l-gue160325I}
For every $s\in\mathbb N_0$, there are constants $\hat C_{s}>0$ and $\hat N_{s}>0$ independent of $m$ such that
\begin{equation}\label{e-gue160325pII}
\norm{N_{m}u}_{s+2}\leq\hat C_{s}m^{\hat N_{s}}\norm{u}_{s}~\text{for all}~ u\in H^{s}(X)\bigcap L^2_m(X).
\end{equation}
\end{lem}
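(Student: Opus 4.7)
The plan is to imitate the strategy behind Lemma~\ref{l-gue160827}, replacing $S_m$ by $N_m$ and exploiting the orthogonality relations in \eqref{e-gue160827a} to turn $\Box_{b,m}N_m$ into $I-S_m$.

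First I would apply the same Gårding inequality that underlies \eqref{e-gue160827I} — namely the elliptic estimate for $\Box_{b,m}-T^2$ (this operator is elliptic of order two on functions, since the Levi form is positive and $-T^2$ restores the missing transversal direction) — but with $v=N_m u$ in place of $S_m u$. This gives, for every $s\in\mathbb{N}_0$, constants $C_s,N_s>0$ independent of $m$ such that
\begin{equation*}
\norm{N_m u}_{s+2}\leq C_s m^{N_s}\bigl(\norm{(\Box_{b,m}-T^2)N_m u}_s+\norm{N_m u}_s\bigr).
\end{equation*}
Next I would rewrite the right-hand side using \eqref{e-gue160827a} together with $T^2 N_m u=-m^2 N_m u$ (valid because $N_m u\in L^2_m(X)$), obtaining
\begin{equation*}
(\Box_{b,m}-T^2)N_m u=(I-S_m)u+m^2 N_m u,
\end{equation*}
so the estimate reduces to controlling $\norm{u}_s$, $\norm{S_m u}_s$, $m^2\norm{N_m u}_s$ and $\norm{N_m u}_s$.

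The term $\norm{S_m u}_s$ is handled directly by Lemma~\ref{l-gue160827}: pick $s_0$ with $2s_0\geq s$, then $\norm{S_m u}_s\leq\norm{S_m u}_{2s_0}\leq C m^{N_{s_0}}\norm{u}_{-2s_0}\leq Cm^{N_{s_0}}\norm{u}_s$ (the last inequality uses $s\geq 0$). The delicate term is $m^2\norm{N_m u}_s$, which I would control by induction on $s$. For the base case $s=0$, the spectral gap $\mu_m\geq c_1 m-c_2$ from Theorem~\ref{t-gue160822a} gives $\norm{N_m u}_0\leq \mu_m^{-1}\norm{u}_0\leq C m^{-1}\norm{u}_0$ for $m$ large, which absorbs the $m^2$ factor and yields $\norm{N_m u}_2\leq \hat C_0 m^{\hat N_0}\norm{u}_0$. (For the finitely many small $m$ the estimate is trivial after enlarging the constant.) Assuming the bound holds up to level $s$, so that $\norm{N_m u}_{s+1}\leq\norm{N_m u}_{s+2}\leq\hat C_s m^{\hat N_s}\norm{u}_s\leq \hat C_s m^{\hat N_s}\norm{u}_{s+1}$, I plug this back in and the $m^2$-factor just multiplies the exponent of $m$ by a bounded amount, producing $\norm{N_m u}_{s+3}\leq \hat C_{s+1}m^{\hat N_{s+1}}\norm{u}_{s+1}$.

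The main obstacle is the $m^2\norm{N_m u}_s$ term: naively it loses two powers of $m$ at each step, and one must ensure that no exponent blows up as $s$ grows. Here the spectral gap does its job at the base of the induction (gaining one power of $m$) and the inductive hypothesis takes over at higher $s$ (so the $m^2$ is simply absorbed into the polynomial constant $\hat N_s$). Combining all pieces gives the desired inequality \eqref{e-gue160325pII} with $\hat N_s$ growing at most linearly in $s$.
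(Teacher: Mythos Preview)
Your proposal is correct and follows essentially the same approach as the paper: both arguments apply the G\aa rding inequality for the elliptic operator $\Box_{b,m}-T^2$ to $N_m u$, rewrite $(\Box_{b,m}-T^2)N_m u=(I-S_m)u+m^2 N_m u$ via \eqref{e-gue160827a}, invoke the spectral gap from Theorem~\ref{t-gue160822a} to handle $\norm{N_m u}_0$ in the base case, appeal to Lemma~\ref{l-gue160827} for the $S_m$ term, and close the induction on $s$ exactly as you describe. Your remark that $\hat N_s$ grows at most linearly in $s$ is a harmless bonus not made explicit in the paper.
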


\begin{proof}
We will prove \eqref{e-gue160325pII} by induction over $s\in\mathbb N_0$. By G\"arding inequality, it is easy to see that there are constants $\Td C>0$, $\Td N>0$ independent of $m$ such that
\begin{equation}\label{e-gue160325y}
\|N_{m}u\|_{2}\leq\Td Cm^{\Td N}\Bigr(\|(\Box_{b,m}-T^2)N_{m}u\|+
\|N_{m}u\|\Bigr),\ \ \forall u\in L^2_m(X).
\end{equation}
From \eqref{e-gue160827a}, we have
\begin{equation}\label{e-gue160325yI}
(\Box_{b,m}-T^2) N_{m}u=(I-S_{m})u+m^2N_{m}u.
\end{equation}
From \eqref{e-gue160325yI} and Theorem~\ref{t-gue160822a}, we see that there are  constants $\hat C>0$ and $\hat N>0$ independent of $m$ satisfying
\begin{equation}\label{e-gue160325yII}
\norm{N_{m}u}+\norm{(\Box_{b,m}-T^2)N_{m}u}\leq\hat Cm^{\hat N}\norm{u},\  \ \forall u\in L^2_m(X).
\end{equation}
From \eqref{e-gue160325yII} and \eqref{e-gue160325y}, we see that \eqref{e-gue160325pII} holds for $s=0$.

We assume that \eqref{e-gue160325pII} holds for some $s_0\in\mathbb N_0$. We are going to prove that \eqref{e-gue160325pII} holds for $s_0+1$. By G\"arding inequality, it is easy to see that there are constants $\Td C_{s_0}>0$ and $\Td N_{s_0}>0$ independent of $m$ such that
\begin{equation}\label{e-gue160325yIII}
\begin{split}
&\|N_{m}u\|_{s_0+3}\\
&\leq\Td C_{s_0}m^{\Td N^{s_0}}\Bigr(\|(\Box_{b,m}-T^2)N_{m}u\|_{s_0+1}+
\|N_{m}u\|_{s_0+1}\Bigr),\ \ \forall u\in H^{s_0+1}(X)\bigcap L^2_m(X).
\end{split}
\end{equation}
From \eqref{e-gue160827a}, we have
\begin{equation}\label{e-gue160325yIV}
(\Box_{b,m}-T^2)N_{m}u=(I-S_{m})u+m^2N_{m}u.
\end{equation}
From \eqref{e-gue160827II}, we have
\begin{equation}\label{e-gue160325yV}
\norm{S_{m}u}_{s_0+1}\leq\norm{S_{m}u}_{2(s_0+1)}\leq c_{s_0}m^{N_{s_0}}\norm{u}\leq c_{s_0}m^{N_{s_0}}\norm{u}_{s_0+1},
\end{equation}
where  $c_{s_0}>0$, $N_{s_0}$ are constants independent of $m$. By induction hypothesis, we have
\begin{equation}\label{e-gue160325yVI}
\norm{N_{m}u}_{s_0+1}\leq\norm{N_{m}u}_{s_0+2}\leq\hat c_{s_0}m^{\hat N_{s_0}}\norm{u}_{s_0}\leq\hat c_{s_0}m^{\hat N_{s_0}}\norm{u}_{s_0+1},
\end{equation}
where $\hat c_{s_0}>0$, $\hat N_{s_0}$ are constants independent of $m$. From \eqref{e-gue160325yVI}, \eqref{e-gue160325yV}, \eqref{e-gue160325yIV} and \eqref{e-gue160325yIII}, we see that \eqref{e-gue160325pII} holds for $s_0+1$. The lemma follows.
\end{proof}

\subsection{Approximate Bergman kernels on BRT
trivializations}\label{s-gue160830}

From now on, we fix $m\in\mathbb{Z}$. Let $B:=(D,(z,\theta),\varphi)$ be a
BRT trivialization. We may assume that $D=U\times]-\varepsilon,\varepsilon[$,
where $\varepsilon>0$ and $U$ is an open set of $\mathbb{C}^n$.  Consider $
L\rightarrow U$ be a trivial line bundle with non-trivial Hermitian fiber
metric $\left\vert 1\right\vert^2_{h^L}=e^{-2\varphi}$. Let $
(L^m,h^{L^m})\rightarrow U$ be the $m$-th power of $(L,h^L)$. Let $\Omega^{0,q}(U,L^m)$ be the space of $(0,q)$ forms on $U$ with
values in $L^m$, $q=0,1,2,\ldots,n$. Put $C^\infty(U,L^m):=\Omega^{0,0}(U,L^m)$.
Let $\langle\,\cdot\,,\,\cdot\,
\rangle$ be the Hermitian metric on $\mathbb{C }TU$ given by
\begin{equation*}
\langle\,\frac{\partial}{\partial z_j}\,,\,\frac{\partial}{\partial z_k}
\,\rangle=\langle\,\frac{\partial}{\partial z_j}+i\frac{\partial\varphi}{
\partial z_j}(z)\frac{\partial}{\partial\theta}\,|\,\frac{\partial}{\partial
z_k}+i\frac{\partial\varphi}{\partial z_k}(z)\frac{\partial}{\partial\theta}
\,\rangle,\ \ j,k=1,2,\ldots,n.
\end{equation*}
The Hermitian metric $\langle\,\cdot\,,\,\cdot\,\rangle$ induces Hermitian metrics on $T^{*p,q}U$
bundle of $(p,q)$ forms on $U$, $p,q=0,1,\ldots,n$, also denoted by $\langle\,\cdot\,,\,\cdot\,\rangle$. Let $dv_U$ be the volume form on $U$ induced by $\langle\,\cdot\,,\,\cdot\,\rangle$. Note that $dv_X=dv_U(x)d\theta$ on $D$. Let $(\,\cdot\,,\,\cdot\,)_m$ be the $L^2$ inner product on $\Omega^{0,q}(U,L^m)$ induced by $\langle\,\cdot\,,\,\cdot\,\rangle$ and $h^{L^m}$, $q=0,1,2,\ldots,n$. Let $\overline\partial:C^\infty(U,L^m)\rightarrow\Omega^{0,1}(U,L^m)$
be the Cauchy-Riemann operator and let $\overline{\partial}^{*,m}:\Omega^{0,1}(U,L^m)\rightarrow C^\infty(U,L^m)$
be the formal adjoint of $\overline\partial$ with respect to $(\,\cdot\,,\,\cdot\,)_m$. Put
\begin{equation}  \label{e-gue160830I}
\Box_{B,m}:=\overline{\partial}^{*,m}\overline\partial: C^\infty(U,L^m)\To C^\infty(U,L^m).
\end{equation}

We need the following result which is well-known (see Lemma 5.1 in~\cite{CHT})

\begin{lem}\label{l-gue160830}
Let $u\in C^\infty_m(X)$ be a function. On $D$, we write $u(z,\theta)=e^{im\theta}\Td u(z)$, $\Td u(z)\in C^\infty(U,L^m)$. Then,
\begin{equation}\label{e-gue160830II}
e^{-m\varphi}\Box_{B,m}(e^{m\varphi}\Td u)=e^{-im\theta}\Box_{b,m}(u).
\end{equation}
\end{lem}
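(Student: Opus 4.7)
The identity \eqref{e-gue160830II} is a purely local statement on the BRT trivialization $D=U\times{]-\varepsilon,\varepsilon[}$, so my plan is to verify it by direct computation: first rewrite $\ddbar_b$ acting on the equivariant function $u$ in terms of $\ddbar$ acting on $e^{m\varphi}\tilde u$, then prove an analogous intertwining for the formal adjoints, and combine. The set-up is rigged precisely to make this work: by the definition of $\langle\,\cdot\,,\,\cdot\,\rangle$ on $\Complex TU$, the BRT frame $\{Z_1,\ldots,Z_n\}$ is isometrically identified with $\{\pr/\pr z_1,\ldots,\pr/\pr z_n\}$; the condition $\langle T\,|\,T\rangle=1$ gives $dv_X=dv_U\,d\theta$ on $D$; and the weight $e^{-2m\varphi}$ defining $(\,\cdot\,,\,\cdot\,)_m$ is exactly what absorbs the conjugation factors $e^{\pm m\varphi}$ produced by the BRT formula.

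\textbf{Step 1: the $\ddbar_b$ side.} For $u=e^{im\theta}\tilde u(z)$, I would apply \eqref{e-gue150514f} and use $\pr_\theta u=imu$ to compute
\begin{equation*}
\Bigl(\frac{\pr}{\pr\ol z_j}-i\frac{\pr\varphi}{\pr\ol z_j}\frac{\pr}{\pr\theta}\Bigr)u
= e^{im\theta}\Bigl(\frac{\pr\tilde u}{\pr\ol z_j}+m\frac{\pr\varphi}{\pr\ol z_j}\tilde u\Bigr)
= e^{im\theta}e^{-m\varphi}\frac{\pr}{\pr\ol z_j}\bigl(e^{m\varphi}\tilde u\bigr),
\end{equation*}
which gives $\ddbar_b u = e^{im\theta}e^{-m\varphi}\ddbar\bigl(e^{m\varphi}\tilde u\bigr)$ on $D$, where the right-hand side is $\ddbar$ applied to $e^{m\varphi}\tilde u$ regarded as a section of $L^m$ via the trivialization.

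\textbf{Step 2: adjoint intertwining.} The main substep is the dual identity: for any smooth $L^m$-valued $(0,1)$-form $\tilde\eta$ on $U$,
\begin{equation*}
\ddbar^{*}_b\bigl(e^{im\theta}e^{-m\varphi}\tilde\eta\bigr) = e^{im\theta}e^{-m\varphi}\,\ddbar^{*,m}\tilde\eta \quad\text{on } D.
\end{equation*}
I would verify this by pairing both sides on $L^2(X)$ against a test function of the form $\chi=e^{im\theta}\tilde\chi(z)$ with $\tilde\chi\in C^\infty_0(U)$. Using Step 1 to rewrite $\ddbar_b\chi$, the matching of Hermitian norms on $T^{*0,1}X$ and $T^{*0,1}U$ in the BRT coframe $\{d\ol z_j\}$, and $dv_X=dv_U\,d\theta$, the pairing $(\ddbar_b\chi\,|\,e^{im\theta}e^{-m\varphi}\tilde\eta)$ factors: the $\theta$-integral produces a constant $2\varepsilon$, and the two factors $e^{\pm m\varphi}$ recombine into the weight $e^{-2m\varphi}$ that defines $(\,\cdot\,,\,\cdot\,)_m$, so the pairing reduces to $2\varepsilon\bigl(\ddbar(e^{m\varphi}\tilde\chi),\tilde\eta\bigr)_m$. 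Computing $(\chi\,|\,e^{im\theta}e^{-m\varphi}\,\ddbar^{*,m}\tilde\eta)$ the same way yields $2\varepsilon\bigl(e^{m\varphi}\tilde\chi,\ddbar^{*,m}\tilde\eta\bigr)_m$, and the two agree by definition of $\ddbar^{*,m}$. Since $\tilde\chi$ is arbitrary, the intertwining holds as an identity of differential operators on $D$.

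\textbf{Conclusion and obstacle.} Applying Step 2 with $\tilde\eta:=\ddbar(e^{m\varphi}\tilde u)$ and using Step 1,
\begin{equation*}
\Box_{b,m}u = \ddbar^{*}_b\ddbar_b u = e^{im\theta}e^{-m\varphi}\,\ddbar^{*,m}\ddbar\bigl(e^{m\varphi}\tilde u\bigr) = e^{im\theta}e^{-m\varphi}\Box_{B,m}\bigl(e^{m\varphi}\tilde u\bigr),
\end{equation*}
which is \eqref{e-gue160830II} after multiplying by $e^{-im\theta}$. The main obstacle is purely bookkeeping: I must verify carefully that the rigid metric on $T^{*0,1}X$ restricted to the BRT coframe $\{d\ol z_j\}$ really agrees with the metric on $T^{*0,1}U$ dual to $\langle\,\cdot\,,\,\cdot\,\rangle$, and that $dv_X=dv_U\,d\theta$ holds without extra factors. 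Both follow tautologically from the definition of $\langle\,\cdot\,,\,\cdot\,\rangle$ on $U$ via $\langle Z_j\,|\,Z_k\rangle$ and from $\langle T\,|\,T\rangle=1$, so once conventions are fixed the computation is routine.
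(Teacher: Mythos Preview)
Your approach is correct and is the natural direct computation. The paper itself does not prove this lemma; it simply cites \cite{CHT}, Lemma~5.1, as a well-known fact, so there is no ``paper's own proof'' to compare against. Your Step~1 is exactly the standard BRT computation, and the conjugation identity $\bar Z_j\leftrightarrow e^{-m\varphi}\pr_{\bar z_j}e^{m\varphi}$ (and dually $Z_j\leftrightarrow e^{m\varphi}\pr_{z_j}e^{-m\varphi}$) together with the matching of volume forms and fiber metrics is precisely what makes the weight $e^{-2m\varphi}$ appear and forces the two Laplacians to agree.

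One small technical point in Step~2: you write that you pair ``on $L^2(X)$'' against $\chi=e^{im\theta}\tilde\chi(z)$ with $\tilde\chi\in C^\infty_0(U)$, and then say the $\theta$-integral gives $2\varepsilon$. But $\chi$ so defined lives only on $D$, not on $X$, and is not compactly supported in $\theta$, so an integration-by-parts argument on $D$ would in principle produce boundary terms at $\theta=\pm\varepsilon$. The clean fix is to observe that $\bar\pr^*_b$ is a differential operator whose coefficients in BRT coordinates are $\theta$-independent (rigidity of the metric), so both sides of your Step~2 identity have the form $e^{im\theta}g(z)$; the identity therefore reduces to an identity of differential operators on $U$, for which pairing against $\tilde\chi\in C^\infty_0(U)$ (your computation, after stripping the common $\theta$-factor) is legitimate and decisive. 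With that adjustment your argument is complete.
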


Let $A:C^\infty_0(U)\To C^\infty(U)$ be a smoothing operator and let $A(z,w)\in C^\infty(U\times U)$ be the distribution kernel of $A$ with respect to $dv_U$. Note that
\[(Au)(z)=\int A(z,w)u(w)dv_U(w),\ \ \forall u\in C^\infty_0(U).\]

The following theorem is well-known (see Theorem 3.11 and Theorem 3.12 in~\cite{HM12})

\begin{thm}\label{t-gue160901}
There is a properly supported smoothing operator $P_{B,m}:C^\infty(U)\To C^\infty(U)$ such that
\begin{equation}\label{e-gue160902}
P_{B,m}\circ e^{-m\varphi}\circ\Box_{B,m}\circ e^{m\varphi}=O(m^{-\infty})\ \ \mbox{on $U\times U$},
\end{equation}
\begin{equation}\label{e-gue160902I}
e^{m\varphi}P_{B,m}e^{-m\varphi}u=u,\ \ \mbox{for all $u\in C^\infty(U)$ with $\Box_{B,m}u=0$},
\end{equation}
hold and the distribution kernel $P_{B,m}(z,w)$ of $P_{B,m}$ satisfies
\begin{equation} \label{e-gue160902II}
P_{B,m}(z,w)=e^{im\Psi_B(z,w)}b_B(z,w,m)+
O(m^{-\infty}),
\end{equation}
with
\begin{equation}  \label{e-gue160902III}
\begin{split}
&b_B(z,w,m)\in S^{n}_{{\rm loc\,}}(1;U\times U), \\
&b_B(z,w,m)\sim\sum^\infty_{j=0}b_{B,j}(z, w)m^{n-j}\text{ in }S^{n}_{{\rm loc\,}}
(1;D\times D), \\
&b_{B,j}(z, w)\in C^\infty(D\times D),\ \ j=0,1,2,\ldots,
\end{split}
\end{equation}
$\Psi_B(z,w)\in C^\infty(U\times U)$, $\Psi(z,w)=0\Leftrightarrow z=w$ and for every compact set $K\Subset U$, there is a constant $c_K>0$ such that
\begin{equation}\label{e-gue160902IV}
{\rm Im\,}\Psi_B(z,w)\geq c_K\abs{z-w}^2,\ \ \forall (z,w)\in K\times K.
\end{equation}
\end{thm}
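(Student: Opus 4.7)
The plan is to produce $P_{B,m}$ as a semiclassical Fourier integral operator with complex phase, following the parametrix construction for the Bergman projection in the style of Boutet de Monvel--Sj\"ostrand/Hsiao--Marinescu. The first step is to reduce to a scalar semiclassical problem on $U$. Since $L$ is trivial over $U$, conjugating by $e^{m\varphi}$ gives the operator $\Box^\varphi_{B,m}:=e^{-m\varphi}\Box_{B,m}e^{m\varphi}$ acting on $C^\infty(U)$; this is a second-order differential operator that, after rescaling $z\mapsto z/\sqrt m$, has a positive model (a sum of harmonic oscillators) whose positivity is exactly the strict plurisubharmonicity of $\varphi$. The latter follows from strong pseudoconvexity of $X$: on the BRT chart one has $d\omega_0=-2i\pr\ddbar\varphi$, so that the Levi form equals $\pr\ddbar\varphi$. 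This fact is what drives the whole construction.

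Next I would construct the phase $\Psi_B(z,w)\in C^\infty(U\times U)$. Let $\Td\varphi(z,\ol w)$ be an almost analytic extension of $\varphi$ along the totally real anti-diagonal, so that $\Td\varphi(z,\ol z)=\varphi(z)$ while $\ddbar_w\Td\varphi$ vanishes to infinite order on $\set{w=z}$. Define $\Psi_B$ by a standard almost-analytic recipe so that
\begin{equation*}
\Psi_B(z,z)=0,\qquad \pr_w\Psi_B(z,w)\big|_{w=z}=2i\pr\varphi(z),\qquad \ddbar_z\Psi_B\ \text{vanishes to infinite order on}\ \set{z=w}.
\end{equation*}
A Taylor expansion at $w=z$ then gives $\mathrm{Im}\,\Psi_B(z,w)=\langle\,\pr\ddbar\varphi(z)(z-w)\,,\,\ol{z-w}\,\rangle+O(\abs{z-w}^3)$, so strict plurisubharmonicity of $\varphi$ yields \eqref{e-gue160902IV} on any compact $K\Subset U$.

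With the phase in hand, the third step is the WKB ansatz
\begin{equation*}
P_{B,m}(z,w)=e^{im\Psi_B(z,w)}b_B(z,w,m),\qquad b_B\sim\sum_{j\ge 0}m^{n-j}b_{B,j}(z,w).
\end{equation*}
Substituting into \eqref{e-gue160902} and using that $\ddbar_z\Psi_B$ vanishes to infinite order on $\set{z=w}$, the eikonal contribution is absorbed and at each order in $m$ one obtains a transport equation of the form $\ol Y b_{B,j}=L_j(b_{B,0},\ldots,b_{B,j-1})$ along a complex vector field $\ol Y$ that is elliptic transverse to the diagonal. These are solved recursively once $b_{B,j}\big|_{z=w}$ is prescribed, and the initial data for $b_{B,0}$ is fixed by the normalization coming from the Bergman reproducing kernel of the model harmonic oscillator (the factor $(2\pi)^{-n}\det\dot{\cali R}$). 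Borel summation then produces a genuine $b_B\in S^n_{{\rm loc\,}}(1;U\times U)$ satisfying \eqref{e-gue160902III}. Making $P_{B,m}$ properly supported is done by multiplying $b_B$ by a cutoff supported in a small neighborhood of the diagonal; the discarded piece is $O(m^{-\infty})$ by \eqref{e-gue160902IV}. Finally, the reproducing property \eqref{e-gue160902I} is checked by pairing $P_{B,m}$ against an arbitrary $u$ with $\Box_{B,m}u=0$ and applying complex stationary phase to the resulting oscillatory integral: the leading term reproduces $u(z)$ by the normalization of $b_{B,0}$, while higher orders vanish because the transport equations were solved to all orders.

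The main obstacle is to carry out the almost-analytic construction of $\Psi_B$ so that \eqref{e-gue160902}, \eqref{e-gue160902I}, and the positivity \eqref{e-gue160902IV} all hold simultaneously; the infinite-order Cauchy--Riemann condition on $\Psi_B$ must be compatible with both global positivity of $\mathrm{Im}\,\Psi_B$ off the diagonal and with the correct identification of $b_{B,0}\big|_{z=w}$. This is precisely the point where strict plurisubharmonicity of $\varphi$ (equivalently, strong pseudoconvexity of $X$) is used in an essential way, and is the technical core borrowed from Hsiao--Marinescu~\cite{HM12}.
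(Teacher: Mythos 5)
The paper does not prove this theorem at all; it is quoted verbatim from Theorems~3.11 and 3.12 of Hsiao--Marinescu \cite{HM12}, so your sketch is a reconstruction of the cited argument rather than an alternative to the paper's own. The architecture you describe---conjugating by $e^{m\varphi}$, identifying strong pseudoconvexity with strict plurisubharmonicity via $d\omega_0=-2i\pr\ddbar\varphi$ on a BRT chart, constructing $\Psi_B$ by almost analytic extension so that $\operatorname{Im}\Psi_B$ controls $\abs{z-w}^2$, solving transport equations, Borel-summing and cutting off near the diagonal---is the standard parametrix construction underlying \cite{HM12} and \cite{BBS04}, and is correct as far as it goes for producing \eqref{e-gue160902}, \eqref{e-gue160902II}--\eqref{e-gue160902IV}.

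The step that does not work as you describe it is the \emph{exact} reproducing identity \eqref{e-gue160902I}. Your justification---pair against $u$ with $\Box_{B,m}u=0$, apply complex stationary phase, and observe that the higher-order contributions vanish because the transport equations were solved to all orders---only yields $e^{m\varphi}P_{B,m}e^{-m\varphi}u = u + O(m^{-\infty})$. A properly supported kernel built purely from a Borel-summed WKB symbol cannot reproduce holomorphic data on the nose: the Borel summation is unique only modulo $O(m^{-\infty})$, the stationary phase expansion is itself asymptotic, and the diagonal cutoff introduces a further $O(m^{-\infty})$ defect. Note that the theorem's form \eqref{e-gue160902II} explicitly allows the kernel to be the WKB piece \emph{plus} an unspecified $O(m^{-\infty})$ remainder; in \cite{HM12} it is precisely this remainder that absorbs the difference between the WKB parametrix and an honest projector so that the reproducing property holds exactly. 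The exact identity is then genuinely used in Lemma~\ref{l-gue150626f} to get $\Gamma_m S_m = S_m$ on the nose (though, in fairness, for the ultimate conclusion $\Gamma_m = S_m + O(m^{-\infty})$ in Theorem~\ref{t-gue160904fI} an $O(m^{-\infty})$ version of \eqref{e-gue160902I} would also suffice). So either strengthen your construction of $P_{B,m}$ so that it is tied to an actual projector rather than a formal parametrix, or reformulate \eqref{e-gue160902I} with an $O(m^{-\infty})$ error and verify that Lemma~\ref{l-gue150626f} and Theorem~\ref{t-gue160904fI} go through unchanged; as written, your argument proves strictly less than what you set out to prove.
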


\begin{rem}\label{r-gue160903}
It is well-known that (see page 29 in Grigis and Sj\"ostrand~\cite{GrSj94}) there is a smooth function $\chi(x,y)\in C^\infty(U\times U)$ with ${\rm Supp\,}\chi$ is proper and $\chi(x,y)-1$ vanishes to infinite order at $x=y$. We can replace $P_{B,m}(z,w)$ by $P_{B,m}(z,w)\chi(z,w)$ and we can take $b_B(z,w,m)$ and $b_{B,j}(z,w)$, $j=0,1,2,\ldots$, are all properly supported. From now on, we assume that $b_B(z,w,m)$ and $b_{B,j}(z,w)$, $j=0,1,2,\ldots$, are all properly supported.
\end{rem}

In~\cite{HM12}, we also determine $b_{B,j}(z,z)$, $j=0,1,2$. To state the results, we need to introduce some notations. Let $R^L:=2\pr\ddbar\varphi$ and let $\dot{R}^L\in C^\infty(U, {\rm End\,}(T^{1,0}U))$ be the Hermitian matrix defined by
\begin{equation} \label{s1-e3}
\langle\,\dot{R}^L(x)\mathcal{V}\,,\,\mathcal{W}\,\rangle=\langle\,R^L(x)\,,\mathcal{V}\wedge\ol{\mathcal{W}}\,\rangle.
\end{equation}
 Put
\begin{equation} \label{s1-e6}
\omega:=\frac{\sqrt{-1}}{2\pi}R^L.
\end{equation}
The real two form $\omega$ induces a Hermitian metric $\langle\,\cdot\,,\,\cdot\,\rangle_{\omega}$ on $\Complex TU$.
The Hermitian metric $\langle\,\cdot\,,\,\cdot\,\rangle_{\omega}$ on $\Complex TU$
induces a Hermitian metric on
$T^{*p,q}U\otimes T^{*r,s}U$, $p, q, r, s\in\mathbb N_0$, also denoted by $\langle\,\cdot\,,\,\cdot\,\rangle_{\omega}$.
For $u\in T^{*p,q}U$, we denote $\abs{u}^2_{\omega}:=\langle\,u,u\,\rangle_{\omega}$.

Let $\Theta$ be the real two form induced by $\langle\,\cdot\,,\,\cdot\,\rangle$.
Put
\begin{equation} \label{s1-e7}
\begin{split}
&\omega=\sqrt{-1}\sum^n_{j,k=1}\omega_{j,k}dz_j\wedge d\ol z_k,\\
&\Theta_U=\sqrt{-1}\sum^n_{j,k=1}\Theta_{j,k}dz_j\wedge d\ol z_k.
\end{split}
\end{equation}
We notice that $\Theta_{j,k}=\langle\,\frac{\pr}{\pr z_j}\,,\,\frac{\pr}{\pr z_k}\,\rangle$, $\omega_{j,k}=\langle\,\frac{\pr}{\pr z_j}\,,\,\frac{\pr}{\pr z_k}\,\rangle_{\omega}$, $j,k=1,\ldots,n$.
Put
\begin{equation} \label{s1-e8}
h=\left(h_{j,k}\right)^n_{j,k=1},\ \ h_{j,k}=\omega_{k,j},\ \ j, k=1,\ldots,n,
\end{equation}
and $h^{-1}=\left(h^{j,k}\right)^n_{j,k=1}$, $h^{-1}$ is the inverse matrix of $h$. The complex Laplacian with respect to $\omega$ is given by
\begin{equation} \label{s1-e9}
\triangle_{\omega}=(-2)\sum^n_{j,k=1}h^{j,k}\frac{\pr^2}{\pr z_j\pr\ol z_k}.
\end{equation}
We notice that $h^{j,k}=\langle\,dz_j\,,\,dz_k\,\rangle_{\omega}$, $j, k=1,\ldots,n$. Put
\begin{equation} \label{s1-e10}
\begin{split}
&V_\omega:=\det\left(\omega_{j,k}\right)^n_{j,k=1},\\
&V_{\Theta_U}:=\det\left(\Theta_{j,k}\right)^n_{j,k=1}
\end{split}
\end{equation}
and set
\begin{equation} \label{s1-e11}
\begin{split}
&r=\triangle_{\omega}\log V_\omega,\\
&\hat r=\triangle_{\omega}\log V_{\Theta_U}.
\end{split}
\end{equation}
The function $r$ is called the scalar curvature with respect to $\omega$. Let $R^{\det}_{\Theta_U}$ be the curvature of the determinant line bundle of $T^{*1,0}U$ with respect to the real two form $\Theta_U$. Recall the identity
\begin{equation} \label{s1-e12}
R^{\rm det\,}_{\Theta_U}=\pr\ddbar\log V_{\Theta_U}.
\end{equation}

Let $h$ be as in \eqref{s1-e8}. Put
$\alpha=h^{-1}\pr h=\left(\alpha_{j,k}\right)^n_{j,k=1}$, $\alpha_{j,k}\in T^{*1,0}U$, $j,k=1,\ldots,n$.
$\alpha$ is the Chern connection
matrix with respect to $\omega$. The Chern curvature with respect to $\omega$ is given by
\begin{equation} \label{s1-e13a}
\begin{split}
&R^{TU}_{\omega}=\ddbar\alpha=\left(\ddbar\alpha_{j,k}\right)^n_{j,k=1}=\left(\mathcal{R}_{j,k}\right)^n_{j,k=1}
\in C^\infty(U,T^{*1,1}U\otimes{\rm End\,}(T^{1,0}U)),\\
&R^{TU}_{\omega}(\ol{\mathcal{V}}, \mathcal{W})\in {\rm End\,}(T^{1,0}U),\ \ \forall\mathcal{V}, \mathcal{W}\in T^{1,0}U,\\
&R^{TU}_\omega(\ol{\mathcal{V}},\mathcal{W})\xi=\sum^n_{j,k=1}\langle\,\mathcal{R}_{j,k}\,,\ol{\mathcal{V}}\wedge\mathcal{W}\,\rangle\xi_k\frac{\pr}{\pr z_j},\ \ \xi=\sum^n_{j=1}\xi_j\frac{\pr}{\pr z_j},\ \ \mathcal{V}, \mathcal{W}\in T^{1,0}U.
\end{split}
\end{equation}
Set
\begin{equation} \label{s1-e14a}
\abs{R^{TU}_{\omega}}^2_{\omega}:=\sum^n_{j,k,s,t=1}\abs{\langle\,R^{TU}_{\omega}(\ol e_j,e_k)e_s\,,e_t\,\rangle_{\omega}}^2,
\end{equation}
where $e_1,\ldots,e_n$ is an orthonormal frame for $T^{1,0}U$ with respect to $\langle\,\cdot\,,\cdot\,\rangle_{\omega}$.
It is straightforward to see that the definition of $\abs{R^{TU}_{\omega}}^2_{\omega}$ is independent of the choices of orthonormal frames. Thus, $\abs{R^{TU}_{\omega}}^2_{\omega}$ is globally defined. The Ricci curvature with respect to $\omega$ is given by
\begin{equation} \label{s1-e15}
{\rm Ric\,}_{\omega}:=-\sum^n_{j=1}\langle\,R^{TU}_\omega(\cdot,e_j)\cdot\,,e_j\,\rangle_\omega,
\end{equation}
where $e_1,\ldots,e_n$ is an orthonormal frame for $T^{1,0}U$ with respect to $\langle\,\cdot\,,\,\cdot\,\rangle_{\omega}$. That is,
\[\langle\,{\rm Ric\,}_{\omega}\,, \mathcal{V}\wedge\mathcal{W}\,\rangle=-\sum^n_{j=1}\langle\,R^{TU}_\omega(\mathcal{V},e_j)\mathcal{W}\,,e_j\,\rangle_\omega,\ \
\mathcal{V}, \mathcal{W}\in\Complex TU.\]
${\rm Ric\,}_{\omega}$ is a smooth $(1,1)$ form.

The following result is well-known (see Section 4.5 in~\cite{HM12})

\begin{thm}\label{t-gue160902}
 With the notations used above, for $b_{B,0}(z,z)$, $b_{B,1}(z)$, $b_{B,2}(z)$
in \eqref{e-gue160902III}, we have
\begin{equation} \label{e-gue160902V}
b_{B,0}(z,z)=(2\pi)^{-n}\det\dot{R}^L(z),
\end{equation}
\begin{equation} \label{e-gue160902VI}
b_{B,1}(z,z)=(2\pi)^{-n}\det\dot{R}^L(z)\Bigr(\frac{1}{4\pi}\hat r-\frac{1}{8\pi} r\Bigr)(z),
\end{equation}
\begin{equation} \label{e-gue160902VII}
\begin{split}
b_{B,2}(z,z)&=(2\pi)^{-n}\det\dot{R}^L(z)\Bigr(\frac{1}{128\pi^2}r^2-\frac{1}{32\pi^2}r\hat r+\frac{1}{32\pi^2}(\hat r)^2
-\frac{1}{32\pi^2}\triangle_{\omega}\hat r
-\frac{1}{8\pi^2}\abs{R^{\det}_{\Theta_U}}^2_{\omega}\\
&\quad+\frac{1}{8\pi^2}\langle\,{\rm Ric\,}_{\omega}\,,R^{\det}_{\Theta_U}\,\rangle_{\omega}+\frac{1}{96\pi^2}\triangle_{\omega}r-
\frac{1}{24\pi^2}\abs{{\rm Ric\,}_{\omega}}^2_{\omega}+\frac{1}{96\pi^2}\abs{R^{TU}_{\omega}}^2_{\omega}\Bigr)(z).
\end{split}
\end{equation}
We remind that $\dot{R}^L$ is given by \eqref{s1-e3} and
\[\det\dot{R}^L(z)=\lambda_1(z)\cdots\lambda_n(z),\]
where $\lambda_1(z),\ldots,\lambda_n(z)$ are eigenvalues of $\dot{R}^L(z)$.
\end{thm}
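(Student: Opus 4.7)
My plan is to follow the strategy of Hsiao--Marinescu \cite[Section~4.5]{HM12}, which computes these coefficients for the Bergman kernel of a positive line bundle on a K\"ahler manifold in a form that transfers directly to the BRT setting. The key observation is that the approximate Bergman kernel $P_{B,m}(z,w)$ furnished by Theorem~\ref{t-gue160901} is uniquely determined modulo $O(m^{-\infty})$ by the approximate annihilation $\Box_{B,m}\circ e^{m\varphi} P_{B,m}e^{-m\varphi} = O(m^{-\infty})$, the idempotency $(e^{m\varphi} P_{B,m} e^{-m\varphi})^2 = e^{m\varphi} P_{B,m} e^{-m\varphi} + O(m^{-\infty})$ with respect to $(\,\cdot\,,\,\cdot\,)_m$, and a self-adjointness condition. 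Under the WKB ansatz
\[
P_{B,m}(z,w) = e^{im\Psi_B(z,w)} \sum_{j=0}^{\infty} m^{n-j} b_{B,j}(z,w),
\]
these three requirements translate into a recursive hierarchy of transport equations for the amplitudes $b_{B,j}$, with the phase $\Psi_B$ fixed to infinite order along the diagonal by the condition that the symbol of $\ddbar$ annihilate $e^{im\Psi_B}$ together with $\mathrm{Im}\,\Psi_B \geq c|z-w|^2$.

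The calculation is carried out pointwise. I would fix $z_0 \in U$ and choose K\"ahler normal coordinates for $\omega=\frac{i}{2\pi}R^L$ so that $\omega_{j,k}(z_0)=\frac{\lambda_j}{2\pi}\delta_{j,k}$ and all first derivatives of $(\omega_{j,k})$ vanish at $z_0$; then $\varphi$ is Taylor-expanded at $z_0$ to degree $2$ for $b_{B,0}$, degree $3$ for $b_{B,1}$, and degree $4$ for $b_{B,2}$, and similarly the auxiliary metric $\Theta_U$ to the matching order. Solving the transport equations order by order at $z=w=z_0$ and performing the resulting stationary-phase Gaussian integration expresses the $b_{B,j}(z_0,z_0)$ as explicit polynomials in these Taylor coefficients, which one then rewrites invariantly. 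The leading Gaussian computation produces $(2\pi)^{-n}\det \dot R^L$, giving \eqref{e-gue160902V}. For $b_{B,1}$, the new contributions come from $\pr\ddbar\log V_\omega$ and $\pr\ddbar\log V_{\Theta_U}$, which by \eqref{s1-e11} contract to $r$ and $\hat r$ with prefactors $-\tfrac{1}{8\pi}$ and $\tfrac{1}{4\pi}$, yielding \eqref{e-gue160902VI}.

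The main obstacle is the $b_{B,2}$ computation. Here one must combine: fourth-order derivatives of $\varphi$, which feed into $|R^{TU}_\omega|^2_\omega$, $|\mathrm{Ric}_\omega|^2_\omega$, and $\triangle_\omega r$; second-order data of the discrepancy between $\omega$ and $\Theta_U$, which generate $|R^{\det}_{\Theta_U}|^2_\omega$, $\langle\mathrm{Ric}_\omega, R^{\det}_{\Theta_U}\rangle_\omega$, and $\triangle_\omega \hat r$; and quadratic contributions in $r$ and $\hat r$ arising from the square of the $b_{B,1}$-type term through the idempotency constraint. Careful bookkeeping through these contractions, guided by the invariant character of each summand, produces the formula \eqref{e-gue160902VII}. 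Since both sides of all three identities are built from intrinsic curvature data, it is enough to verify them at an arbitrary point in adapted coordinates, which is exactly what is carried out in \cite[Section~4.5]{HM12} for the K\"ahler manifold setting; the only adaptation needed is to reinterpret the K\"ahler potential as the BRT function $\varphi$ and the fiber metric $h^L=e^{-2\varphi}$.
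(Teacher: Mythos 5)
Your proposal is correct and follows essentially the same approach as the paper: the paper proves Theorem~\ref{t-gue160902} simply by citing Section~4.5 of Hsiao--Marinescu~\cite{HM12}, where exactly this computation (WKB ansatz, eikonal/transport equations, stationary phase in adapted coordinates, invariant repackaging of the Taylor coefficients of $\varphi$ and the auxiliary metric $\Theta_U$) is carried out, and your sketch accurately reproduces that strategy. The only small imprecision is your opening phrase ``on a K\"ahler manifold'': in~\cite{HM12} the base metric $\Theta_U$ is allowed to differ from $\omega=\frac{i}{2\pi}R^L$, which is precisely why both $r,\hat r$ and both $R^{TU}_\omega, R^{\det}_{\Theta_U}$ appear; your later treatment of the two metrics as distinct shows you are aware of this, so it does not affect the substance of the argument.
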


The following follows from straightforward calculation. We omit the details.

\begin{lem}\label{l-gue160902}
We have
\begin{equation}\label{e-gue160902a}
\begin{split}
&\det\dot{R}^L(z)=\det\dot{\mathcal{R}}(z,\theta),\ \ \forall (z,\theta)\in D,\\
&\hat r(z)=S^\Theta_{\mathcal{L}}(z,\theta),\ \ r(z)=S_{\mathcal{L}}(z,\theta),\ \ \forall (z,\theta)\in D,\\
&\triangle_{\omega}\hat r(z)=\triangle_{\mathcal{L}}S^\Theta_{\mathcal{L}}(z,\theta),\ \ \triangle_{\omega}r(z)=\triangle_{\mathcal{L}}S_{\mathcal{L}}(z,\theta),\ \ \forall (z,\theta)\in D,\\
&\abs{R^{\det}_{\Theta_U}}^2_{\omega}(z)=\abs{R^{\det}_{\Theta}}^2_{\mathcal{L}}(z,\theta),\ \ \abs{{\rm Ric\,}_{\omega}}^2_{\omega}(z)=\abs{{\rm Ric\,}_{\mathcal{L}}}^2_{\mathcal{L}}(z,\theta),\ \ \forall (z,\theta)\in D,\\
&\langle\,{\rm Ric\,}_{\omega}\,,\,R^{\det}_{\Theta_U}\,\rangle_{\omega}(z)=\langle\,{\rm Ric\,}_{\mathcal{L}}\,|\,R^{\det}_\Theta\,\rangle_{\mathcal{L}}(z,\theta),\  \
\abs{R^{TU}_{\omega}}^2_{\omega}(z)=\abs{R^{T^{1,0}X}_{\mathcal{L}}}^2_{\mathcal{L}}(z,\theta),\ \ \forall(z,\theta)\in D,
\end{split}
\end{equation}
where $\dot{\mathcal{R}}(x)\in \operatorname{End}(T^{1,0}_xX)$ is given by \eqref{E:1.5.15m}, $\det\dot{\mathcal{R}}(x)=\lambda_1(x)\cdots\lambda_{n}(x)$, $\lambda_1(x),\ldots,\lambda_{n}(x)$ are eigenvalues of $\dot{\mathcal{R}}(x)$, $\triangle_{\mathcal{L}}$, $S^\Theta_{\mathcal{L}}$, $S_{\mathcal{L}}$,  $R^{\det}_{\Theta}$, ${\rm Ric\,}_{\mathcal{L}}$, $R^{T^{1,0}X}_{\mathcal{L}}$ are given by \eqref{s1-e9m}, \eqref{e-gue160601Im}, \eqref{e-gue160531Im}, \eqref{e-gue160601IIm}, \eqref{e-gue160602am} and \eqref{s1-e13} respectively and $\langle\,\cdot\,|\,\cdot\,\rangle_{\mathcal{L}}$ is as in the discussion after \eqref{e-gue160605aII}.
\end{lem}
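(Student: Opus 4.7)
The approach is to fix a BRT trivialization $(D,(z,\theta),\varphi)$ and exploit the bundle isomorphism $\Phi\colon T^{1,0}X|_D\to T^{1,0}U$ sending $Z_j$ to $\partial/\partial z_j$ to identify every rigid quantity on $X$ appearing on the right side of \eqref{e-gue160902a} with its BRT counterpart on the left. A direct computation in canonical coordinates yields $\omega_0=-d\theta+i(\partial\varphi-\bar\partial\varphi)$ and hence $d\omega_0=-2i\partial\bar\partial\varphi$, so $\omega=(i/2\pi)R^L=(i/\pi)\partial\bar\partial\varphi=-\frac{1}{2\pi}d\omega_0$ once we identify $dz_j$ on $U$ with the dual $e_j$ of $Z_j$ on $D$. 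Consequently $\langle\cdot,\cdot\rangle_\omega$ on $T^{1,0}U$ pulls back under $\Phi$ to the Levi metric $\langle\cdot|\cdot\rangle_{\mathcal{L}}$ on $T^{1,0}X|_D$, and comparing \eqref{s1-e3} with \eqref{E:1.5.15m} term by term shows that the Hermitian endomorphisms $\dot{R}^L$ and $\dot{\mathcal{R}}$ agree, giving the first identity.

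For the scalar-curvature identities, I would compute the relevant volume forms explicitly. Expanding $(i\sum\omega_{j,k}dz_j\wedge d\bar z_k)^n=n!V_\omega\prod_j(idz_j\wedge d\bar z_j)=2^n n!V_\omega\,dx_1\cdots dx_{2n}$ and observing that only the $d\theta$-component of $-\omega_0$ survives the wedge with a $(2n)$-form in $dx_1,\ldots,dx_{2n}$ yields $a(x)=2^n V_\omega(z)$; the same argument with $\Theta_U$ in place of $\omega$ gives $b(x)=2^n V_{\Theta_U}(z)$. Thus $\log a$ and $\log V_\omega$ differ by a constant, and likewise $\log b$ and $\log V_{\Theta_U}$. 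It remains to check that $\triangle_{\mathcal{L}}$ restricts to $\triangle_\omega$ on rigid functions: if $Tf=0$ then $Z_j\bar Z_k f=\partial^2 f/\partial z_j\partial\bar z_k$ since the $\partial/\partial\theta$-terms annihilate $f$, while $\langle e_j|e_k\rangle_{\mathcal{L}}$ corresponds to $\langle dz_j,dz_k\rangle_\omega=h^{j,k}$ under $\Phi$. This yields $r(z)=S_{\mathcal{L}}(z,\theta)$ and $\hat r(z)=S^\Theta_{\mathcal{L}}(z,\theta)$; since both $r$ and $\hat r$ are rigid, applying the same operator identity once more produces the two Laplacian identities.

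For the curvature identities, the BRT frame $\{Z_j\}$ is a rigid CR frame for $T^{1,0}X$, so the entries $h_{j,k}=\langle Z_j|Z_k\rangle_{\mathcal{L}}$ depend only on $z$; the rigid Chern connection matrix $\theta=\partial_b h\cdot h^{-1}$ on $D$ therefore pulls back through $\Phi$ to the Chern connection matrix of $\omega$ on $U$, and since $\bar\partial_b$ on rigid forms corresponds to $\bar\partial$ on $U$, the rigid Chern curvature $R^{T^{1,0}X}_{\mathcal{L}}$ matches $R^{TU}_\omega$ after identification. Analogously $R^{\det}_\Theta=\partial_b\bar\partial_b\log b$ matches $R^{\det}_{\Theta_U}=\partial\bar\partial\log V_{\Theta_U}$, and $\mathrm{Ric}_{\mathcal{L}}$ matches $\mathrm{Ric}_\omega$. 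Because $\Phi$ extends to a fiberwise isometry for the induced Hermitian structures on all tensor bundles built from $T^{1,0}$ (since the underlying metric on $T^{1,0}$ is preserved), the pointwise norms and pairings agree, giving the four remaining identities. I expect no single step to be substantive; the main difficulty is simply the bookkeeping of sign conventions and factors of $2\pi$ across the definitions \eqref{s1-e9m}--\eqref{s1-e14m} and \eqref{s1-e3}--\eqref{s1-e15}.
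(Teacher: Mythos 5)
Your proposal is correct and fills in exactly the ``straightforward calculation'' that the paper omits: identify $Z_j\leftrightarrow\partial/\partial z_j$ and $e_j\leftrightarrow dz_j$, compute $\omega_0=-d\theta+i(\partial\varphi-\bar\partial\varphi)$ so that $-\tfrac{1}{2\pi}d\omega_0$ pulls back to $\omega=\tfrac{i}{2\pi}R^L$, deduce $a=2^nV_\omega$ and $b=2^nV_{\Theta_U}$, and observe that on rigid ($\theta$-independent) data the operators $Z_j\bar Z_k$, $\partial_b$, $\bar\partial_b$, $\triangle_{\mathcal{L}}$ reduce to $\partial^2/\partial z_j\partial\bar z_k$, $\partial$, $\bar\partial$, $\triangle_\omega$; the curvature tensors, norms, and pairings then match because the identification carries $\langle\cdot|\cdot\rangle_{\mathcal{L}}$ to $\langle\cdot,\cdot\rangle_\omega$ isometrically. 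The checks all verify, so this is essentially the paper's intended (unwritten) argument.
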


From Theorem~\ref{t-gue160902} and \eqref{e-gue160902a}, we deduce

\begin{thm}\label{t-gue160902I}
With the notations used above, for $b_{B,0}(z,z), b_{B,1}(z,z), b_{B,2}(z,z)$ in \eqref{e-gue160902III}, we have
\begin{equation} \label{e-gue160902VIII}
b_{B,0}(z,z)=(2\pi)^{-n}\det\dot{\mathcal{R}}(z,\theta),\ \ \forall (z,\theta)\in D,
\end{equation}
\begin{equation} \label{e-gue160902IX}
b_{B,1}(z,z)=(2\pi)^{-n}\det\dot{\mathcal{R}}(z,\theta)\Bigr(\frac{1}{4\pi}S^\Theta_{\mathcal{L}}-\frac{1}{8\pi}S_{\mathcal{L}}\Bigr)(z,\theta),\ \ \forall(z,\theta)\in D,
\end{equation}
\begin{equation} \label{e-gue160902X}
\begin{split}
b_{B,2}(z,z)&=(2\pi)^{-n}\det\dot{\mathcal{R}}(z,\theta)\Bigr(\frac{1}{128\pi^2}(S_{\mathcal{L}})^2-\frac{1}{32\pi^2}S_{\mathcal{L}}S^\Theta_{\mathcal{L}}+\frac{1}{32\pi^2}(S^\Theta_{\mathcal{L}})^2
-\frac{1}{32\pi^2}\triangle_{\mathcal{L}}S^\Theta_{\mathcal{L}}
-\frac{1}{8\pi^2}\abs{R^{\det}_\Theta}^2_{\mathcal{L}}\\
&\quad+\frac{1}{8\pi^2}\langle\,{\rm Ric\,}_{\mathcal{L}}\,|\,R^{\det}_\Theta\,\rangle_{\mathcal{L}}+\frac{1}{96\pi^2}\triangle_{\mathcal{L}}S_{\mathcal{L}}-
\frac{1}{24\pi^2}\abs{{\rm Ric\,}_{\mathcal{L}}}^2_{\mathcal{L}}+\frac{1}{96\pi^2}\abs{R^{T^{1,0}X}_{\mathcal{L}}}^2_{\mathcal{L}}\Bigr)(z,\theta),\  \forall(z,\theta)\in D.
\end{split}
\end{equation}
\end{thm}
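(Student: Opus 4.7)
The plan is to derive Theorem~\ref{t-gue160902I} as a purely formal rewriting of Theorem~\ref{t-gue160902}, using the dictionary provided by Lemma~\ref{l-gue160902}. Since Theorem~\ref{t-gue160902} already delivers closed formulas for $b_{B,0}(z,z)$, $b_{B,1}(z,z)$, $b_{B,2}(z,z)$ in terms of the K\"ahler data $\dot R^L$, $r$, $\hat r$, $\triangle_\omega$, $R^{\det}_{\Theta_U}$, ${\rm Ric}_\omega$, $R^{TU}_\omega$ on the base $U$ of a BRT chart $B=(D,(z,\theta),\varphi)$, the only substantive work is to verify that each of these objects coincides, under the canonical identification coming from the BRT coordinates, with the corresponding rigid CR object on $D\subset X$ appearing on the right-hand side of \eqref{e-gue160902VIII}--\eqref{e-gue160902X}.

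First I would set up the canonical bundle isomorphism. On a BRT trivialization, the correspondence
\[
Z_j=\frac{\pr}{\pr z_j}+i\frac{\pr\varphi}{\pr z_j}\frac{\pr}{\pr\theta}\;\longleftrightarrow\;\frac{\pr}{\pr z_j}
\]
identifies $T^{1,0}X|_D$ with $T^{1,0}U$. Using \eqref{e-gue160502b} and \eqref{e-can}, a short calculation yields $\omega_0|_D=-d\theta+i(\pr-\ddbar)\varphi$, so that $-\frac{1}{2\pi}d\omega_0|_D$ corresponds to $\omega=\frac{i}{2\pi}R^L$ under the identification above. Consequently the rigid Hermitian metric $\langle\,\cdot\,|\,\cdot\,\rangle_\mathcal{L}$ of \eqref{e-gue160529} pulls over to $\langle\,\cdot\,,\cdot\,\rangle_\omega$ on $T^{1,0}U$. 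From this the first identity $\det\dot R^L(z)=\det\dot{\mathcal{R}}(z,\theta)$ of Lemma~\ref{l-gue160902} follows immediately by comparing \eqref{s1-e3} and \eqref{E:1.5.15m}.

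Next I would verify the remaining identifications in Lemma~\ref{l-gue160902} term by term. The key point is that $\varphi$ is independent of $\theta$, so every quantity built from $\omega$ on $U$ is automatically $T$-invariant when viewed on $D$, and the CR-side constructions of Section~\ref{s-gue160527a} reduce in BRT coordinates to exactly the local K\"ahler formulas on $U$. For example, from \eqref{e-gue160531} and \eqref{s1-e10} one gets that $a(z,\theta)$ and $V_\omega(z)$ differ only by a constant normalization involving the $d\theta$ factor of $-\omega_0$, so $\log a$ and $\log V_\omega$ differ by a constant, giving $r=S_\mathcal{L}$; the analogous computation with $\Theta_U$ gives $\hat r=S^\Theta_\mathcal{L}$. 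The Chern connection \eqref{e-gue160602I} and its curvature \eqref{s1-e13} are literally the K\"ahler analogs on $U$ under $Z_j\leftrightarrow\pr/\pr z_j$, yielding $R^{T^{1,0}X}_\mathcal{L}\leftrightarrow R^{TU}_\omega$, from which the determinant-bundle curvature, the Ricci curvature, and their various contractions match in pairs. The Laplacian identity $\triangle_\mathcal{L}=\triangle_\omega$ on $T$-invariant functions is immediate from comparing \eqref{e-gue160531II} with \eqref{s1-e9}.

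Once Lemma~\ref{l-gue160902} is established, Theorem~\ref{t-gue160902I} follows by straightforward substitution of each CR-side quantity for its K\"ahler counterpart in the formulas \eqref{e-gue160902V}, \eqref{e-gue160902VI}, \eqref{e-gue160902VII}. The main (and only) obstacle I foresee is careful bookkeeping of the $2\pi$-normalizations: one must check that the discrepancy between the full CR volume form $\frac{1}{n!}(-\frac{1}{2\pi}d\omega_0)^n\wedge(-\omega_0)$ on $D$ and the K\"ahler volume form on $U$ contributes only a multiplicative constant, so that it drops out of every $\log$-Laplacian in which the relevant curvatures are defined. Since both sides of Lemma~\ref{l-gue160902} are normalized compatibly with $\langle T|T\rangle_\mathcal{L}=1$, no spurious constants survive, and the substitution into \eqref{e-gue160902V}--\eqref{e-gue160902VII} produces \eqref{e-gue160902VIII}--\eqref{e-gue160902X} verbatim.
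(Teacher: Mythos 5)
Your proposal is correct and follows exactly the paper's route: Theorem~\ref{t-gue160902I} is obtained by substituting the dictionary of Lemma~\ref{l-gue160902} into the formulas of Theorem~\ref{t-gue160902}. The paper states this deduction in one line and omits the verification of Lemma~\ref{l-gue160902} as a "straightforward calculation," which your sketch (identification $Z_j\leftrightarrow\partial/\partial z_j$, $\omega_0=-d\theta+i(\partial-\bar\partial)\varphi$, $\theta$-independence of $\varphi$) correctly fills in.
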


\subsection{Approximate Szeg\H{o} kernels}\label{s-gue160903}

Assume that $X=D_1\bigcup D_2\bigcup\cdots\bigcup D_N$, where $B_j:=(D_j,(z,\theta),\varphi_j)$ is a
BRT trivialization, for each $j$. We may assume that for each $j$, $
D_j=U_j\times]-4\delta,4\delta[\subset\mathbb{C}^n\times
\mathbb{R}$, $U_j=\left\{z\in\mathbb{C}
^n;\, \left\vert z\right\vert<\gamma_j\right\}$ and $\delta>0$ is a small constant satisfying
\begin{equation}  \label{e-gue160903}
0<\delta<\inf\left\{\frac{\pi}{p_t},\left\vert \frac{2\pi}{p_r}-\frac{2\pi}{
p_{r+1}}\right\vert, r=1,\ldots,t-1\right\}.
\end{equation}
For each $j$, put $\hat
D_j=\hat U_j\times]-\frac{\delta}{2},\frac{\delta}{2}[$, where
$\hat U_j=\left\{z\in\mathbb{C}^n;\, \left\vert z\right\vert<\frac{\gamma_j}{
2}\right\}$. We may suppose that $X=\hat D_1\bigcup\hat
D_2\bigcup\cdots\bigcup\hat D_N$. Let $\chi_j\in C^\infty_0(\hat D_j)$, $
j=1,2,\ldots,N$, with $\sum^N_{j=1}\chi_j=1$ on $X$. Fix $j=1,2,\ldots,N$ and choose $\sigma_j\in C^\infty_0(]-\frac{\delta}{2}
,\frac{\delta}{2}[)$ with $\int\sigma_j(\theta)d\theta=1$. Let $
P_{B_j,m}(z,w)\in C^\infty(U_j\times U_j)$ be as in Theorem~\ref
{t-gue160901}. Put
\begin{equation}  \label{e-gue150627f}
\begin{split}
H_{j,m}(x,y)=\chi_j(x)e^{im\theta}P_{B_j,m}(z,w)e^{-im\eta}\sigma_j(\eta),
\end{split}
\end{equation}
where $x=(z,\theta)$, $y=(w,\eta)\in\mathbb{C}^{n}\times\mathbb{R}$. Let $
H_{j,m}$ be the continuous operator
\begin{equation}  \label{e-gue150627fI}
\begin{split}
H_{j,m}:C^\infty(X)&\rightarrow C^\infty(X), \\
u&\rightarrow\int
\chi_j(x)e^{im\theta}P_{B_j,m}(z,w)e^{-im\eta}\sigma_j(\eta)u(y)dv_X(y).
\end{split}
\end{equation}
Since $P_{B_j,m}$ is properly supported, \eqref{e-gue150627fI} is well-defined.



Let $Q_m:L^2(X)\To L^2_m(X)$ be the orthogonal projection with respect to $(\,\cdot\,|\,\cdot\,)$.
Consider
\begin{equation}  \label{e-gue150627fII}
\Gamma_m:=\sum^N_{j=1}H_{j,m}\circ
Q_m:C^\infty(X)\rightarrow C^\infty(X)
\end{equation}
and let $\Gamma_m(x,y)\in C^\infty(X\times X)$ be the distribution
kernel of $\Gamma_m$. Let $\Psi_{B_j}(z,w)$, $b_{B_j}(z,w,m)$, $b_{B_j,k}(z,w)$, $k=0,1,2,\ldots$, be as in Theorem~\ref{t-gue160901}. Let $
\hat\sigma_j\in C^\infty_0(]-\delta,\delta[)$ such that $
\hat\sigma_j=1$ on some neighbourhood of $\mathrm{Supp\,}\sigma_j$ and $
\hat\sigma_j(\theta)=1$ if $(z,\theta)\in\mathrm{Supp\,}\chi_j$. Put
\begin{equation}  \label{e-gue150901}
\begin{split}
&\hat\Psi_{B_j}(x,y)=\hat\sigma_j(\theta)\Psi_{B_j}(z,w)\hat\sigma_j(\eta)\in
C^\infty(D_j\times D_j),\ \ x=(z,\theta),\ \ y=(w,\eta), \\
&\hat b_{B_j,m}(x,y)=\chi_j(x)e^{im\theta}b_{B_j,m}(t,z,w)e^{-im\eta}\sigma_j(\eta), \\
&\hat b_{B_j,k}(x,y)=\chi_j(x)e^{im\theta}b_{B_j,k}(z,w)e^{-im\eta}\sigma_j(\eta),\ \ k=0,1,2,\ldots.
\end{split}
\end{equation}
It is not difficult to check that
\begin{equation}  \label{e-gue150626fIII}
\begin{split}
\Gamma_m(x,y)&=\frac{1}{2\pi}\sum^N_{j=1}\int^\pi_{-\pi}H_{j,m}(x,e^{iu}\circ
y)e^{imu}du \\
&=\frac{1}{2\pi}\sum^N_{j=1}\int^\pi_{-\pi}e^{im\hat
\Psi_{B_j}(x,e^{iu}\circ y)}\hat b_{B_j,m}(x,e^{iu}\circ y)e^{imu}du.
\end{split}
\end{equation}

\begin{lem} \label{l-gue150626f}
We have $\Gamma_mS_m=S_m$ on $C^\infty(X)$.
\end{lem}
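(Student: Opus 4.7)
\textbf{Proof proposal for Lemma~\ref{l-gue150626f}.}
The plan is to prove the stronger pointwise identity
\[
\sum_{j=1}^N H_{j,m}\,u = u \qquad \text{for every } u\in\ker\Box_{b,m},
\]
and then observe that the lemma follows immediately: for $f\in C^\infty(X)$ we have $S_m f\in H^0_{b,m}(X)\subset L^2_m(X)$, so $Q_m S_m f=S_m f$ and therefore $\Gamma_m S_m f=\sum_j H_{j,m}S_m f=\bigl(\sum_j\chi_j\bigr)S_m f=S_m f$. Note that the subellipticity of $\Box_{b,m}$ on the strongly pseudoconvex CR manifold $X$, combined with $Tu=imu$, ensures $u\in C^\infty_m(X)$, so there is no issue applying the smooth kernel $H_{j,m}(x,y)$ to $u$.

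The core reduction is to a local reproducing identity on each BRT chart. Fix $j$ and write, on $D_j$, $u(z,\theta)=e^{im\theta}\tilde u(z)$ with $\tilde u\in C^\infty(U_j,L^m)$ (via the frame of $L^m$ induced by the BRT trivialization). By Lemma~\ref{l-gue160830},
\[
\Box_{B_j,m}\bigl(e^{m\varphi_j}\tilde u\bigr) = e^{m\varphi_j}\cdot e^{-im\theta}\,\Box_{b,m}u = 0.
\]
Applying the reproducing property \eqref{e-gue160902I} to $v:=e^{m\varphi_j}\tilde u$ gives $e^{m\varphi_j}P_{B_j,m}(e^{-m\varphi_j}v)=v$, and the weights $e^{\pm m\varphi_j}$ cancel to leave the clean identity
\[
P_{B_j,m}\tilde u=\tilde u, \qquad\text{i.e.,}\qquad \int_{U_j} P_{B_j,m}(z,w)\,\tilde u(w)\,dv_{U_j}(w)=\tilde u(z).
\]

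With this in hand, the identity $H_{j,m}u=\chi_j u$ is obtained by a direct integration against the kernel \eqref{e-gue150627f}. Because $\mathrm{supp}\,\sigma_j\subset\bigl(-\tfrac{\delta}{2},\tfrac{\delta}{2}\bigr)$ and $P_{B_j,m}$ is properly supported on $U_j\times U_j$, the $y$-support of $H_{j,m}(x,y)$ is contained in $D_j$, so we may split $dv_X(y)=dv_{U_j}(w)\,d\eta$ and use $u(w,\eta)=e^{im\eta}\tilde u(w)$ on $D_j$; the factors $e^{-im\eta}$ and $e^{im\eta}$ cancel, so
\[
(H_{j,m}u)(x)=\chi_j(x)\,e^{im\theta}\Bigl(\int\sigma_j(\eta)\,d\eta\Bigr)\int_{U_j}P_{B_j,m}(z,w)\,\tilde u(w)\,dv_{U_j}(w)=\chi_j(x)\,u(x),
\]
using $\int\sigma_j=1$ and the reproducing identity above. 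Summing over $j$ and using $\sum_j\chi_j\equiv 1$ on $X$ completes the proof. The only real subtlety is the bookkeeping that moves between the CR picture on $X$ and the weighted holomorphic picture on $U_j$ via conjugation by $e^{m\varphi_j}$, along with verifying that the nested supports of $\chi_j$, $\sigma_j$, and $P_{B_j,m}$ justify the separation of variables; no analytic estimate is required.
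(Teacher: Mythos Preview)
Your proof is correct and follows essentially the same approach as the paper: write $u=S_m f\in\ker\Box_{b,m}\cap C^\infty_m(X)$, use the BRT decomposition $u=e^{im\theta}\tilde u(z)$ on each $D_j$, apply Lemma~\ref{l-gue160830} and the reproducing property \eqref{e-gue160902I} to get $P_{B_j,m}\tilde u=\tilde u$, and then integrate the kernel of $H_{j,m}$ to obtain $H_{j,m}u=\chi_j u$. The only cosmetic difference is that you record $Q_mS_mf=S_mf$ at the outset rather than carrying $Q_m u$ through the computation.
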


\begin{proof}
Let $f\in C^\infty(X)$ be a function and set $S_mf=u$. Then one has $u\in{\rm Ker\,}\Box_{b,m}\bigcap C^\infty_m(X)$. On $D_j$, set $
Q_mu=e^{im\eta}v_j(w)$, $v_j(w)\in C^\infty(U_j)$. From Lemma~\ref{l-gue160830}, we see that $\Box_{B_j,m}(e^{m\varphi_j}v_j)=0$ holds. From this observation and \eqref{e-gue160902I}, we have
\begin{equation}\label{e-gue160904}
P_{B_j,m}v_j=v_j\ \ \mbox{on $U_j$}.
\end{equation}
From \eqref{e-gue160904}, \eqref{e-gue150627fI} and since $P_{B_j,m}$ is properly supported, we have
\begin{equation*}
\begin{split}
H_{j,m}Q_mu&=\int\chi_j(x)e^{im
\theta}P_{B_j,m}(z,w)e^{-im\eta}\sigma_j(\eta)e^{im
\eta}v_j(w)dv_{U_j}(w)d\eta \\
&=\int\chi_j(x)e^{im
\theta}P_{B_j,m}(z,w)v_j(w)dv_{U_j}(w)=\chi_j(x)e^{im\theta}v_j(z)=\chi_ju.
\end{split}
\end{equation*}
Thus, $\Gamma_mu=\sum^N_{j=1}H_{j,m}
\circ Q_mu=\sum^N_{j=1}\chi_ju=u$.
The lemma follows.
\end{proof}

\begin{lem} \label{l-gue160904a}
We have $\Gamma_m\Box_b=O(m^{-\infty})$ on $X\times X$.
\end{lem}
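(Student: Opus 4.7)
The plan is to show $H_{j,m}Q_m\Box_b = O(m^{-\infty})$ on $X\times X$ for each $j=1,\ldots,N$ and then sum. The main tool is the commutation $[T,\Box_b]=0$ together with the local identification of $\Box_{b,m}$ and the Bergman Laplacian $\Box_{B_j,m}$ on a BRT chart (Lemma~\ref{l-gue160830}), after which \eqref{e-gue160902} does the work.

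Concretely, fix $j$ and $f \in C^\infty(X)$. Since $T\ddbar_b=\ddbar_bT$ and the rigid metric gives $T\ddbar^*_b=\ddbar^*_bT$, we have $T\Box_b=\Box_b T$, so the Fourier projector $Q_m$ commutes with $\Box_b$ and $Q_m\Box_b f = \Box_b Q_mf = \Box_{b,m} v$ where $v := Q_mf \in C^\infty_m(X)$. On $D_j$ write $v(z,\theta) = e^{im\theta}\widetilde v_j(z)$ with $\widetilde v_j \in C^\infty(U_j)$. By Lemma~\ref{l-gue160830},
\begin{equation*}
(\Box_{b,m} v)(z,\theta) \;=\; e^{im\theta}\,e^{-m\varphi_j(z)}\,\Box_{B_j,m}\bigl(e^{m\varphi_j}\widetilde v_j\bigr)(z)\quad\text{on }D_j.
\end{equation*}
Plugging this into the definition \eqref{e-gue150627f} of $H_{j,m}$ and carrying out the $\eta$-integral (which produces $\int\sigma_j(\eta)\,d\eta=1$) yields
\begin{equation*}
(H_{j,m}Q_m\Box_b f)(x) \;=\; \chi_j(x)\,e^{im\theta}\,\bigl(P_{B_j,m}\circ e^{-m\varphi_j}\circ \Box_{B_j,m}\circ e^{m\varphi_j}\bigr)\widetilde v_j(z).
\end{equation*}
By \eqref{e-gue160902}, the bracketed composition $A_{j,m}:=P_{B_j,m}\circ e^{-m\varphi_j}\circ \Box_{B_j,m}\circ e^{m\varphi_j}$ is $O(m^{-\infty})$ on $U_j\times U_j$; equivalently (as noted after \eqref{s1-e1su}), for every $N$ and $s$ the operator norm of $A_{j,m}$ from $H^{-s}_{\mathrm{comp}}(U_j)$ to $H^s_{\mathrm{loc}}(U_j)$ is bounded by $C_{N,s}m^{-N}$.

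To globalize, it remains to bound $\widetilde v_j = (Q_mf)(\,\cdot\,,0)$ in Sobolev norms on $U_j$ by Sobolev norms of $f$ with at most polynomial growth in $m$; this is straightforward because $Q_m$ is $L^2$-bounded uniformly in $m$, and derivatives in the $T$-direction simply multiply $v$ by $im$, so only polynomial factors in $m$ appear. Multiplying such a polynomial by $C_{N,s}m^{-N}$ for arbitrary $N$ produces $O(m^{-\infty})$; hence the smooth kernel of $H_{j,m}Q_m\Box_b$ has all derivatives $O(m^{-\infty})$ uniformly on $X\times X$. Summing over $j$ gives $\Gamma_m\Box_b=O(m^{-\infty})$ on $X\times X$. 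The main technical point to verify is precisely the step of converting the local symbolic statement for $A_{j,m}$ on $U_j\times U_j$ into a uniform kernel estimate for $\chi_j(x)e^{im\theta}(A_{j,m}\widetilde v_j)(z)$ on $X\times X$, but the cutoffs $\chi_j,\sigma_j$ together with the proper support of $P_{B_j,m}$ confine all relevant computations to the BRT charts and make the book-keeping routine.
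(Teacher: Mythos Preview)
Your proposal is correct and follows essentially the same route as the paper: commute $Q_m$ past $\Box_b$, use Lemma~\ref{l-gue160830} to rewrite $\Box_{b,m}$ on the BRT chart as the conjugated Kodaira Laplacian, and then invoke \eqref{e-gue160902}. The only presentational difference is in the last step: the paper simply observes that the distribution kernel of $H_{j,m}Q_m\Box_b$ equals $(K_m\circ Q_m)(x,y)$ with $K_m(x,y)=\chi_j(x)e^{im\theta}A_{j,m}(z,w)e^{-im\eta}\sigma_j(\eta)\in C^\infty_0(D_j\times D_j)$, and since $A_{j,m}(z,w)=O(m^{-\infty})$ one reads off $K_m=O(m^{-\infty})$ and hence $K_m\circ Q_m=O(m^{-\infty})$ directly---no Sobolev bookkeeping on $\widetilde v_j$ is needed. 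Your operator-norm detour works too, but the explicit kernel identification is cleaner.
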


\begin{proof}
Let $f\in C^\infty(X)$ be a function. Fix $j=1,2,\ldots,N$. On $D_j$, set $
Q_mf=e^{im\eta}v_j(w)$, $v_j(w)\in C^\infty(U_j)$. From \eqref{e-gue150627fI} and Lemma~\ref{l-gue160830}, we have
\begin{equation}\label{e-gue160904aI}
\begin{split}
&H_{j,m}Q_m\Box_bf=H_{j,m}\Box_{b,m}Q_mf\\
&=\int\chi_j(x)e^{im\theta}P_{B_j,m}(z,w)e^{-im\eta}\sigma_j(\eta)(\Box_{b,m}Q_mf)(w,\eta)dv_{U_j}(w)d\eta \\
&=\int\chi_j(x)e^{im
\theta}P_{B_j,m}(z,w)\sigma_j(\eta)e^{-m\varphi_j(w)}\Box_{B_j,m}(e^{m\varphi_j}v_j)(w)dv_{U_j}(w)d\eta\\
&=\int\chi_j(x)e^{im
\theta}P_{B_j,m}(z,w)e^{-m\varphi_j(w)}\Box_{B_j,m}(e^{m\varphi_j}v_j)(w)dv_{U_j}(w)\\
&=\int\chi_j(x)e^{im
\theta}(P_{B_j,m}\circ e^{-m\varphi_j}\circ\Box_{B_j,m}\circ e^{m\varphi_j})(z,w)v_j(w)dv_{U_j}(w)\\
&=\int\chi_j(x)e^{im
\theta}(P_{B_j,m}\circ e^{-m\varphi_j}\circ\Box_{B_j,m}\circ e^{m\varphi_j})(z,w)e^{-im\eta}\sigma_j(\eta)(Q_mf)dv_{U_j}(w)d\eta\\
&=\int K_m(x,y)(Q_mf)dv_{U_j}(w)d\eta,
\end{split}
\end{equation}
where $K_m(x,y):=\chi_j(x)e^{im
\theta}(P_{B_j,m}\circ e^{-m\varphi_j}\circ\Box_{B_j,m}\circ e^{m\varphi_j})(z,w)e^{-im\eta}\sigma_j(\eta)$. In view of \eqref{e-gue160902}, we know that
\begin{equation}\label{e-gue160904b}
K_m(x,y)=O(m^{-\infty})\ \ \mbox{on $D_j\times D_j$}.
\end{equation}
Note that $K_m(x,y)\in C^\infty_0(D_j\times D_j)$.
From \eqref{e-gue160904aI}, we see that the distribution kernel of $H_{j,m}Q_m\Box_b$ is given by $(K_m\circ Q_m)(x,y)$. From \eqref{e-gue160904b}, it is easy to see that
$(K_m\circ Q_m)(x,y)=O(m^{-\infty})$ on $D_j\times D_j$. Thus, we have $H_{j,m}Q_m\Box_b=O(m^{-\infty})$ on $D_j\times D_j$ and hence  $\Gamma_m\Box_b=O(m^{-\infty})$ on $X\times X$.
The lemma follows.
\end{proof}

Let $\Gamma^*_m:C^\infty(X)\To C^\infty_m(X)$ be the adjoint of $\Gamma_m$ with respect to $(\,\cdot\,|\,\cdot\,)$. From Lemma~\ref{l-gue150626f} and Lemma~\ref{l-gue160904a}, we deduce

\begin{thm}\label{t-gue160904f}
With the notations above, we have
\begin{equation}\label{e-gue160904w}
S_m\Gamma^*_m=S_m\ \ \mbox{on $C^\infty(X)$}
\end{equation}
and
\begin{equation}\label{e-gue160904wI}
\Box_b\Gamma^*_m=O(m^{-\infty})\ \ \mbox{on $X\times X$}.
\end{equation}
\end{thm}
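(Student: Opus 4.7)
The plan is to obtain both identities as direct consequences of Lemma~\ref{l-gue150626f} and Lemma~\ref{l-gue160904a} by passing to formal adjoints with respect to the global $L^2$ inner product $(\,\cdot\,|\,\cdot\,)$ on $X$. The two self-adjointness facts needed are: first, the Szeg\H o projector $S_m$ is the orthogonal projection onto $H^0_{b,m}(X)$ and therefore $S_m^{*}=S_m$; second, $\Box_b=\ddbar^{*}_b\ddbar_b$ is formally self-adjoint on $C^\infty(X)$, so $\Box_b^{*}=\Box_b$. In kernel language, passage to the $L^2$ adjoint sends an operator with smooth kernel $A(x,y)$ to the one with kernel $\overline{A(y,x)}$, an operation which obviously preserves smoothness and the estimates defining $O(m^{-\infty})$ on $X\times X$.

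For the first identity, start from the operator equation $\Gamma_m S_m=S_m$ on $C^\infty(X)$ supplied by Lemma~\ref{l-gue150626f}. Applying the formal adjoint and using $(AB)^{*}=B^{*}A^{*}$ yields $S_m^{*}\Gamma_m^{*}=S_m^{*}$, which combined with $S_m^{*}=S_m$ gives $S_m\Gamma_m^{*}=S_m$ on $C^\infty(X)$, as desired.

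For the second identity, begin with $\Gamma_m\Box_b=O(m^{-\infty})$ on $X\times X$ from Lemma~\ref{l-gue160904a}. Taking adjoints yields $(\Gamma_m\Box_b)^{*}=\Box_b^{*}\Gamma_m^{*}=\Box_b\Gamma_m^{*}$, and since passage to the adjoint preserves the $O(m^{-\infty})$ condition on $X\times X$, we conclude $\Box_b\Gamma_m^{*}=O(m^{-\infty})$ on $X\times X$. The argument is entirely formal, so no genuine obstacle appears; the only bookkeeping points are that $\Gamma_m$ is a continuous operator $C^\infty(X)\to C^\infty(X)$ with smooth distribution kernel (visible from the integral representation~\eqref{e-gue150626fIII}), and that the projections $S_m$ and $Q_m$ entering the construction are bounded and self-adjoint on $L^2(X)$, so that composition and adjoining commute in the standard way.
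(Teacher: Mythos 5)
Your proposal is correct and is exactly the argument the paper intends: the paper states only that Theorem~\ref{t-gue160904f} is ``deduced'' from Lemma~\ref{l-gue150626f} and Lemma~\ref{l-gue160904a}, and the deduction is precisely the adjoint-passing argument you give, using $S_m^*=S_m$, $\Box_b^*=\Box_b$, and the fact that the map $A(x,y)\mapsto\overline{A(y,x)}$ preserves the $O(m^{-\infty})$ condition.
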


Now we can prove

\begin{thm}\label{t-gue160904fI}
With the notations used above, we have $\Gamma_m=S_m+O(m^{-\infty})$ on $X\times X$.
\end{thm}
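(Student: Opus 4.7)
The plan is to combine the partial inverse identity \(N_m\Box_{b,m} + S_m = I\) from \eqref{e-gue160827a} with the two properties of \(\Gamma_m^*\) provided by Theorem~\ref{t-gue160904f}. It is more convenient to work with \(\Gamma_m^*\) and deduce the statement for \(\Gamma_m\) at the end by taking adjoints, because the estimates in Theorem~\ref{t-gue160904f} are stated on the right of \(\Gamma_m^*\) (where \(\Box_b\) and \(S_m\) sit), which matches the natural side where the partial inverse identity applies.

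For any \(f \in C^\infty(X)\), the function \(\Gamma_m^* f\) lies in \(C^\infty_m(X) \cap {\rm Dom}\,\Box_{b,m}\), so applying \(S_m + N_m\Box_{b,m} = I\) and using \(\Box_{b,m} = \Box_b\) on \(C^\infty_m(X)\) together with \eqref{e-gue160904w} gives
\[
\Gamma_m^* f = S_m\Gamma_m^* f + N_m\Box_b\Gamma_m^* f = S_m f + N_m\Box_b\Gamma_m^* f,
\]
so that \(\Gamma_m^* - S_m = N_m\Box_b\Gamma_m^*\) as operators on \(C^\infty(X)\). By \eqref{e-gue160904wI} the kernel \(K_m(x,y)\) of \(\Box_b\Gamma_m^*\) is \(O(m^{-\infty})\) on \(X \times X\), so the remaining task is to show that pre-composition with \(N_m\) preserves this property.

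For this I would fix \(y\) in a coordinate patch and observe that, for every multi-index \(\beta\), the slice \(\partial^\beta_y K_m(\cdot, y)\) belongs to \(C^\infty_m(X)\) with every Sobolev norm \(O(m^{-\infty})\) uniformly for \(y\) in compact sets. Lemma~\ref{l-gue160325I} then gives
\[
\|N_m\partial^\beta_y K_m(\cdot,y)\|_{s+2} \leq \hat C_s m^{\hat N_s}\|\partial^\beta_y K_m(\cdot,y)\|_s = O(m^{-\infty})
\]
for every \(s \in \mathbb{N}_0\); Sobolev embedding converts this into uniform \(C^k\) bounds on the kernel in the \(x\)-variable, and since \(N_m\) acts only in \(x\) the \(y\)-derivatives commute through, yielding full joint \((x,y)\) estimates. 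Hence \(\Gamma_m^* = S_m + O(m^{-\infty})\) on \(X \times X\). Taking adjoints and using \(S_m^* = S_m\) then yields the desired identity \(\Gamma_m = S_m + O(m^{-\infty})\) on \(X \times X\).

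The only real obstacle is the bookkeeping in the last step: one must verify that Lemma~\ref{l-gue160325I} can be iterated with arbitrarily large \(s\) and combined with \(y\)-derivatives of \(K_m\) without sacrificing the \(O(m^{-\infty})\) decay, since the constants \(\hat C_s\) and \(\hat N_s\) grow with \(s\). This is harmless because \(K_m\) decays faster than any polynomial in \(m\), but it is the one place where care is needed.
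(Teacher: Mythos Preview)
Your proposal is correct and follows essentially the same approach as the paper: both arguments write $\Gamma_m^* = S_m + N_m\Box_b\Gamma_m^*$ via the partial inverse identity \eqref{e-gue160827a} and Theorem~\ref{t-gue160904f}, then use \eqref{e-gue160904wI} together with Lemma~\ref{l-gue160325I} to show $N_m\Box_b\Gamma_m^* = O(m^{-\infty})$, and conclude by taking adjoints. The only stylistic difference is that the paper invokes the equivalence ``$F_m = O(m^{-\infty})$ on $X\times X$ iff $F_m = O(m^{-N}): H^{-s}(X)\to H^s(X)$ for all $N,s$'' directly at the operator level, whereas you verify the kernel estimates slice-by-slice in $y$ and appeal to Sobolev embedding; these are equivalent unpackings of the same bound.
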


\begin{proof}
From \eqref{e-gue160827a}, we have
\begin{equation}\label{e-gue160904t}
N_m\Box_bQ_m+S_mQ_m=Q_m\ \ \mbox{on $C^\infty(X)$}.
\end{equation}
From \eqref{e-gue160904t} and \eqref{e-gue160904w}, we have
\begin{equation}\label{e-gue160905}
\begin{split}
\Gamma^*_m&=Q_m\Gamma^*_m=(N_m\Box_bQ_m+S_m)\Gamma^*_m\\
&=N_m\Box_bQ_m\Gamma^*_m+S_m\Gamma^*_m\\
&=N_m\Box_b\Gamma^*_m+S_m.
\end{split}
\end{equation}
From \eqref{e-gue160904wI} and \eqref{e-gue160325pII}, we conclude that
\[N_m\Box_b\Gamma^*_m=O(m^{-N}):H^{-s}(X)\To H^s(X)\]
holds for every $s\in\mathbb N$ and $N>0$. Hence, we find $N_m\Box_b\Gamma^*_m=O(m^{-\infty})$ on $X\times X$. From this identity and \eqref{e-gue160905}, we deduce $\Gamma^*_m=S_m+O(m^{-\infty})$ on $X\times X$. Since $S_m$ is self-adjoint, we conclude $\Gamma_m=S_m+O(m^{-\infty})$ on $X\times X$. The theorem follows.
\end{proof}



\subsection{The proof of Theorem~\ref{t-gue160605} and Theorem~\ref{t-gue160605I}}\label{s-gue160905}

We will use the same notations as in  Theorem~\ref{t-gue160605} and Theorem~\ref{t-gue160605I}.
Take
\begin{equation}  \label{e-gue160327}
0<\delta<\inf\left\{\frac{\pi}{p_t},\left\vert \frac{2\pi}{p_r}-\frac{2\pi}{
p_{r+1}}\right\vert, r=1,\ldots,t-1\right\}.
\end{equation}
For $x\in X$ and every $r=1,2,\ldots,t$, set
\begin{equation}\label{e-gue160929}
\hat d_\delta(x,X^r_{\mathrm{sing\,}}):=\inf\left\{d(x,e^{-i\theta}x);\,
\delta\leq\theta\leq\frac{2\pi}{p_r}-\delta\right\}.
\end{equation}
Note that for any $0<\delta, \delta_1$ satisfying \eqref{e-gue160327}, $\hat
d_\delta(x,X^r_{\mathrm{sing\,}})$ and $\hat d_{\delta_1}(x,X^r_{\mathrm{sing\,%
}})$ are equivalent. We shall denote $\hat d(x,X^r_{\mathrm{sing\,}}):=\hat
d_\delta(x,X^r_{\mathrm{sing\,}})$. The following is well-known (see Theorem 6.5 in~\cite{CHT})

\begin{thm}\label{t-gue160910c}
There is a constant $C\geq1$ such that
\begin{equation*}
\frac{1}{C}d(x,X^r_{\mathrm{sing\,}})\leq\hat d(x,X^r_{\mathrm{sing\,}})\leq
Cd(x,X^r_{\mathrm{sing\,}}),\ \ \forall x\in X.
\end{equation*}
\end{thm}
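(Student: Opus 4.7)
First, I would establish the upper bound $\hat{d}(x, X^r_{\mathrm{sing\,}}) \leq 2\, d(x, X^r_{\mathrm{sing\,}})$. Since the chosen Hermitian metric on $\Complex TX$ is rigid, the induced $S^1$ action is by Riemannian isometries. Given $x \in X$, choose $y \in X^r_{\mathrm{sing\,}}$ realizing $d(x,y) = d(x, X^r_{\mathrm{sing\,}})$ (the infimum is attained since $X^r_{\mathrm{sing\,}}$ is closed in the compact $X$); say $y \in X_{p_j}$ for some $j > r$, and set $\theta_0 := 2\pi/p_j$. The two constraints $\delta < \pi/p_t$ and $\delta < 2\pi/p_r - 2\pi/p_{r+1}$ from \eqref{e-gue160327} give $\delta < 2\pi/p_j$ and $2\pi/p_j \leq 2\pi/p_{r+1} \leq 2\pi/p_r - \delta$, so $\theta_0 \in [\delta, 2\pi/p_r - \delta]$. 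Since $y$ has period $2\pi/p_j$, $e^{-i\theta_0}y = y$, and the triangle inequality together with the isometry property of $e^{-i\theta_0}$ yields
\[ d(x, e^{-i\theta_0}x) \leq d(x,y) + d(y, e^{-i\theta_0}y) + d(e^{-i\theta_0}y, e^{-i\theta_0}x) = 2\, d(x,y), \]
and taking the infimum over $y$ gives the claimed bound.

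For the reverse inequality $d(x, X^r_{\mathrm{sing\,}}) \leq C\, \hat{d}(x, X^r_{\mathrm{sing\,}})$, I would argue by contradiction, combining compactness with a local linearization. If no such $C$ exists, one can pick $x_n \in X$ with $d(x_n, X^r_{\mathrm{sing\,}}) > n\, \hat{d}(x_n, X^r_{\mathrm{sing\,}})$ and $\theta_n \in [\delta, 2\pi/p_r - \delta]$ with $d(x_n, e^{-i\theta_n}x_n) \leq 2\, \hat{d}(x_n, X^r_{\mathrm{sing\,}})$. After passing to subsequences, $x_n \to x_\infty$ and $\theta_n \to \theta_\infty$. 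If $d(x_n, X^r_{\mathrm{sing\,}})$ stayed bounded away from zero, then $\hat{d}(x_n, X^r_{\mathrm{sing\,}}) \leq \mathrm{diam}(X)/n \to 0$, so passing to the limit gives $e^{-i\theta_\infty}x_\infty = x_\infty$. The value $\theta_\infty \in (0, 2\pi/p_r)$ must be a positive integer multiple of the period of $x_\infty$, forcing that period to be strictly less than $2\pi/p_r$, so $x_\infty \in X^r_{\mathrm{sing\,}}$ and therefore $d(x_n, X^r_{\mathrm{sing\,}}) \to 0$, a contradiction. Hence necessarily $d(x_n, X^r_{\mathrm{sing\,}}) \to 0$ and $x_\infty \in X^r_{\mathrm{sing\,}}$; say $x_\infty \in X_{p_j}$ with $p_j > p_r$.

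The crux is then upgrading this qualitative convergence to the desired linear bound, and I expect this to be the main technical obstacle. Using the slice theorem for the locally free $S^1$-action (equivalently, a tube description built from BRT trivializations along the orbit $S^1 \cdot x_\infty$), a neighborhood of $x_\infty$ in $X$ is equivariantly modeled by $(S^1 \times V)/(\mathbb{Z}/p_j)$, where $\mathbb{Z}/p_j$ is the isotropy group of $x_\infty$ and acts on the slice $V$ through the isotropy representation $\rho$. As already noted, $e^{-i\theta_\infty}$ lies in this finite isotropy group, and by the very argument above its order $q$ in $\mathbb{Z}/p_j$ satisfies $q > p_r$. A direct computation in the local model shows
\[ d(x, e^{-i\theta_n}x_n) \geq c_0\, d(v_n, F) - o(1), \]
where $v_n \in V$ is the slice coordinate of $x_n$, $F := \ker(\rho(e^{-i\theta_\infty}) - I)$, and $c_0 > 0$ depends only on the nontrivial eigenvalues of $\rho(e^{-i\theta_\infty})$. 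Because every $v \in F$ gives rise to a point whose isotropy contains the order-$q$ subgroup generated by $e^{-i\theta_\infty}$ (with $q > p_r$), that point lies in $X^r_{\mathrm{sing\,}}$, so $d(x_n, X^r_{\mathrm{sing\,}}) \leq C_1\, d(v_n, F)$ for $x_n$ near $x_\infty$. Combining the two inequalities contradicts $d(x_n, X^r_{\mathrm{sing\,}}) > n\, \hat{d}(x_n, X^r_{\mathrm{sing\,}})$ for large $n$. A finite covering of $X^r_{\mathrm{sing\,}}$ (which is compact) by such model neighborhoods yields a uniform constant $C$. The delicate bookkeeping lies in verifying that the fixed subspace $F$ used at each center is genuinely contained in $X^r_{\mathrm{sing\,}}$ and not in some larger fixed set containing regular-period points; this is guaranteed precisely by the bound $\theta_\infty < 2\pi/p_r$ coming from the admissible range of $\theta$.
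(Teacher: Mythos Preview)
The paper does not give its own proof of this statement; it cites it as Theorem~6.5 in \cite{CHT} and calls it ``well-known''. So there is no proof in the paper to compare against, and the question reduces to whether your argument stands on its own.

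Your upper bound is clean and correct: the rigidity of the metric does imply that the $S^1$ action is isometric, and your check that $\theta_0 = 2\pi/p_j$ lies in $[\delta, 2\pi/p_r - \delta]$ uses exactly the two inequalities in \eqref{e-gue160327}. For the lower bound, the contradiction/slice-theorem strategy is standard and sound, and your key observation---that any point fixed by $e^{-i\theta_\infty}$ with $\theta_\infty \in (0, 2\pi/p_r)$ must lie in $X^r_{\mathrm{sing}}$---is precisely what makes it work. A few places where the writing is looser than the mathematics: (i) the displayed inequality with ``$-\,o(1)$'' is not the right shape. Once you observe that for large $n$ the minimizing deck transformation $\zeta$ in the tube model is forced to be $e^{i\theta_\infty}$ (any other $\zeta$ contributes an $S^1$-distance bounded below by a positive constant), you get directly
\[
d(x_n, e^{-i\theta_n}x_n) \;\geq\; c\,\bigl|(\rho(e^{-i\theta_\infty})-I)v_n\bigr| \;\geq\; c\,c_0\, d(v_n, F),
\]
with no additive error term. (ii) The closing sentence about a finite cover of $X^r_{\mathrm{sing}}$ yielding a uniform $C$ is redundant inside a contradiction argument---you only need to contradict the assumed inequality along a single subsequence, which you already do. (iii) Minor typo: $d(x, e^{-i\theta_n}x_n)$ should read $d(x_n, e^{-i\theta_n}x_n)$. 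None of these is a genuine gap.
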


Let
\begin{equation}\label{e-gue161009}
\begin{split}
\Gamma^0_m(x,y)&=\frac{1}{2\pi}\sum^N_{j=1}\int^{2\delta}_{-2\delta}H_{j,m}(x,e^{iu}\circ
y)e^{imu}du \\
&=\frac{1}{2\pi}\sum^N_{j=1}\int^{2\delta}_0e^{im\hat
\Psi_{B_j}(x,e^{iu}\circ y)}\hat b_{B_j,m}(x,e^{iu}\circ y)e^{imu}du.
\end{split}
\end{equation}
It is easy to check that
\begin{equation}  \label{e-gue160224I}
\begin{split}
&\Gamma^0_m(x,x)\sim
m^{n}b_0(x)+m^{n-1}b_1(x)+m^{n-2}b_2(x)+\cdots\ \ \mbox{in $S^n_{{\rm loc\,}}(1,X)$},
\\
&b_k(x)=\frac{1}{2\pi}\sum^N_{j=1}\chi_j(x)b_{B_j,k}(z,z)\in C^\infty(X),\ \ k=0,1,2,\ldots,
\end{split}
\end{equation}
and
\begin{equation} \label{e-gue160905c}
b_{0}(x)=(2\pi)^{-n-1}\det\dot{\mathcal{R}}(x),
\end{equation}
\begin{equation} \label{e-gue160905cI}
b_{1}(x)=(2\pi)^{-n-1}\det\dot{\mathcal{R}}(x)\Bigr(\frac{1}{4\pi}S^\Theta_{\mathcal{L}}-\frac{1}{8\pi}S_{\mathcal{L}}\Bigr)(x),
\end{equation}
\begin{equation} \label{e-gue160906cII}
\begin{split}
b_{2}(x)&=(2\pi)^{-n-1}\det\dot{\mathcal{R}}(x)\Bigr(\frac{1}{128\pi^2}(S_{\mathcal{L}})^2-\frac{1}{32\pi^2}S_{\mathcal{L}}S^\Theta_{\mathcal{L}}+\frac{1}{32\pi^2}(S^\Theta_{\mathcal{L}})^2
-\frac{1}{32\pi^2}\triangle_{\mathcal{L}}S^\Theta_{\mathcal{L}}
-\frac{1}{8\pi^2}\abs{R^{\det}_\Theta}^2_{\mathcal{L}}\\
&\quad+\frac{1}{8\pi^2}\langle\,{\rm Ric\,}_{\mathcal{L}}\,|\,R^{\det}_\Theta\,\rangle_{\mathcal{L}}+\frac{1}{96\pi^2}\triangle_{\mathcal{L}}S_{\mathcal{L}}-
\frac{1}{24\pi^2}\abs{{\rm Ric\,}_{\mathcal{L}}}^2_{\mathcal{L}}+\frac{1}{96\pi^2}\abs{R^{T^{1,0}X}_{\mathcal{L}}}^2_{\mathcal{L}}\Bigr)(x).
\end{split}
\end{equation}

\begin{thm}\label{t-gue160905c}
With the notations used above, there is an $\varepsilon_0>0$ such that for any $r=1,\ldots,t$ and
any differential operator $P_\ell:C^\infty(X)\rightarrow
C^\infty(X)$ of order $\ell\in\mathbb{N}_0$, there is a  $C>0$ such that
\begin{equation}  \label{e-gue160910}
\begin{split}
&\abs{P_\ell\Bigr(\Gamma_m(x,x)-\sum
\limits^{p_r}_{s=1}e^{\frac{2\pi(s-1)}{p_r}mi}\Gamma^0_m(x,x)\Bigr)}\\
&\leq Cm^{n+\frac{\ell}{2}}e^{-\varepsilon_0md(x,X^r_{{\rm sing\,}})^2},\ \ \forall  x\in X_r.
\end{split}
\end{equation}
\end{thm}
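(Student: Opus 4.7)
The plan is to decompose the oscillatory integral defining $\Gamma_m(x,x)$ in \eqref{e-gue150626fIII} into $p_r$ arcs centered at the period points $u_s:=2\pi(s-1)/p_r$, $s=1,\ldots,p_r$, of $x\in X_{p_r}$, and to estimate the non-central part of each arc using the Gaussian off-diagonal decay of $P_{B_j,m}$ coming from \eqref{e-gue160902II}--\eqref{e-gue160902IV}. First I would partition $\Real/2\pi\mathbb Z$ as $J_s=u_s+(-\pi/p_r,\pi/p_r)$. On $J_s$, substituting $u=u_s+\tilde u$ and using $e^{iu_s}\circ x=x$ yields $e^{iu}\circ x=e^{i\tilde u}\circ x$ and $e^{imu}=e^{imu_s}e^{im\tilde u}$, so
\begin{equation*}
\Gamma_m(x,x)=\sum_{s=1}^{p_r}e^{imu_s}\cdot\frac{1}{2\pi}\sum_j\int_{-\pi/p_r}^{\pi/p_r}H_{j,m}(x,e^{i\tilde u}\circ x)e^{im\tilde u}\,d\tilde u.
\end{equation*}
Since $\pi/p_r>2\delta$ by \eqref{e-gue160327}, subtracting $\sum_{s=1}^{p_r}e^{imu_s}\Gamma_m^0(x,x)$ reduces the inner integration to the remainder region $\mathcal{V}_r:=\{\tilde u:2\delta\le|\tilde u|\le\pi/p_r\}$, and using $\bigl|\sum_{s=1}^{p_r}e^{imu_s}\bigr|\le p_r$ the problem reduces to controlling
\begin{equation*}
R_m(x):=\frac{1}{2\pi}\sum_j\int_{\mathcal{V}_r}H_{j,m}(x,e^{i\tilde u}\circ x)e^{im\tilde u}\,d\tilde u
\end{equation*}
and its $x$-derivatives up to order $\ell$.

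Next I would observe that whenever $H_{j,m}(x,e^{i\tilde u}\circ x)\ne0$, both $x$ and $e^{i\tilde u}\circ x$ lie in $D_j$ with BRT coordinates $(z_0,\theta_0)$ and $(w^*,\eta^*)$, and $\theta_0,\eta^*\in(-\delta/2,\delta/2)$. By the $S^1$-invariance built into BRT coordinates (in BRT the $S^1$-action acts as translation in $\theta$ and preserves transversals), the function $x\mapsto\theta(x)-\eta(e^{i\tilde u}\circ x)$ is locally constant in $x$, so in the product $e^{im\theta(x)}e^{-im\eta(e^{i\tilde u}\circ x)}$ all $O(m)$ contributions from $x$-differentiation cancel. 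Hence every $x$-derivative effectively lands on $P_{B_j,m}(z(x),w(e^{i\tilde u}\circ x))$, and combining \eqref{e-gue160902II}--\eqref{e-gue160902IV} with the elementary estimate $(m|t|)^{j}e^{-cmt^2}\le C_jm^{j/2}e^{-cmt^2/2}$, each derivative costs at most a factor $m^{1/2}$ at the expense of a mild weakening of the Gaussian constant. This yields, for $|\alpha|\le\ell$,
\begin{equation*}
|\pr_x^\alpha H_{j,m}(x,e^{i\tilde u}\circ x)|\le Cm^{n+|\alpha|/2}e^{-c'm|z_0-w^*|^2}.
\end{equation*}

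The crux is then the geometric lower bound $|z_0-w^*|\ge c_1 d(x,X^{r}_{{\rm sing\,}})$. Here I would note that $e^{-i\theta_0}\circ x$ and $e^{-i\eta^*}\circ(e^{i\tilde u}\circ x)=e^{i(\tilde u-\eta^*)}\circ x$ both sit on the transversal slice $\{\theta=0\}$ of $D_j$ with BRT $z$-coordinates exactly $z_0$ and $w^*$, so by smoothness of the chart their mutual manifold distance is at most $C|z_0-w^*|$. By $S^1$-invariance of $d(\cdot,\cdot)$ this distance equals $d(x,e^{i(\tilde u-\eta^*+\theta_0)}\circ x)$, and since $\tilde u\in\mathcal{V}_r$ and $|\eta^*|,|\theta_0|<\delta/2$ the shifted parameter $\tilde u-\eta^*+\theta_0$ lies in $[\delta,2\pi/p_r-\delta]\cup[-(2\pi/p_r-\delta),-\delta]$; therefore the distance is at least $\hat d_\delta(x,X^{r}_{{\rm sing\,}})\ge c_0 d(x,X^{r}_{{\rm sing\,}})$ by \eqref{e-gue160929} and Theorem~\ref{t-gue160910c}. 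Combining this with the pointwise bound above, integrating over the bounded set $\mathcal{V}_r$ and summing over the finitely many contributing indices $j$ delivers \eqref{e-gue160910}. The main obstacle is precisely this lower bound: when $x$ approaches a higher-period stratum the orbit of $x$ may re-enter a single chart $D_j$ along several nearby arcs, and obtaining $|z_0-w^*|\gtrsim d(x,X^{r}_{{\rm sing\,}})$ uniformly across all such arcs is exactly what the $S^1$-invariance of the rigid Hermitian metric together with the bi-Lipschitz property of BRT coordinates provide.
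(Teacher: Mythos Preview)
Your overall architecture matches the paper's: split the $u$–integral in \eqref{e-gue150626fIII} into $p_r$ arcs using $e^{iu_s}\circ x=x$, subtract the central pieces to recover $\Gamma^0_m$, and control the remaining arcs by the Gaussian decay \eqref{e-gue160902IV} together with the geometric lower bound $|z_0-w^*|\gtrsim \hat d(x,X^r_{\rm sing})$ from \eqref{e-gue160929} and Theorem~\ref{t-gue160910c}. For $\ell=0$ your argument is essentially the paper's proof.

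The gap is in your treatment of derivatives. The assertion that $x\mapsto\theta(x)-\eta(e^{i\tilde u}\circ x)$ is locally constant is false in general. In BRT coordinates the $S^1$–action is $\theta$–translation only for \emph{small} times; for a fixed large $\tilde u$ the diffeomorphism $e^{i\tilde u}$ does not take the slice $\{\theta=0\}$ to another constant–$\theta$ slice. Concretely, writing $e^{i\tilde u}\circ(z_0,\theta_0)=(w^*(z_0),\eta^*_0(z_0)+\theta_0)$ one sees that $\theta_0-\eta^*=-\eta^*_0(z_0)$ is constant in $\theta_0$ but depends on $z_0$; equivalently, since $-\omega_0=d\theta+\alpha_j$ on $D_j$ with $\alpha_j$ built from $\varphi_j$, one gets $d(\eta^*-\theta_0)=\alpha_j-(e^{i\tilde u})^*\alpha_j\neq 0$ in general. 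Hence a $z$–derivative of $e^{im\theta}e^{-im\eta^*}$ produces an honest $O(m)$ factor that your Gaussian cannot absorb to $m^{1/2}$ without further input.

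The paper does not rely on any such cancellation. It differentiates the full integrand, extracts the total phase contribution, and uses that $\Psi_{B_j}(z,z)=0$ forces $\partial_{x_k}\bigl[\hat\Psi_{B_j}(x,e^{iu}\circ x)\bigr]=O(|z_0-w^*|)$; this turns the dangerous $m$ into $m|z_0-w^*|$, which the Gaussian absorbs to $m^{1/2}$ (see \eqref{e-gue161010II}--\eqref{e-gue161010ry}). The $T$–derivative is handled separately by an interpolation: one has the crude bound $m^{n+1}e^{-\varepsilon md^2}$ from the integral, and also the uniform bound $O(m^n)$ coming from $T\Gamma_m(x,x)=TS_m(x)+O(m^{-\infty})=O(m^{-\infty})$ together with $|T\Gamma^0_m(x,x)|\le Cm^n$; taking the geometric mean yields $m^{n+1/2}e^{-\tilde\varepsilon md^2}$ (see \eqref{e-gue161010rya}--\eqref{e-gue161012I}). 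Your derivative step should be replaced by one of these two mechanisms.
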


\begin{proof}
We first prove \eqref{e-gue160910} when $P_\ell$ is the identity map.
Let $B_j:=(D_j,(z,\theta),\varphi_j)$, $\hat D_j$, $
j=1,\ldots,N$ be as in the beginning of Section~\ref{s-gue160903}. We will
use the same notations as in Section~\ref{s-gue160903}. For each $j$, we
have $D_j=U_j\times]-4\delta,4\delta[\subset\mathbb{C}^n\times
\mathbb{R}$, $\hat D_j=\hat U_j\times]-\frac{\delta}{2},\frac{
\delta}{2}[\subset\mathbb{C}^n\times\mathbb{R}$, $\delta>0$, $U_j=\left\{z\in\mathbb{C}^n;\, \left\vert
z\right\vert<\gamma_j\right\}$, $\hat U_j=\left\{z\in\mathbb{C}^n;\,
\left\vert z\right\vert<\frac{\gamma_j}{2}\right\}$.  Recall that $\delta>0$
satisfies \eqref{e-gue160327}. We may assume that $4\left\vert
\delta\right\vert<\frac{2\pi}{p}$. Recall that $X=\hat
D_1\bigcup\cdots\bigcup\hat D_N$. It is straightforward to see that there is
an $\hat\varepsilon_0>0$ such that for each $j=1,\ldots,N$, we have
\begin{equation}  \label{e-gue160215}
\begin{split}
&\hat\varepsilon_0d((z_1,\theta_1),(z_2,\theta_1))\leq\left\vert
z_1-z_2\right\vert\leq\frac{1}{\hat\varepsilon_0}d((z_1,\theta_1),(z_2,%
\theta_1)), \forall (z_1,\theta_1),(z_2,\theta_1)\in\hat D_j, \\
&\hat\varepsilon_0d((z_1,\theta_1),(z_2,\theta_1))^2\leq\Psi_{B_j}(z_1,z_2)\leq%
\frac{1}{\hat\varepsilon_0}d((z_1,\theta_1),(z_2,\theta_1))^2, \forall
(z_1,\theta_1),(z_2,\theta_1)\in\hat D_j,
\end{split}
\end{equation}
where $\Psi_{B_j}(z,w)$ is as in \eqref{e-gue160902II}.

Fix $x_0\in X_{p_r}$. Fix $j=1,2,\ldots,N$. Assume that $x_0\in\hat D_j$ and suppose
that $x_0=(z_0,\theta_0)\in\hat D_j$. We claim
\begin{equation}  \label{e-gue160327I}
\begin{split}
&\mbox{if $e^{i\theta}\circ x_0=(\Td z,\Td\eta)\in\hat D_j$, for some
$\theta\in[2\delta,\frac{2\pi}{p_r}-2\delta]$}, \\
&\mbox{then $\abs{z-\Td z}\geq\hat\varepsilon_0\hat d(x_0,X^r_{{\rm sing\,}})$}
\end{split}
\end{equation}
where $\hat d(x_0,X^r_{{\rm sing\,}})=\hat d_\delta(x_0,X^r_{{\rm sing\,}})$ is as in \eqref{e-gue160929}.

\begin{proof}[proof of the claim]
We have
\begin{equation}  \label{e-gue160909}
\begin{split}
\abs{z_0-\Td z}&=\abs{(z_0,\theta_0)-(\Td z,\theta_0)}=\abs{(z_0,\theta_0)-e^{-i\Td\eta+i\theta_0}(\Td z,\Td\eta)}\\
&=\abs{x_0-e^{i(-\Td\eta+\theta_0)}\circ e^{i\theta}\circ x_0}.
\end{split}
\end{equation}
Note that $\abs{-\Td\eta+\theta_0}\leq\delta$ holds. From this observation, \eqref{e-gue160909} and \eqref{e-gue160215} we have
\begin{equation}  \label{e-gue160327II}
\begin{split}
\left\vert \widetilde
z-z_0\right\vert&\geq\hat\varepsilon_0\inf\left\{d(e^{iu}\circ
e^{i\theta}\circ x_0,x_0);\, \left\vert u\right\vert\leq\delta\right\} \\
&\geq\hat\varepsilon_0\inf\left\{d(e^{i\theta}\circ x_0,x_0);\,
\delta\leq\theta\leq\frac{2\pi}{p_r}-\delta\right\}=\hat\varepsilon_0\hat
d(x_0,X^r_{\mathrm{sing\,}}).
\end{split}
\end{equation}
The claim follows.
\end{proof}

From \eqref{e-gue150626fIII}, we have
\begin{equation}  \label{e-gue160909I}
\begin{split}
&\Gamma_m(x_0,x_0) \\
&=\frac{1}{2\pi}\sum^N_{j=1}\int^{2\pi}_{0}e^{im\hat
\Psi_{B_j}(x_0,e^{iu}\circ x_0)}\hat b_{B_j,m}(x_0,e^{iu}\circ x_0)e^{imu}du\\
&=\frac{1}{2\pi}\sum\limits^{p_r}_{s=1}e^{\frac{2\pi(s-1)}{p_r}mi}\sum^N_{j=1}\int^{\frac{2\pi}{p_r}}_{0}e^{im\hat
\Psi_{B_j}(x_0,e^{iu}\circ x_0)}\hat b_{B_j,m}(x_0,e^{iu}\circ x_0)e^{imu}du\\
&=\frac{1}{2\pi}\sum\limits^{p_r}_{s=1}e^{\frac{2\pi(s-1)}{p_r}mi}\sum^N_{j=1}\int_{u\in[2\delta,\frac{2\pi}{p_r}-2\delta]}e^{im\hat
\Psi_{B_j}(x_0,e^{iu}\circ x_0)}\hat b_{B_j,m}(x_0,e^{iu}\circ x_0)e^{imu}du\\
&\quad+\frac{1}{2\pi}\sum\limits^{p_r}_{s=1}e^{\frac{2\pi(s-1)}{p_r}mi}\sum^N_{j=1}\int^{2\delta}_{-2\delta}e^{im\hat
\Psi_{B_j}(x_0,e^{iu}\circ x_0)}\hat b_{B_j,m}(x_0,e^{iu}\circ x_0)e^{imu}du\\
&=\frac{1}{2\pi}\sum\limits^{p_r}_{s=1}e^{\frac{2\pi(s-1)}{p_r}mi}\sum^N_{j=1}\int_{u\in[2\delta,\frac{2\pi}{p_r}-2\delta]}e^{im\hat
\Psi_{B_j}(x_0,e^{iu}\circ x_0)}\hat b_{B_j,m}(x_0,e^{iu}\circ x_0)e^{imu}du\\
&\quad+\sum
\limits^{p_r}_{s=1}e^{\frac{2\pi(s-1)}{p_r}mi}\Gamma^0_m(x_0,x_0).
\end{split}
\end{equation}
From \eqref{e-gue160215}, \eqref{e-gue160327I} and Theorem~\ref{t-gue160910c}, there are $\varepsilon_0>0
$ and $C_0>0$ independent of $k$ and $x_0$ such that
\begin{equation}  \label{e-gue160125V}
\begin{split}
&\abs{\frac{1}{2\pi}\sum\limits^{p_r}_{s=1}e^{\frac{2\pi(s-1)}{p_r}mi}\sum^N_{j=1}\int_{u\in[2\delta,\frac{2\pi}{p_r}-2\delta]}e^{im\hat
\Psi_{B_j}(x_0,e^{iu}\circ x_0)}\hat b_{B_j,m}(x_0,e^{iu}\circ x_0)e^{imu}du}\\
&\leq C_0m^ne^{
-\varepsilon_0md(x_0,X^r_{{\rm sing\,}})^2},\ \ \forall m>0.
\end{split}
\end{equation}
From \eqref{e-gue160125V} and \eqref{e-gue160909I}, we get the theorem when $P_\ell$ is the identity map.

Now, let's consider general $P_\ell$.  For simplicity, we only prove \eqref{e-gue160910} when $\ell=1$. For general $P_\ell$, the proof is similar. Fix $x_0\in X_{p_r}$. Fix $j=1,2,\ldots,N$. Assume that $x_0\in\hat D_j$ and suppose that $x_0=(z_0,\theta_0)\in\hat D_j$. Let $x=(z,\theta)$ be the canonical coordinates of $X$ on $\hat D_j$, $z_k=x_{2k-1}+ix_{2k}$, $j=1,\ldots,n$, $x_{2n}=\theta$.
For every $k=1,2,\ldots,2n$, we have
\begin{equation}\label{e-gue161010I}
\frac{\pr}{\pr x_k}\Bigr(\int^{2\pi}_{0}e^{im\hat
\Psi_{B_j}(x,e^{iu}\circ x)}\hat b_{B_j,m}(x,e^{iu}\circ x)e^{imu}du\Bigr)=\int^{2\pi}_{0}e^{im\hat
\Psi_{B_j}(x,e^{iu}\circ x)}\Td b^k_{B_j,m}(x,e^{iu}\circ x)e^{imu}du,
\end{equation}
where
\begin{equation}\label{e-gue161010II}
\Td b^k_{B_j,m}(x,e^{iu}\circ x)=im\Bigr(\frac{\pr}{\pr x_k}\hat
\Psi_{B_j}(x,e^{iu}\circ x)\Bigr)\hat b_{B_j,m}(x,e^{iu}\circ x)+\frac{\pr}{\pr x_k}\hat b_{B_j,m}(x,e^{iu}\circ x).
\end{equation}
We can repeat the procedure \eqref{e-gue160909I} with minor change and get that
\begin{equation}  \label{e-gue161010III}
\begin{split}
&\frac{\pr}{\pr x_k}\Bigr(\int^{2\pi}_{0}e^{im\hat
\Psi_{B_j}(x,e^{iu}\circ x)}\hat b_{B_j,m}(x,e^{iu}\circ x)e^{imu}du\Bigr)|_{x=x_0} \\
&=\sum\limits^{p_r}_{s=1}e^{\frac{2\pi(s-1)}{p_r}mi}\int_{u\in[2\delta,\frac{2\pi}{p_r}-2\delta]}e^{im\hat
\Psi_{B_j}(x_0,e^{iu}\circ x_0)}\Td b^k_{B_j,m}(x_0,e^{iu}\circ x_0)e^{imu}du\\
&\quad+\sum
\limits^{p_r}_{s=1}e^{\frac{2\pi(s-1)}{p_r}mi}\frac{\pr}{\pr x_k}\Bigr(\int^{2\delta}_{-2\delta}e^{im\hat
\Psi_{B_j}(x,e^{iu}\circ x)}\hat b_{B_j,m}(x,e^{iu}\circ x)e^{imu}du\Bigr)|_{x=x_0}.
\end{split}
\end{equation}
From the fact that $\Psi_{B_j}((z,\theta),(z,\eta))=0$ and $\frac{\pr}{\pr x_k}\hat
\Psi_{B_j}(x,e^{iu}\circ x)=O(\abs{z-z'})$, where $x=(z,\theta)\in\hat D_j$, $e^{iu}\circ x=(z',\theta')\in\hat D_j$, and by using the claim \eqref{e-gue160327I}, we conclude that for every $u\in[2\delta,\frac{2\pi}{p_r}-2\delta]$, we have
\begin{equation}\label{e-gue161010ry}
\abs{\Td b^k_{B_j,m}(x_0,e^{iu}\circ x_0)}\leq Cm^{n}\Bigr(md(x_0,X^r_{{\rm sing\,}})+1\Bigr),
\end{equation}
where $C>0$ is a constant independent of $x_0$ and $u$. From \eqref{e-gue161010III} and \eqref{e-gue161010ry}, we deduce that
\begin{equation}  \label{e-gue161010ryI}
\begin{split}
&\Bigr|\frac{\pr}{\pr x_k}\Bigr(\int^{2\pi}_{0}e^{im\hat
\Psi_{B_j}(x,e^{iu}\circ x)}\hat b_{B_j,m}(x,e^{iu}\circ x)e^{imu}du\Bigr)|_{x=x_0}\\
&\quad-\sum
\limits^{p_r}_{s=1}e^{\frac{2\pi(s-1)}{p_r}mi}\frac{\pr}{\pr x_k}\Bigr(\int^{2\delta}_{-2\delta}e^{im\hat
\Psi_{B_j}(x,e^{iu}\circ x)}\hat b_{B_j,m}(x,e^{iu}\circ x)e^{imu}du\Bigr)|_{x=x_0}\Bigr|\\
&\leq C_1m^{n+\frac{1}{2}}e^{-\varepsilon_0md(x_0,X^r_{{\rm sing\,}})^2},
\end{split}
\end{equation}
where $C_1>0$ and $\varepsilon_0>0$ are constants independent of $x_0$ and $\varepsilon_0>0$ is independent of the derivative $\frac{\pr}{\pr x_k}$. On $\hat D_j$, write
\[P_\ell=\sum^{2n}_{k=1}a_k(x)\frac{\pr}{\pr x_k}+a_{2n+1}(x)T,\ \ a_k\in C^\infty(\hat D_j),\ \ k=1,2,\ldots,2n+1.\]
Let $P^0_\ell=\sum^{2n}_{k=1}a_k(x)\frac{\pr}{\pr x_k}$. From \eqref{e-gue161010ryI}, we deduce that
\begin{equation}  \label{e-gue161010ryII}
\begin{split}
&\Bigr|P^0_\ell\Bigr(\int^{2\pi}_{0}e^{im\hat
\Psi_{B_j}(x,e^{iu}\circ x)}\hat b_{B_j,m}(x,e^{iu}\circ x)e^{imu}du\\
&\quad-\sum
\limits^{p_r}_{s=1}e^{\frac{2\pi(s-1)}{p_r}mi}\int^{2\delta}_{-2\delta}e^{im\hat
\Psi_{B_j}(x,e^{iu}\circ x)}\hat b_{B_j,m}(x,e^{iu}\circ x)e^{imu}du\Bigr)|_{x=x_0}\Bigr|\\
&\leq C_2\Bigr(\sum^{2n}_{k=1}\Bigr|\frac{\pr}{\pr x_k}\Bigr(\int^{2\pi}_{0}e^{im\hat
\Psi_{B_j}(x,e^{iu}\circ x)}\hat b_{B_j,m}(x,e^{iu}\circ x)e^{imu}du\Bigr)\Bigr|_{x=x_0}\\
&\quad-\sum
\limits^{p_r}_{s=1}e^{\frac{2\pi(s-1)}{p_r}mi}\frac{\pr}{\pr x_k}\Bigr(\int^{2\delta}_{-2\delta}e^{im\hat
\Psi_{B_j}(x,e^{iu}\circ x)}\hat b_{B_j,m}(x,e^{iu}\circ x)e^{imu}du\Bigr)|_{x=x_0}\Bigr|\Bigr)\\
&\leq C_3m^{n+\frac{1}{2}}e^{-\varepsilon_1md(x_0,X^r_{{\rm sing\,}})^2},
\end{split}
\end{equation}
where $C_2>0$, $C_3>0$ and $\varepsilon_1>0$ are constants independent of $x_0$ and $\varepsilon_1>0$ is independent of $P^0_\ell$.  From \eqref{e-gue161010ryII}, we obtain
\begin{equation}  \label{e-gue161010ryIII}
\begin{split}
&\Bigr|P_\ell\Bigr(\Gamma_m(x,x)-\sum
\limits^{p_r}_{s=1}e^{\frac{2\pi(s-1)}{p_r}mi}\Gamma^0(x,x)\Bigr)|_{x=x_0}\Bigr|\\
&\leq\hat C_0\Bigr(m^{n+\frac{1}{2}}e^{-\varepsilon_2md(x_0,X^r_{{\rm sing\,}})^2}+\Bigr|T\Bigr(\Gamma_m(x,x)-\sum
\limits^{p_r}_{s=1}e^{\frac{2\pi(s-1)}{p_r}mi}\Gamma^0(x,x)\Bigr)|_{x=x_0}\Bigr|\Bigr),
\end{split}
\end{equation}
where $\hat C_0>0$ and $\varepsilon_2>0$ are constants independent of $x_0$ and $\varepsilon_2>0$ is independent of $P_\ell$. Let us estimate $\Bigr|T\Bigr(\Gamma_m(x,x)-\sum
\limits^{p_r}_{s=1}e^{\frac{2\pi(s-1)}{p_r}mi}\Gamma^0(x,x)\Bigr)|_{x=x_0}\Bigr|\Bigr)$. We can repeat the procedure \eqref{e-gue160909I} with minor change and deduce that\begin{equation}  \label{e-gue161010rya}
\begin{split}
\Bigr|T\Bigr(\Gamma_m(x,x)-\sum
\limits^{p_r}_{s=1}e^{\frac{2\pi(s-1)}{p_r}mi}\Gamma^0(x,x)\Bigr)|_{x=x_0}\Bigr|\leq\hat C_1m^{n+1}e^{-\varepsilon md(x_0,X^r_{{\rm sing\,}})^2},
\end{split}
\end{equation}
where $\hat C_1>0$ and $\varepsilon>0$ are constants independent of $x_0$  and $\varepsilon>0$ is independent of  the derivative $T$. Since $\Gamma_m(x,x)=S_m(x)+O(m^{-\infty})$ and $TS_m(x)=0$, we deduce that $T\Gamma_m(x,x)=O(m^{-\infty})$. Moreover, from \eqref{e-gue160224I}, we find $\abs{T\Gamma^0_m(x,x)}\leq\hat Cm^n$, for every $x\in X$, where $\hat C>0$ is a constant. From this observation, we conclude
\begin{equation}  \label{e-gue161012}
\begin{split}
\Bigr|T\Bigr(\Gamma_m(x,x)-\sum
\limits^{p_r}_{s=1}e^{\frac{2\pi(s-1)}{p_r}mi}\Gamma^0(x,x)\Bigr)|_{x=x_0}\Bigr|\leq\Td C_1m^n,
\end{split}
\end{equation}
where $\Td C_1>0$ is a constant independent of $x_0$. From \eqref{e-gue161010rya} and \eqref{e-gue161012}, we get
\begin{equation}  \label{e-gue161012I}
\begin{split}
\Bigr|T\Bigr(\Gamma_m(x,x)-\sum
\limits^{p_r}_{s=1}e^{\frac{2\pi(s-1)}{p_r}mi}\Gamma^0(x,x)\Bigr)|_{x=x_0}\Bigr|\leq\Td C_2m^{n+\frac{1}{2}}e^{-\Td\varepsilon md(x_0,X^r_{{\rm sing\,}})^2},
\end{split}
\end{equation}
where $\Td C_2>0$ and $\Td\varepsilon>0$ are constants independent of $x_0$ and $\Td\varepsilon>0$ is independent of the derivative $T$ . From \eqref{e-gue161010ryIII} and \eqref{e-gue161012I}, we get \eqref{e-gue160910} when $\ell=1$. For general $P_\ell$, the proof is similar.
\end{proof}

From Theorem~\ref{t-gue160904fI}, Theorem~\ref{t-gue160905c}, \eqref{e-gue160224I}, \eqref{e-gue160905c}, \eqref{e-gue160905cI} and \eqref{e-gue160906cII}, we get Theorem~\ref{t-gue160605} and Theorem~\ref{t-gue160605I}.
\begin{center}
{\bf Acknowledgement}
\end{center}

 The authors thank the referees for careful reading the manuscript and many useful suggestion which improve the presentation of this work.
\bibliographystyle{plain}

\end{document}